\def\Xint#1{\mathchoice
	{\XXint\displaystyle\textstyle{#1}}%
	{\XXint\textstyle\scriptstyle{#1}}%
	{\XXint\scriptstyle\scriptscriptstyle{#1}}%
	{\XXint\scriptscriptstyle\scriptscriptstyle{#1}}%
	\!\int}
\def\XXint#1#2#3{{\setbox0=\hbox{$#1{#2#3}{\int}$}
		\vcenter{\hbox{$#2#3$}}\kern-.5\wd0}}
\newcommand{\norm}[1]{{\Vert #1\Vert}}
\newcommand{\abs}[1]{{\left\vert #1\right\vert}}
\newcommand{\R}{{\mathbb R}}
\newcommand{\ep}{\varepsilon}
\newcommand{\lt}{\left}
\newcommand{\rt}{\right}
\newcommand{\na}{\nabla}
\newcommand{\nn}{\nonumber}
\newcommand{\one}{{\mathds{1}}}
\newcommand{\e}{\varepsilon}
\DeclareMathOperator{\dv}{div}
\DeclareMathOperator{\supp}{supp}
\DeclareMathOperator{\dist}{dist}
\newcommand{\loc}{{\rm loc}}
\newcommand{\chara}{1\!\!1}
\newcommand{\nl}{\newline}
\newcommand{\lm}{\lambda}
\newcommand{\qd}{\quad}
\newcommand{\wt}{\widetilde}
\newcommand{\PPI}{\mathcal{P}}
\newcommand{\OI}{\mathcal{O}}
\newcommand{\B}{\mathcal{B}}
\newcommand{\ca}[1]{\mathrm{Card}\lt(#1\rt)}
\newcommand{\ti}{\tilde}
\newtheorem{thm}{Theorem}
\newtheorem{prop}[thm]{Proposition}
\newtheorem{lem}[thm]{Lemma}
\theoremstyle{definition}
\newtheorem{rem}[thm]{Remark}
\title{Rigidity of a non-elliptic differential inclusion related to the Aviles-Giga conjecture}
\date{}
\author{Xavier Lamy\footnote{Institut de Math\'ematiques de Toulouse, UMR 5219, Universit\'e de Toulouse, CNRS, UPS
IMT, F-31062 Toulouse Cedex 9, France. Email: Xavier.Lamy@math.univ-toulouse.fr}
\and Andrew Lorent\footnote{Department of Mathematical Sciences, University of Cincinnati, Cincinnati, OH 45221, USA. Email: lorentaw@uc.edu} 
\and Guanying Peng\footnote{Department of Mathematics, University of Arizona, Tucson, AZ 85721, USA. Email: gypeng@math.arizona.edu}}
\begin{document}

\maketitle
\begin{abstract}
In this paper we prove sharp regularity for a differential inclusion into a set $K\subset\R^{2\times 2}$ that arises in connection with the Aviles-Giga functional.
The set $K$ is not elliptic, and in that sense our main result goes beyond \v{S}ver\'{a}k's regularity theorem on elliptic differential inclusions. It can also be reformulated as a sharp regularity result for a critical nonlinear Beltrami equation.
In terms of the Aviles-Giga energy, our main result implies that zero energy states  coincide (modulo a canonical transformation) with solutions of the differential inclusion into $K$. This opens new perspectives towards understanding energy concentration properties for Aviles-Giga: quantitative estimates for the stability of zero energy states can now be approached from the point of view of stability estimates for differential inclusions.
All these reformulations of our results are strong improvements of a recent work by the last two authors Lorent and Peng, where the link between the differential inclusion into $K$ and the Aviles-Giga functional was first observed and used.
Our proof relies moreover on new observations concerning the algebraic structure of entropies.
\end{abstract}

\section{Introduction} The Aviles-Giga functional for $u\in W^{2,2}(\Omega)$ over a bounded domain $\Omega\subset\R^2$ is given by
\begin{equation*}
I_{\ep}(u)=\int_{\Omega}\lt(\ep \lt|\na^2 u\rt|^2  +\frac{\lt(1-\lt|\na u\rt|^2\rt)^2}{\ep}\rt)\; dx.
\end{equation*}
Here $\nabla^2 u$ is the Hessian matrix of the scalar-valued function $u$ and $\ep>0$ is a small parameter. This is a second order functional that (subject to appropriate boundary conditions) models phenomena from thin film blistering to smectic liquid crystals, and is also the most natural higher order generalization of the Cahn-Hilliard functional. The Aviles-Giga conjecture for the $\Gamma$-limit of $I_{\ep}$ is one of the central conjectures in the theory of $\Gamma$-convergence and has attracted a great deal of attention; see for example  \cite{avgig0,avgig1,ADM,mul2,ottodel1}. One of the main theorems in the theory of the Aviles-Giga functional is the characterization of 
\enquote{zero energy states} of the functional by Jabin,  Otto and Perthame \cite{otto}. A zero energy state is a function $u$ that is a strong limit of a sequence $u_\ep$ with $I_{\ep}(u_\ep)\rightarrow 0$ as $\ep\rightarrow 0$. Clearly $u$ satisfies the Eikonal equation given by
\begin{equation}\label{eq10}
\lt|\na u\rt|=1 \qd\text{a.e.}
\end{equation}
A formulation of the Jabin-Otto-Perthame theorem involves the notion of \emph{entropies}, which is a central tool for the analysis of the Aviles-Giga functional. Non-technically speaking, entropies are smooth vector fields $\Phi:\R^2\rightarrow\R^2$ such that $\dv\Phi(\nabla^{\perp} u)=0$ if $u$ is a smooth solution to the Eikonal equation. 
For  weak solutions $u=\lim_{\ep\rightarrow 0} u_\e$ with $\sup_{\ep}I_{\ep}(u_\ep)<\infty$, $\dv\Phi(\nabla^{\perp} u)$ are measures, called \emph{entropy measures}, that detect the jump in $\nabla u$  (see \eqref{eq14} and \eqref{eq15} for a detailed definition of entropies). The Jabin-Otto-Perthame theorem states that
if $u$ is a solution to the Eikonal equation and if for every entropy  $\Phi$ the function $u$ satisfies $\dv\Phi(\na^{\perp} u)=0$  
distributionally in $\Omega$, then $\na u$ is  smooth outside a locally finite set. Indeed in any convex neighborhood $U$ of a singular point $x_0$  the vector field $\na^{\perp} u$ forms a vortex around $x_0$ in $U$. 

Recently the second two authors provided a generalization of this result in \cite{LP}: the same conclusion holds under the weaker assumption that  $m=\nabla^\perp u$ satisfies
\begin{equation}
\label{eqintroa1}
\dv\Sigma_j(m)=0\text{ distributionally in }\Omega\text{ for }j=1,2,
\end{equation}
where $\Sigma_1, \Sigma_2 \in C^{\infty}(\R^2;\R^2)$ are the entropies introduced by Jin and Kohn \cite{kohn} (see \eqref{eqx10} in Section \ref{background} below) and further used by Ambrosio, De Lellis  and Mantegazza \cite{ADM} to formulate a $\Gamma$-limit conjecture for the Aviles-Giga functional. 
A necessary condition for their conjecture to hold 
 is that the Jin-Kohn entropy productions $\dv\Sigma_j(\na^{\perp} u)$, if they are measures, control all other entropy productions $\dv\Phi(\na^{\perp} u)$. Hence the main result in \cite{LP} shows this in the particular case when the Jin-Kohn entropy productions are zero.
 A key new perspective in \cite{LP} is to associate to a function $u$ satisfying $\dv\Sigma_j(\na^{\perp} u)=0$  and $\abs{\nabla u}=1$ a mapping $F:\Omega\rightarrow\R^2$ that satisfies a differential inclusion into a set $K$ (see \eqref{eq2}) determined by the two Jin-Kohn entropies $\Sigma_j$. 
  The main result in \cite{LP} shows regularity for any $F$ satisfying the differential inclusion $DF\in K$, provided $F$ was originally associated to a function $u$ as above. This is the case if $F$ already has some regularity, e.g. $F\in W^{2,1}$ \cite[Theorem~5]{LP}.
Our aim in the present work is to prove a more natural regularity result for the differential inclusion $DF\in K$ (regardless of whether $F$ was originally associated to a function $u$), removing this extra regularity assumption.

Note that the Eikonal equation \eqref{eq10} can be equivalently formulated as 
\begin{equation}\label{eq11}
|m|=1\text{ a.e.}, \qd\dv m=0
\end{equation}
by identifying $m=\nabla^{\perp} u$. 
 In this setting the main result of \cite{LP} shows regularity of $m$ satisfying \eqref{eq11} and \eqref{eqintroa1}. There is a  correspondence between Lipschitz maps $F$ satisfying the above-mentioned differential inclusion $DF\in K$ a.e., and unit vector fields $m$ satisfying \eqref{eqintroa1} but \emph{not necessarily divergence free}. Hence proving a natural regularity result for the differential inclusion into $K$ amounts to generalizing the main result in \cite{LP} by removing the assumption that $\dv m=0$. This is one of the formulations of our main results: if a vector field $m\colon\Omega\to\mathbb S^1$ satisfies \eqref{eqintroa1}, then once again the regularity and rigidity of zero energy states are valid (see Theorem~\ref{TT0}).

Formulated in terms of differential inclusions (see Theorem~\ref{TT1}), this constitutes a sharp regularity result for the differential inclusion into $K$, compared to the corresponding one in \cite{LP}. The set $K$ is \em not  elliptic \rm in the sense of \v{S}ver\'{a}k \cite{sverak} and DiPerna \cite{dp}. As such our Theorem \ref{TT1} is (to our knowledge) the first regularity/rigidity result for non-elliptic differential inclusions and opens the possibility of regularity results for differential inclusions under much more general hypotheses than those of \cite{sverak}.  

However our principal aim is the study of the Aviles-Giga conjecture. As will be explained, we envision our main result as a technical tool in the study of the   energy concentration. Specifically we are interested to attack this problem by establishing quantitative stability estimates for the differential inclusion into $K$. The first step in such a program is to establish rigidity of the differential inclusion into $K$ itself,  and is the purpose of the present work.

\subsection{Statement of the main results}
\label{background}

To state our main result, let us first introduce the Jin-Kohn entropies $\Sigma_1,\Sigma_2\in C^{\infty}(\R^2;\R^2)$. For $v\in\R^2$, the two entropies are given by
\begin{equation}
\label{eqx10}
\begin{aligned}
\Sigma_1(v)&=\left( v_2\lt(1-v_1^2-\frac{v_2^2}{3}\rt) ,  v_1\lt(1-v_2^2-\frac{v_1^2}{3}\rt) \right),\\
\Sigma_2(v)&=\lt( -v_1\lt(1-\frac{2 v_1^2}{3}\rt),  v_2\lt(1-\frac{2 v_2^2}{3}\rt)  \rt).
\end{aligned}
\end{equation}
 Note that $\Sigma_2(v) = R_{\frac\pi 4}\Sigma_1( R_{-\frac\pi 4}v)$, where $R_{\theta}$ denotes the rotation of angle $\theta$.
As mentioned earlier, our main result can be stated either in terms of unit vector fields $m$ that are \emph{not  necessarily divergence free}, or in terms of differential inclusions (see also Theorem~\ref{TT2} below in terms of nonlinear Beltrami equations). We first adopt the unit vector field point of view:
\begin{thm}\label{TT0}
Let $\Omega\subset\R^2$ be an open set and $m\colon \Omega\to\mathbb R^2$ satisfy
\begin{equation}
\label{eq12}
\abs{m}=1\quad\text{a.e.},\qquad \dv\Sigma_j(m)=0\quad\text{in }\mathcal D'(\Omega)\quad\text{for }j=1,2.
\end{equation}
Then $m$ is locally  Lipschitz outside a locally finite set of points $S$. 
Moreover, for any singular point $\zeta\in S$ there exists $\alpha\in \lbrace \pm 1\rbrace$ such that in  any convex neighborhood $ \OI\subset\subset\Omega$ of $\zeta$,
\begin{equation*}
m(x)=\alpha i\frac{x-\zeta}{\abs{x-\zeta}}\quad\text{for all }x\in\OI,
\end{equation*}
where $i\in \mathbb C$ is identified with the  counterclockwise rotation of angle $\frac\pi 2$ in $\R^2$.
\end{thm}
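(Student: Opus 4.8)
The plan is to deduce Theorem~\ref{TT0} from a regularity and rigidity statement for the differential inclusion $DF\in K$ (i.e.\ Theorem~\ref{TT1}), which in turn I would prove by combining the critical nonlinear Beltrami equation obeyed by $F$ with an algebraic identity satisfied by the two Jin--Kohn entropies.

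\emph{Step 1: reduction to the differential inclusion.} Since $\abs m=1$ a.e., the vector fields $\Sigma_1(m),\Sigma_2(m)$ lie in $L^\infty_\loc(\Omega;\R^2)$, and by \eqref{eq12} they are divergence free, so on any simply connected subdomain there exist $\psi_1,\psi_2\in W^{1,\infty}_\loc$ with $\na^\perp\psi_j=\Sigma_j(m)$. From the explicit formulas \eqref{eqx10} one checks that the Lipschitz map $F:=(\psi_1,\psi_2)$ satisfies $DF(x)\in K$ for a.e.\ $x$, where $K\subset\R^{2\times2}$ is a fixed compact curve whose points are parametrized by the value $m(x)\in\mathbb S^1$; conversely $m$ is an explicit continuous function of $DF$ on $K$. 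It thus suffices to show that a Lipschitz $F$ with $DF\in K$ a.e.\ is locally affine away from a locally finite set, the vortex maps being the only nontrivial local profiles, and then to translate back through this correspondence.

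\emph{Step 2: continuity via the critical Beltrami equation --- the main obstacle.} Identifying $F$ with a complex function of $z=x_1+ix_2$, the inclusion $DF\in K$ is equivalent to a first order relation of nonlinear Beltrami type $\partial_{\bar z}F=\mathcal H(z,\partial_z F)$, which is elliptic with ellipticity constant $k(m(x))<1$ but \emph{degenerates} ($k\to 1$) as $m$ approaches finitely many \enquote{bad} unit vectors; this is exactly the failure of $K$ to be elliptic in the sense of \v{S}ver\'ak \cite{sverak} and DiPerna \cite{dp}, so that neither \v{S}ver\'ak's rigidity theorem nor the usual subcritical Beltrami theory applies. The difficulty, and the heart of the argument, is to show that this criticality is harmless. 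Away from the relatively closed set $S$ on which $m$ approaches a bad direction, the theory of Beltrami equations yields continuity of $F$ together with a Stoilow-type factorization $F=H\circ\chi$ with $\chi$ a homeomorphism; to rule out genuine singularities along $S$ and to obtain that $S$ is discrete, I would use a blow-up combined with a compensated-compactness argument built on \eqref{eq12}, which is where the new algebraic observations on entropies enter. The upshot is that $m$ is continuous on $\Omega\setminus S$ with $S$ relatively closed with empty interior, and moreover $m\in W^{1,p}_\loc(\Omega\setminus S)$ for some $p>1$ (equivalently $F\in W^{2,p}_\loc$ there).

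\emph{Step 3: the algebraic identity and rigidity.} On $\Omega\setminus S$ write $m=(\cos\theta,\sin\theta)$ with $\theta$ locally in $W^{1,p}$. Using \eqref{eqx10} and the chain rule one computes, on this set,
\[
\dv\Sigma_1(m)=-\sin(2\theta)\,\dv m,\qquad \dv\Sigma_2(m)=\cos(2\theta)\,\dv m,
\]
so that \eqref{eq12} forces $\dv m=0$ on $\Omega\setminus S$, because $\sin(2\theta)$ and $\cos(2\theta)$ cannot vanish simultaneously. Hence $m=\na^\perp u$ for some $u$ with $\abs{\na u}=1$ and vanishing Jin--Kohn entropy productions, with enough regularity ($u\in W^{2,p}_\loc(\Omega\setminus S)$) to fall within the reach of the mechanism of \cite{LP}: every entropy production $\dv\Phi(\na^\perp u)$ then vanishes on $\Omega\setminus S$, and the Jabin--Otto--Perthame rigidity \cite{otto} (via the method of characteristics / kinetic formulation) gives that $m$ is locally Lipschitz on $\Omega\setminus S$ up to a locally finite set of vortices, near each of which $m(x)=\alpha i\,(x-\zeta)/\abs{x-\zeta}$ with $\alpha\in\{\pm1\}$. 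Enlarging $S$ by these vortex points, $m$ is locally Lipschitz on $\Omega\setminus S$; finally, since $F$ is Lipschitz, a degree/energy argument around the points of $S$ rules out accumulation, so $S$ is locally finite, and the stated formula holds on any convex neighborhood $\OI\subset\subset\Omega$ of $\zeta\in S$, completing the proof.
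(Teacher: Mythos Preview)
Your Step~1 (the correspondence between $m$ and $F$) and the algebraic identities in Step~3 are fine; the formulas $\dv\Sigma_1(m)=-\sin(2\theta)\dv m$, $\dv\Sigma_2(m)=\cos(2\theta)\dv m$ are exactly \eqref{eqx11}--\eqref{eqx12} specialized to $|w|=1$. The difficulty is entirely in Step~2, and there the argument has a genuine gap.

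First, the Beltrami equation \eqref{eqy60} is critical \emph{everywhere} on the constraint set, not just at finitely many bad directions: the Lipschitz constant of $\mathcal H_0(\xi)=\tfrac43\xi^3$ on $|\xi|=\tfrac12$ is identically $1$. So there is no open set on which the subcritical Beltrami theory applies, and no a~priori regularity of $m$ (or $DF$) is available from that route. Second, and more importantly, the conclusion you aim for in Step~2 --- that $m\in W^{1,p}_\loc$ (equivalently $F\in W^{2,p}_\loc$) outside a small set --- is precisely the extra hypothesis under which \cite{LP} already proved the result, and whose removal is the point of the present theorem. The phrases ``blow-up combined with compensated compactness'' and ``degree/energy argument'' are placeholders for the entire content of the proof; as written, Step~2 assumes what has to be shown.

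The paper's strategy avoids any preliminary continuity or Sobolev regularity of $m$. It works directly with the mollifications $m_\e$ and uses pointwise commutator estimates together with the Besov regularity $m\in B^{1/3}_{4,\infty,\loc}$ (Lemma~\ref{L1}) to obtain first $\dv m\in L^{4/3}_\loc$ (Proposition~\ref{p:controldiv}), then a pointwise bound $|\dv\Phi(m)|\lesssim\|\Phi\|_{C^2(\mathbb S^1)}\,\mathcal P$ for \emph{all} entropies (Proposition~\ref{p:controlent}). Separately, for the ``harmonic'' entropies $\Phi^{E\psi}$ one computes $\dv\Phi^{E\psi}(m)=\mathcal A\psi(m)\,\dv m$ where $\mathcal A$ is an explicit Fourier multiplier whose principal part is $|k|^3$, i.e.\ the Hilbert transform composed with three derivatives (Proposition~\ref{p:compent}). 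Comparing the two, if $\dv m(x_0)\neq 0$ one would get $|\mathcal A\psi(\theta_0)|\lesssim\|\psi\|_{C^3}$, forcing the Hilbert transform to map $C^0(\mathbb S^1)$ into $L^\infty(\mathbb S^1)$, which is false. Hence $\dv m=0$ a.e., and then one invokes \cite{LP} directly (no extra regularity needed). Note also that the paper deduces Theorem~\ref{TT1} from Theorem~\ref{TT0}, not the other way round.
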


 As mentioned earlier, this result is a strong extension of the main result in \cite{LP}, which states the same regularity for $m$ satisfying the additional divergence free assumption, i.e., $m$ satisfying the Eikonal equation, and it opens new perspectives towards energy concentration results for Aviles-Giga minimizers.

To give the equivalent statement in terms of differential inclusions, we introduce the mapping $P: [0, 2\pi)\rightarrow \R^{2\times 2}$ given by
\begin{align}
P(\theta)&:=\left( \begin{array}{c}
i\Sigma_1(e^{i\theta}) \\
i\Sigma_2(e^{i\theta})
\end{array}\right)\nn\\
&=\lt(\begin{array}{cc} -\frac{2}{3}\cos^3(\theta)  & \frac{2}{3}\sin^3(\theta)  \\ -\sin(\theta)\lt(1-\frac{2}{3}\sin^2(\theta)\rt)  & -\cos(\theta)\lt(1-\frac{2}{3}\cos^2(\theta)\rt)\end{array}\rt) \label{eq1},
\end{align}
and we define
\begin{equation}
\label{eq2}
K:=\lt\{P(\theta):\theta\in \lt[0,2\pi\rt)\rt\}\subset\R^{2\times 2}.
\end{equation}

\begin{thm}
\label{TT1}
Let $\Omega \subset \R^2$ be an open set, and
 $F\colon\Omega\to\R^2$ be a Lipschitz map  satisfying the differential inclusion $DF\in K$ a.e. Then $DF$ is locally  Lipschitz outside a locally finite set of points $S$. 
Moreover,  for any singular point $\zeta\in S$ there exists $\alpha\in \lbrace \pm 1\rbrace$ such that in  any convex neighborhood $ \OI\subset\subset\Omega$ of $\zeta$, 
\begin{equation}
\label{eq210}
\lt(\begin{array}{c} F_{1,1}(x)+F_{2,2}(x) \\ F_{2,1}(x)-F_{1,2}(x) \end{array}\rt)=\alpha i \frac{x-\zeta}{\lt|x-\zeta\rt|}\quad\text{for all }x\in \OI. 
\end{equation}
\end{thm}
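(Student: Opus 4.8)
The plan is to reduce Theorem~\ref{TT1} to Theorem~\ref{TT0} by showing that the differential inclusion $DF\in K$ encodes a unit vector field solving \eqref{eq12}. The first observation is that the parametrization $P\colon[0,2\pi)\to K$ in \eqref{eq1} is injective: if $P(\theta)=P(\theta')$, then comparing the first rows gives $\cos^3\theta=\cos^3\theta'$ and $\sin^3\theta=\sin^3\theta'$, hence $e^{i\theta}=e^{i\theta'}$ and $\theta=\theta'$. Since $P(0)=P(2\pi)$, the map $P$ thus induces a continuous bijection of the circle onto the compact set $K$, hence a homeomorphism; composing its inverse with $\theta\mapsto e^{i\theta}$ produces a continuous map $\pi_K\colon K\to\mathbb S^1$ with $\pi_K(P(\theta))=e^{i\theta}$. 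I then define $m:=\pi_K\circ DF\colon\Omega\to\mathbb S^1$, which is measurable and bounded with $\abs m=1$ a.e.

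\noindent The next step is to verify that this $m$ satisfies the entropy conditions in \eqref{eq12}. By construction $DF(x)=P(\theta(x))$ for a.e.\ $x$, where $m(x)=e^{i\theta(x)}$, so reading off the two rows of \eqref{eq1} gives $\na F_1=i\,\Sigma_1(m)$ and $\na F_2=i\,\Sigma_2(m)$ a.e.\ in $\Omega$, where $i$ denotes rotation by $\tfrac\pi2$. Since $F$ is Lipschitz, each $F_j\in W^{1,\infty}_{\loc}(\Omega)$, so $\na F_j$ is curl-free in $\mathcal D'(\Omega)$, where $\mathrm{curl}\,w:=\partial_1 w_2-\partial_2 w_1$. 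Using the elementary pointwise identity $\mathrm{curl}(iw)=\dv w$ (valid distributionally for $w\in L^\infty_{\loc}$, since $(iw)=(-w_2,w_1)$), one gets
\begin{equation*}
\dv\Sigma_j(m)=\mathrm{curl}\bigl(i\,\Sigma_j(m)\bigr)=\mathrm{curl}(\na F_j)=0\quad\text{in }\mathcal D'(\Omega),\qquad j=1,2.
\end{equation*}
Hence $m$ satisfies \eqref{eq12}, and Theorem~\ref{TT0} applies: $m$ is locally Lipschitz outside a locally finite set $S$, and for each $\zeta\in S$ there is $\alpha'\in\{\pm1\}$ with $m(x)=\alpha' i\,\tfrac{x-\zeta}{\abs{x-\zeta}}$ on any convex $\OI\subset\subset\Omega$ containing $\zeta$.

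\noindent Finally I would transfer these conclusions back to $F$. Expanding the entries of \eqref{eq1} shows $P(\theta)=Q(e^{i\theta})$ for the polynomial map
\begin{equation*}
Q(v)=\left(\begin{array}{cc} -\tfrac{2}{3}\,v_1^3 & \tfrac{2}{3}\,v_2^3\\ -v_2\bigl(1-\tfrac{2}{3}v_2^2\bigr) & -v_1\bigl(1-\tfrac{2}{3}v_1^2\bigr)\end{array}\right),
\end{equation*}
so that $DF=Q(m)$ a.e.\ in $\Omega$. As $Q$ is locally Lipschitz and $m$ is locally Lipschitz on $\Omega\setminus S$, so is $DF$. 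Moreover a direct computation from the entries of $Q$ gives $F_{1,1}+F_{2,2}=-m_1$ and $F_{2,1}-F_{1,2}=-m_2$ a.e., i.e.\ $(F_{1,1}+F_{2,2},\,F_{2,1}-F_{1,2})=-m$; near a singular point $\zeta\in S$ this equals $-\alpha' i\,\tfrac{x-\zeta}{\abs{x-\zeta}}$, which is \eqref{eq210} with $\alpha:=-\alpha'\in\{\pm1\}$.

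\noindent The only genuinely delicate points specific to Theorem~\ref{TT1} are the properness and injectivity of $P$ (so that $m$ is a globally well-defined \emph{measurable} field, not merely a measurable selection among several admissible values) and the identity $\mathrm{curl}(iw)=\dv w$, which converts the trivial commutation of mixed second derivatives of the Lipschitz potential $F$ into the entropy-free conditions on $m$. The substance of the argument therefore lies entirely in Theorem~\ref{TT0}, whose proof relies on the new structural properties of entropies and is the true obstacle.
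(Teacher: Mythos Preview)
Your proposal is correct and follows essentially the same route as the paper's proof: define $m=e^{i\theta}$ from $DF=P(\theta)$, read off $\nabla F_j=i\Sigma_j(m)$, use $\mathrm{curl}(\nabla F_j)=0$ to obtain $\dv\Sigma_j(m)=0$, apply Theorem~\ref{TT0}, and then push the regularity and vortex structure back to $DF$ via $DF=Q(m)$. The only cosmetic difference is that the paper defines $m$ directly by the linear formula $m_1=-F_{1,1}-F_{2,2}$, $m_2=F_{1,2}-F_{2,1}$ (bypassing the need to invert $P$), and is slightly more explicit about why the a.e.\ identity $DF=Q(m)$ upgrades to genuine local Lipschitz regularity of $DF$ (namely, $Q(m)$ is locally the gradient of a $C^{1,1}$ function which must agree with $F$ up to a constant).
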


As already explained, 
Theorem \ref{TT1} is a considerable improvement on the corresponding  result in \cite{LP}, where the same rigidity was proved under the additional -- and unnatural -- assumption $F\in W^{2,1}(\Omega)$. Our Theorem \ref{TT1}, in contrast, is a sharp regularity result. In the following, we also reformulate this result as sharp regularity for a nonlinear Beltrami equation;  see Theorem~\ref{TT2} below.

\begin{rem}\label{rmk3}
Since Theorems~\ref{TT0} and \ref{TT1} are local statements, in the sequel we will assume without loss of generality that $\Omega$ is smooth, bounded and simply connected.
\end{rem}

\subsection{Differential inclusions} 

Regularity of differential inclusions is a classical subject. Let
\begin{equation*}
CO_{+}(n):=\lt\{\lm R: R\in SO(n)\rt\}.
\end{equation*} 
The first and best known result is analyticity of the differential 
inclusion $Du\in CO_{+}(2)$. This differential inclusion is nothing other than the Cauchy-Riemann equations and analyticity is one of the first basic theorems of complex analysis. Rigidity of the differential inclusion $Du\in CO_{+}(3)$ was studied by Liouville in 1850 for $C^3$ mappings \cite{liou}. The generalization of this result has been a topic of great interest in the Quasiconformal analysis community \cite{gehring,resh,boj,iw1,msy}. Another  classical example is  the differential inclusion into the set $\lt\{A\in \R^{n\times n}: A^T=A, \mathrm{Tr}(A)=0\rt\}$,  which corresponds to the Laplace equation $\Delta u=0$ in $\R^n$. 

We say a set $S\subset \R^{m\times n}$ has a 
\emph{Rank-$1$ connection} if there exist $A,B\in S$ with $\mathrm{Rank}(A-B)=1$. It is a simple fact that if a set $S$ has a Rank-$1$ connection then one can construct wild solutions to the differential inclusion $Dw\in S$ through the construction of laminates. 
It is tempting to conjecture that if a set $S$ 
has no Rank-$1$ connections then the differential inclusion $Dw\in S$ has higher regularity, but this is completely false.  This falsity is a key fact in the recent spectacular progress in counter-examples to regularity of PDE \cite{mulsv2, sz, mulsv3}. In $\R^{2\times 2}$, $\mathrm{Rank}(A-B)=1$ if and only if $\det(A-B)=0$. So for any connected analytic set $S\subset \R^{2\times 2}$ without Rank-$1$ connections, by Lojasiewicz inequality (up to the change of sign) there exists some $p\in \mathbb{N}$ such that $\det(A-B)\gtrsim \lt|A-B\rt|^p$. For an elliptic set, which is a set whose tangent space at any point does not contain Rank-$1$ connections, the inequality holds for $p=2$. In \cite{sverak} \v{S}ver\'{a}k proved 
\begin{thm}[\v{S}ver\'{a}k \cite{sverak}]
	\label{svdiffinc}
	Let $\Omega\subset\R^2$ be open and bounded and $S\subset\R^{2\times 2}$ be a closed connected smooth submanifold without Rank-$1$ connections. Further assume that $S$ is elliptic. Then every Lipschitz $w:\Omega\rightarrow \R^2$ satisfying $Dw\in S$ a.e.  is smooth.
\end{thm}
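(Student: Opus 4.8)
The plan is to prove Theorem~\ref{svdiffinc} by reducing the elliptic differential inclusion to an \emph{autonomous (reduced) nonlinear Beltrami equation} and then invoking the regularity theory for such equations. First I would localize: since $w$ is Lipschitz on a bounded domain, $Dw$ takes values in a compact subset of $S$, so after passing to small balls we may assume $S$ itself is compact. Identify $\R^2\cong\Cb$, set $f=w_1+iw_2$, and encode $Dw$ through the Wirtinger derivatives $p=\partial_z f$, $q=\partial_{\bar z}f$; the correspondence $Dw\leftrightarrow(p,q)$ is a linear isomorphism $\R^{2\times2}\to\Cb^2$ under which $S$ becomes a compact connected smooth submanifold $\wt S\subset\Cb^2$, and $\det(A-B)=|p_A-p_B|^2-|q_A-q_B|^2$. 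The absence of rank-$1$ connections means $\det(A-B)\neq0$ whenever $A\neq B$ in $S$; since this quantity is continuous and symmetric in $A,B$, it has a constant sign on the (connected) set of unordered distinct pairs of points of $S$, so --- replacing $f$ by $\bar f$ if necessary --- we may assume $|p_A-p_B|>|q_A-q_B|$ throughout. In particular the projection $\pi\colon\wt S\to\Cb$, $(p,q)\mapsto p$, is injective, and the constant-sign property also excludes tangent vectors of $\wt S$ of the form $(0,q)$ with $q\neq0$, so $\pi$ is moreover an immersion; hence $\wt S$ is the graph $q=h(p)$ of a smooth map $h$ on $\pi(\wt S)$, which is $1$-Lipschitz by the strict inequality above and in fact $k$-Lipschitz for some $k<1$ because $S$ is elliptic and compact. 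Extending $h$ to a smooth map on an open neighbourhood of $\pi(\wt S)$ that is still $k'$-Lipschitz with $k'<1$ (Whitney extension, shrinking the neighbourhood), we conclude that $f$ solves the autonomous nonlinear Beltrami equation $\partial_{\bar z}f=h(\partial_z f)$ a.e.\ in $\Omega$.

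It then remains to prove that a Lipschitz solution $f$ of $\partial_{\bar z}f=h(\partial_z f)$, with $h$ smooth and $k<1$, is $C^\infty$. The first step, valid in all cases, is to upgrade $f\in W^{1,\infty}_{\loc}$ to $f\in W^{2,2}_{\loc}$ by a difference-quotient argument: for small $v\in\Cb$ the increment $f^v:=f(\cdot+v)-f(\cdot)$ satisfies a \emph{linear} Beltrami equation $\partial_{\bar z}f^v=\mu^v\,\partial_z f^v$ with a measurable coefficient obeying $\|\mu^v\|_{L^\infty}\le k<1$ (factor the increment of $h$ through that of $\partial_z f$ using the Lipschitz bound), whence the Caccioppoli inequality for linear Beltrami equations bounds $\|\partial_z f^v\|_{L^2(B_r)}$ uniformly in $v$ and gives $g:=\partial_z f\in W^{1,2}_{\loc}$. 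If $\dim S\le1$, then $\pi(\wt S)$ is a regular curve $\gamma$ and $g=\gamma\circ s$ for a real function $s\in W^{1,2}_{\loc}$; the identity $\partial_{\bar z}\partial_z f=\partial_z\partial_{\bar z}f$ becomes $\partial_{\bar z}s=\mu(s)\,\partial_z s$ with $|\mu(s)|\le k<1$, and since $s$ is real, $\partial_{\bar z}s=\overline{\partial_z s}$, so $|\partial_z s|\le k\,|\partial_z s|$ and $\nabla s\equiv0$; thus $Dw$ is locally constant. If $\dim S=2$, then $W^{1,2}_{\loc}\hookrightarrow\mathrm{VMO}_{\loc}$ in the plane, so $g$ satisfies a uniformly elliptic linear Beltrami system $\partial_{\bar z}g=\mu(x)\,\partial_z g+\nu(x)\,\overline{\partial_{\bar z}g}$ with $|\mu|+|\nu|\le k<1$ and coefficients $\mu=h_p(g)$, $\nu=h_{\bar p}(g)$ that are smooth functions of the $\mathrm{VMO}$ map $g$, hence $\mathrm{VMO}$; the regularity theory for Beltrami systems with $\mathrm{VMO}$ coefficients then yields $g\in W^{1,q}_{\loc}$ for every $q<\infty$, hence $g\in C^{0,\alpha}_{\loc}$ for all $\alpha<1$, hence $\mu,\nu\in C^{0,\alpha}_{\loc}$, hence $g\in C^{1,\alpha}_{\loc}$, and iterating the bootstrap gives $g\in C^\infty$. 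In all cases $w\in C^\infty$.

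The step I expect to be the main obstacle is the analytic input in the second paragraph: the Caccioppoli estimate for Beltrami systems and, above all, the gains of integrability and then continuity for Beltrami systems with merely $L^\infty$, respectively $\mathrm{VMO}$, coefficients, which rely on Calder\'on--Zygmund theory for the Beurling transform and its commutators --- the same circle of ideas underlying the theory of planar quasiregular mappings, close in spirit to \v{S}ver\'{a}k's original argument. A secondary technical point is to make the reduction of the first paragraph fully rigorous, in particular the constant-sign argument and the smooth, Lipschitz-norm-almost-preserving extension of $h$ off the lower-dimensional set $\pi(\wt S)$ when $\dim S<2$; but these are routine once the geometric picture is in place, and everything downstream of the reduction is a standard elliptic bootstrap.
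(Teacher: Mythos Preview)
The paper does not prove Theorem~\ref{svdiffinc}. It is quoted, with attribution to \v{S}ver\'{a}k~\cite{sverak}, purely as background in the discussion of differential inclusions; no proof or sketch is given anywhere in the paper, and the theorem plays no logical role in the proofs of Theorems~\ref{TT0}, \ref{TT1} or \ref{TT2}. So there is nothing in the paper to compare your proposal against.

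That said, your outline is a reasonable account of how the result is actually proved (and you correctly flag that it is close to \v{S}ver\'{a}k's own argument): reduce the elliptic differential inclusion to a strictly subcritical autonomous nonlinear Beltrami equation via the graph representation $q=h(p)$ with $\mathrm{Lip}(h)=k<1$, gain one derivative by a difference-quotient/Caccioppoli argument, and then bootstrap using regularity for linear Beltrami systems with continuous (or $\mathrm{VMO}$) coefficients. Two small points worth tightening if you ever write this up in full: (i) the constant-sign argument needs the connectedness of the set of \emph{unordered} distinct pairs in $S$, which you state but should justify (it is immediate for $\dim S\ge 2$ by codimension, and for $\dim S=1$ by a direct check on intervals/circles); (ii) the smooth, Lipschitz-constant-almost-preserving extension of $h$ off $\pi(\wt S)$ when $\dim S=1$ deserves a line, since a careless extension could lose $k<1$. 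The analytic core you identify as the main obstacle---Calder\'on--Zygmund/Beurling-transform estimates underlying the $\mathrm{VMO}$ step---is indeed the substantive input, and is developed in the references the paper already cites (\cite{sverak}, \cite{ast2}).
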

 Since $CO_{+}(2)$ is a closed smooth connected elliptic set, this result is a far reaching generalization of analyticity of Lipschitz solutions of the Cauchy-Riemann equations (for more details on this topic see \cite{lor10}). To our knowledge this is the most general result on regularity of differential inclusions. The set $K$ defined by \eqref{eq2} is not elliptic and easy examples show that the regularity provided by 
Theorem \ref{TT1} is optimal: differential inclusions into $K$ do have singularities, however the nature of the singularities is explicitly given by \eqref{eq210} on any given convex subdomain $\OI\subset\subset \Omega$ containing a singularity $\zeta$. Another way to formulate this is as follows. Let $\theta(x)$ be defined by $P(\theta(x))=DF(x)$, then for some $\alpha\in \lt\{1,-1\rt\}$, 
\begin{equation*}
e^{i\theta(x)}=\alpha i \frac{x-\zeta}{\lt|x-\zeta\rt|}\qd\text{ for all }x\in \OI,
\end{equation*}
i.e.\ $e^{i\theta(x)}$ forms a vortex around $\zeta$. 

Theorem \ref{TT1} can also be formulated as a result for a nonlinear Beltrami equation as done in \cite{LP}. Namely, we also have
\begin{thm}
	\label{TT2}
	Given an open set $\ti\Omega\subset \mathbb{C}$,  let $v:\ti\Omega\to\mathbb{C}$ be a Lipschitz function that satisfies the nonlinear Beltrami system 
	\begin{equation}
	\label{eqy60}
	\frac{\partial v}{\partial \bar{z}}(z)=\frac{4}{3}\lt(\frac{\partial v}{\partial z}(z) \rt)^3,\qd\lt|\frac{\partial v}{\partial z}(z)\rt|=\frac{1}{2}\qd\text{ for a.e. }z\in \ti\Omega, 
	\end{equation}
	 then $\na v$ is locally Lipschitz outside a discrete set $S$,
	  and for any convex set $\OI\subset\subset \ti\Omega$, we have $\ca{\OI\cap S}=1$. 
\end{thm}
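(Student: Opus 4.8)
I propose to deduce Theorem~\ref{TT2} from Theorem~\ref{TT1} by an explicit algebraic change of variables, in the spirit of the passage between the differential inclusion $DF\in K$ and a nonlinear Beltrami equation already carried out in \cite{LP}. First I would encode a Beltrami solution as a differential inclusion: let $v\colon\ti\Omega\to\mathbb C$ be Lipschitz and solve \eqref{eqy60}. Since $|\partial v/\partial z|=\tfrac12$ a.e., choose a measurable $\theta\colon\ti\Omega\to\R$ with $\partial v/\partial z=\tfrac12 e^{-i\theta}$ a.e.; then \eqref{eqy60} forces $\partial v/\partial\bar z=\tfrac16 e^{-3i\theta}$. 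Converting to real partials via $\partial_{x_1}v=\partial v/\partial z+\partial v/\partial\bar z$ and $\partial_{x_2}v=i(\partial v/\partial z-\partial v/\partial\bar z)$ and using the triple-angle formulas, one computes the real and imaginary parts of $\na v=(\partial_{x_1}v,\partial_{x_2}v)$ and finds that, up to the reflection $\iota(x)=(-x_1,x_2)$ and a sign on the second component, they are precisely the two rows of $P(\theta)$ from \eqref{eq1}. Concretely, setting $\Omega:=\iota(\ti\Omega)$ and $F:=(v_1\circ\iota,\,-v_2\circ\iota)\colon\Omega\to\R^2$, one gets $DF(x)=P\big(\theta(\iota(x))\big)\in K$ for a.e.\ $x\in\Omega$, and $F$ is Lipschitz because $v$ is; note that no regularity of $\theta$ is needed, since $DF\in K$ a.e.\ is a pointwise condition on $DF$ and $P$ maps $\R$ onto $K$.

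Next I would apply Theorem~\ref{TT1} to $F$. It yields a locally finite set $S_F\subset\Omega$ such that $DF$ is locally Lipschitz on $\Omega\setminus S_F$, and for every $\zeta\in S_F$ and every convex $\OI\subset\subset\Omega$ with $\zeta\in\OI$, identity \eqref{eq210} holds on $\OI$. From the explicit entries of $P$ one reads off $F_{1,1}+F_{2,2}=-\cos(\theta\circ\iota)$ and $F_{2,1}-F_{1,2}=-\sin(\theta\circ\iota)$, so \eqref{eq210} states exactly that $e^{i\theta(\iota(x))}=-\alpha i\,(x-\zeta)/|x-\zeta|$ on $\OI$, i.e.\ the field $\partial v/\partial z$ forms a vortex about the singular point $\iota(\zeta)\in\ti\Omega$.

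Finally I would transfer the conclusion back to $\ti\Omega$. Set $S:=\iota(S_F)$; since $\iota$ is an affine involution, $S$ is discrete in $\ti\Omega$, and since $v_1=F_1\circ\iota$, $v_2=-F_2\circ\iota$, the map $\na v$ is locally Lipschitz on $\ti\Omega\setminus S$. Let $\OI'\subset\subset\ti\Omega$ be convex; then $\iota(\OI')$ is convex and $\subset\subset\Omega$. If $\iota(\OI')$ contained two distinct points $\zeta_1\neq\zeta_2$ of $S_F$, then \eqref{eq210} applied about $\zeta_1$ and about $\zeta_2$ would give $\alpha_1 i\,(x-\zeta_1)/|x-\zeta_1|=\alpha_2 i\,(x-\zeta_2)/|x-\zeta_2|$ for all $x\in\iota(\OI')$, which is impossible since the left side has no limit as $x\to\zeta_1$ while the right side is continuous there. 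Hence $\OI'\cap S$ contains at most one point, with precisely one present whenever $\OI'$ meets $S$ and the vortex of the previous paragraph describing it; this is the content of Theorem~\ref{TT2}. (The converse reduction, producing from any $F$ with $DF\in K$ a Beltrami solution $v$, is entirely analogous, so Theorems~\ref{TT1} and \ref{TT2} are genuinely equivalent.)

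The main point to get right is not an analytic difficulty — all the substance lies in Theorem~\ref{TT1} — but the bookkeeping: one must pin down that it is exactly the reflection $\iota$ together with the sign change on $v_2$, rather than some rotated or otherwise reflected copy of $K$, that carries $\na v$ onto $K$; and one must be comfortable that an arbitrary measurable branch $\theta$ of $\arg\big(2\,\overline{\partial v/\partial z}\big)$ is both all that is available and all that is needed. The verification of the triple-angle identities in the first step is routine — it is the same computation that produced \eqref{eq1}.
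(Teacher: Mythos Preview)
Your proposal is correct and follows exactly the approach the paper takes: deduce Theorem~\ref{TT2} from Theorem~\ref{TT1} via the explicit algebraic identification between Beltrami solutions $v$ and maps $F$ with $DF\in K$, as in \cite[Theorem~5]{LP}. The paper's proof is a single sentence referring to that identification; you have simply written out the bookkeeping (the reflection $\iota$, the sign on $v_2$, and the triple-angle computation) that makes it explicit, and added the easy uniqueness argument for the vortex in a convex subdomain.
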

This result is again a strong improvement of the corresponding result in \cite{LP}, where the same regularity for $v$ was established under an additional $W^{2,1}$ regularity assumption on $v$. 
Equation \eqref{eqy60} can be recognized as a nonlinear Beltrami system by introducing $\mathcal{H}_0\lt(\xi\rt):=\frac{4}{3}\xi^3$. Then this equation can be written as $\frac{\partial v}{\partial \bar{z}}(z)=\mathcal{H}_0\lt(\frac{\partial v}{\partial z}(z)\rt)$, $\lt|\frac{\partial v}{\partial z} \rt|=\frac{1}{2}$ a.e. Equations of the form  $\frac{\partial v}{\partial \bar{z}}=\mathcal{H}\lt(z,\frac{\partial v}{\partial z}\rt)$ have received a great deal of study in the last few years.  Under the assumptions that 
\begin{enumerate}[label=(\roman*)]
	\item  $z\mapsto \mathcal{H}\lt(z,w\rt)$ is measurable,
	\item  and for $w_1,w_2\in \mathbb{C}$, $\lt|\mathcal{H}\lt(z,w_1\rt)-\mathcal{H}\lt(z,w_2\rt)\rt|\leq k\lt|w_1-w_2\rt|$ for some $k<1$,
\end{enumerate}
a powerful existence and regularity theory of nonlinear Beltrami equations has been developed; see \cite{boj1,iwan,boj2,ast,ast2}. 
Our system \eqref{eqy60} corresponds to a critical case,
 since the mapping $\mathcal{H}_0$ has Lipschitz constant $1$ on the circle with radius $\frac{1}{2}$. 
As such Theorem \ref{TT2} does not follow from any of the known regularity results,  and is to our knowledge the first to hold in the critical  and genuinely nonlinear case. For linear Beltrami equations there are many powerful results in the critical case, see \cite[Chapter~20]{ast2}. Note also that the power 3 nonlinearity $\mathcal H_0$ appears as an interesting particular case in \cite[Remark~15]{ACFJK}.

\subsection{The Aviles-Giga functional} 
As noted at the beginning the Aviles-Giga functional is a higher order generalization of the Cahn-Hilliard functional. In 1977 Modica-Mortola \cite{modmor} proved that the Cahn-Hilliard functional $\Gamma$-converges to the surface area of the jump set of the limiting function. This proved a conjecture of De Giorgi \cite{degio} and was one of the first results in $\Gamma$-convergence. Since then vast literature in applying $\Gamma$-convergence to problems in Calculus of Variations and PDE has evolved. One of the main conjectures in the field of $\Gamma$-convergence is, 
loosely stated, the conjecture that the $\Gamma$-limit of the Aviles-Giga functional is an energy functional  of the form
\begin{equation*}
I_0(\nabla u)=c\int_{J_{\nabla u}}\abs{\nabla u^+ -\nabla u^-}^3 d\mathcal H^1\qquad\text{for }u\text{ solving }\abs{\nabla u}=1,
\end{equation*}
where $J_{\nabla u}$ is a one-dimensional jump set, and $\nabla u^\pm$ denote traces of $\nabla u$ on each side of the jump set.
 The principal reason that makes the Aviles-Giga conjecture much more difficult than the $\Gamma$-convergence of the Cahn-Hilliard functional is that the 
 power $3$ scaling of the Aviles-Giga functional makes the BV function theory inapplicable, and it is not even clear that $\na u$ has a one-dimensional jump set. If one assumes that $\nabla u$ is $BV$, then the conjecture is settled in \cite{ADM, ark,contidel}. However a strong limit $u$ of a sequence of bounded Aviles-Giga energy does not in general satisfy $\nabla u\in BV$ (see \cite{ADM}), and despite considerable efforts from multiple authors  \cite{avgig0,avgig1,ADM,mul2,ottodel1} the $\Gamma$-convergence conjecture remains very much open. 
 
 Similar questions and open problems arise in the context of a micromagnetics energy first studied by Rivi\`ere and Serfaty \cite{ser1,ser3}. There some issues are simplified due to the fact that vortices can not appear in the limit, but the works on both problems have certainly influenced each other (see e.g. the rectifiability results \cite{ottodel1,AKLR02}). Analogous issues are also of importance in the study of large deviation principles for some stochastic processes, where the limiting equations are scalar conservation laws \cite{BBMN10}.

The precise conjecture in \cite{ADM} is that the $\Gamma$-limit is (up to a constant) the total mass of the entropy measure
\begin{equation*}
\mu = \left\Vert \begin{array}{c} \dv\Sigma_1(\nabla^\perp u)\\\dv\Sigma_2(\nabla^\perp u)\end{array}\right\Vert,
\end{equation*}
which is indeed controlled by the energy, and coincides with the cubic jump cost when $\nabla u\in BV$.
The main choke point for progress on the conjecture is the lack of methods or tools to show that
 $\mu$ is concentrated on a rectifiable one-dimensional set. In this direction, De Lellis and Otto prove in \cite{ottodel1} that the points of positive one-dimensional density for $\mu$ do form an $\mathcal H^1$-rectifiable set, but so far concentration for $\mu$ remains completely out of reach. It is not even known that $\mu$ is singular with respect to the Lebesgue measure. Progress towards such concentration results is in truth our main motivation.

Analogous questions can be studied for weak solutions of Burgers' equation, motivated by similar $\Gamma$-convergence conjectures related to large deviation principles for some stochastic processes; see \cite{BBMN10} and the references therein. There one-dimensional concentration of the entropy measure is also open but it is shown in \cite{xavotto} that the set of non-Lebesgue points has dimension at most one (very recently this has been extended to more general conservation laws in \cite{marconi19}). In the Aviles-Giga setting such result is not known yet.

The most natural way to tackle the problem of concentration is to prove a  Poincar\'e type inequality,  that would bound the distance of $u$ to zero energy states in terms of the Aviles-Giga energy.  
This is in the spirit of what was achieved in \cite{xavotto} in the context of Burgers' equation, and this was also the motivation for \cite{lor25}.  
Part of our interest in proving Theorem \ref{TT1} is to develop a new tool to establish such an inequality  in the Aviles-Giga setting. 
Specifically we are motivated by the recent powerful  quantitative  stability estimates for the rigidity of differential inclusion into $SO(n)$, $CO_{+}(n)$ \cite{MFJ,farazh} and have a view to proving a  stability estimate for the differential inclusion into $K$. 

The first step in proving quantitative  stability is to show rigidity for the differential inclusion itself and that is what is achieved in Theorem \ref{TT1}.  Our hope is to obtain in a future step a stability estimate of the form
\begin{equation}\label{eq:hopestability}
\inf_{\lbrace G\in W^{1,\infty}(B_1)\,\text{s.t.}\,\nabla G\in K\rbrace} \int_{B_1}\abs{\nabla F -\nabla G}\lesssim \left(\int_{B_1}\dist(\nabla F,K)\right)^\alpha
\end{equation}
for all Lipschitz $F$ and for some $\alpha>0$.
Then the crucial interest of Theorem~\ref{TT0}/\ref{TT1} is that it tells us that the states of exact differential inclusion coincide (modulo a canonical transformation) with the zero energy states for Aviles-Giga. Therefore combining \eqref{eq:hopestability} with 
the quantitative Hodge estimate in \cite[Theorem~4.3]{ADM}  would imply 
\begin{equation*}
 \inf_{\lbrace \nabla v \text{ zero energy state}\rbrace}\int_{B_1}\abs{\nabla u-\nabla v}\lesssim \mu(B_1)^\alpha,
\end{equation*}
which is the above mentioned Poincar\'e type inequality.

\subsection{Proof sketch and plan of the article}

Throughout this paper, we use the notation $A\lesssim B$ to indicate $A\leq cB$ for some constant $c$ independent of the underlying domain or functions. Recall that our goal is to show regularity and rigidity of a vector field $m$ that satisfies
\begin{equation}\label{sketch:equation}
\abs{m}=1\text{ a.e.},\quad\text{and }\qd\dv\Sigma_j(m)=0\text{ for }j=1,2.
\end{equation}
In \cite{LP} this was achieved under the additional assumption that $\dv m=0$. Thus the proof of Theorem~\ref{TT0} will consist in proving that $\dv m=0$ so that we can appeal to \cite{LP} to conclude. An indication that this should be true is the explicit identity
\begin{align}\label{sketch:identity}
\dv m & =-2m_1m_2 \dv\Sigma_1(m) \nn\\
&\quad  + (m_1^2-m_2^2)\dv\Sigma_2(m)\qquad\text{if }m\colon\R^2\to\mathbb S^1\text{ is smooth.}
\end{align}
If $m$ is not regular enough to apply the chain rule, this identity can not be computed. It is then natural to try approximating $m$ with a mollification $m_\e=m*\rho_\e$. But $m_\e$ does not take values into $\mathbb S^1$, and a lot of additional terms appear in the identity \eqref{sketch:identity}. These terms involve the lack of \enquote{$\mathbb S^1$-valuedness} through the nonlinear commutator
\begin{equation*}
1-\abs{m_\e}^2 = [\Pi(m)]_\e -\Pi(m_\e),\qquad \Pi=\abs{\cdot}^2.
\end{equation*}
It was remarked in \cite{DeI} that if $m$ is \enquote{$\frac 13$-differentiable} in a strong enough sense ($W^{\frac 13,3}$ in that case), then commutator estimates (introduced in \cite{titi} in the context of Euler's equation) imply that such additional terms vanish in the limit $\e\to 0$. Here some regularity of $m$ is available thanks to \cite{LP}, where it is shown that any $m$ satisfying \eqref{sketch:equation} has the Besov regularity $B^{\frac 13}_{4,\infty}$. (This is related to a weak coercivity property of the differential inclusion into $K$, namely $\det(A-B)\gtrsim \abs{A-B}^4$ for all $A,B\in K$, and to standard compensated compactness tools for estimating determinants. See \cite{FK} for a regularity result in a similar spirit.) This Besov regularity  does not imply the $W^{\frac 13,3}$ regularity used in \cite{DeI}; it is not good enough to ensure that the commutator terms tend to zero, and to obtain \eqref{sketch:identity} for our map $m$. It is however good enough to bound the commutator terms in order to deduce that
\begin{equation*}
\dv m\in L^{\frac 43}_{\loc},
\end{equation*}
and this constitutes the first step of our proof in Section~\ref{s:controldiv}. In the same spirit, throughout the article we make extensive use of commutator estimates to derive useful information from identities that are valid for smooth $\mathbb S^1$-valued maps.

The rest of the proof is to obtain $\dv m=0$ from this preliminary estimate. To that end we use a tool already crucial in \cite{mul2,otto,ignatmerlet12,DeI,LP,GL}, namely entropies and entropy productions.
The terminology comes from an analogy with scalar conservation laws, where similar objects play an important role.
 An entropy is a $C^2$ map $\Phi\colon\mathbb S^1\to\R^2$ that provides an admissible renormalization  of the Eikonal equation \eqref{eq11}
in the sense (similar to \cite{dipernalions} for transport equations) that $\dv\Phi(m)=0$ for all smooth solutions of the Eikonal equation.  Applying the chain rule one sees that this is equivalent to the requirement
\begin{equation}\label{eq14}
e^{it}\cdot\frac{d}{dt}\Phi(e^{it})=0.
\end{equation} 
If a solution $m$ is $BV$, one can still apply the chain rule and see that $\dv\Phi(m)$ is concentrated on the jump set $J_m$. For instance the Jin-Kohn entropies \eqref{eqx10} are entropies in that sense. Note that here we will be computing entropy productions $\dv\Phi(m)$ of unit vector fields $m$ that may not be divergence free, so additional terms involving $\dv m$  will appear. But since $\dv m\in L^{\frac 43}_{\loc}$ we can use commutator estimates as described above in the spirit of \cite{DeI} to deduce that $\dv\Phi(m)\in L^{\frac 43}_{\loc}$ for our map $m$ and any entropy $\Phi$.  Refining the commutator estimates from \cite{DeI}, we obtain in fact a more precise pointwise bound.
 Specifically, for all $C^2$ entropies $\Phi$, we have
\begin{equation}\label{sketch:controlent}
\abs{\dv\Phi(m)}\lesssim \norm{\Phi}_{C^2(\mathbb S^1)}\mathcal P\quad\text{a.e., for some }\mathcal P\in L^{\frac 43}_{\loc}.
\end{equation}
This is achieved in Section~\ref{s:controlent}.

Note that to obtain \eqref{sketch:controlent} we mollify $m$
and since the mollified $m_\e$ take values into $\overline B_1$ instead of $\mathbb S^1$, one first needs to extend $\Phi$ to $\overline B_1$. For \eqref{sketch:controlent} the choice of an extension is quite flexible, but the rest of our proof relies crucially on choosing extensions with specific algebraic properties to obtain more information.  A key observation in \cite{LP} is that a special family of extended entropies $\wt\Phi\colon \R^2\to\R^2$, called \emph{harmonic} entropies, satisfy the identity
\begin{align}\label{sketch:identityharm}
\dv\wt\Phi(m)=A^{\wt\Phi}(m)\dv m &+ F_1^{\wt\Phi}(m)\dv\Sigma_1(m)\nn\\
&+F_2^{\wt\Phi}(m)\dv\Sigma_2(m)\qquad\text{if }m\colon\R^2\to\mathbb S^1 \text{ is smooth},
\end{align}
where $A^{\wt\Phi}$, $F_1^{\wt\Phi}$ and $F_2^{\wt\Phi}$ are smooth functions depending on the harmonic entropy $\wt \Phi$. 
Using \eqref{sketch:identityharm} and commutator estimates and recalling that $\dv\Sigma_j(m)=0$, we are then able to compute 
\begin{equation}\label{sketch:compent}
\dv\wt\Phi(m)=A^{\wt\Phi}(m) \dv m\qquad \text{a.e.},
\end{equation}
for our map $m$.  We prove this identity in Section~\ref{s:compent}.

The conclusion of the proof follows in Section~\ref{s:proof}, where we remark that for any fixed $x$, the explicit linear map  $\wt\Phi\mapsto A^{\wt\Phi}(m(x))$ can not satisfy a bound of the form $\abs{A^{\wt\Phi}(m(x))}\lesssim\norm{\wt\Phi_{\lfloor\mathbb S^1}}_{C^2(\mathbb S^1)}$. This is related to the fact that the Hilbert transform (or conjugate function operator) on $\mathbb S^1$ is not bounded from $C^0(\mathbb S^1)$ to $L^\infty(\mathbb S^1)$ (see e.g. \cite[\S~VII]{zygmund}). As a consequence, the only possibility for \eqref{sketch:compent} and \eqref{sketch:controlent} to be compatible is that $\dv m=0$ a.e., and we are then in a situation to apply the rigidity result in \cite{LP}.\nl

\emph{Acknowledgements.} 
 We warmly thank the anonymous referee for suggesting significant simplifications,  in particular pointing out that one of our key arguments could be directly formulated in terms of the Hilbert transform without resorting to superfluous intermediate steps. 
A.L. gratefully acknowledges the support  of the Simons foundation, collaboration grant \#426900. X.L. was supported in part by ANR project ANR-18-CE40-0023 and COOPINTER project IEA-297303.

\section{Control of $\dv m$ in $L^{\frac 43}$}\label{s:controldiv}

In this section we obtain some preliminary $L^{\frac 43}$ control of $\dv m$ for $m$ satisfying the assumptions of Theorem \ref{TT0}. Let $\rho$ be the standard convolution kernel, and let $\rho_{\ep}(z)=\rho\lt(\frac{z}{\ep}\rt)\ep^{-2}$. 
Given a function $f$ we let $[f]_{\ep}:=f*\rho_{\ep}$.  Let us first recall the definition of Besov spaces on domains \cite{triebel06}. Given a bounded Lipschitz domain $\Omega\subset\R^n$ and $f:\Omega\rightarrow \R$, $z\in\R^n$, we define 
\begin{equation*}
D^zf(x):=\begin{cases}
f(x+z)-f(x) &\text{ if } x,x+z\in\Omega;\\
0 &\text{ otherwise}.
\end{cases}
\end{equation*}
For any $s\in (0,1)$ $p,q\in [1,\infty]$, we set
\begin{equation*}
\abs{f}_{B^s_{p,q}(\Omega)}=\left\Vert t^{-s}\sup_{\abs{h}\leq t}\norm{D^h f}_{L^p(\Omega)} \right\Vert_{L^q(\frac{dt}{t})},
\end{equation*}
and  the Besov space $B^s_{p,q}(\Omega)$ is the space of functions $f\colon\Omega\to\R$ such that
\begin{equation*}
\norm{f}_{L^p(\Omega)} + \abs{f}_{B^{s}_{p,q}(\Omega)} <\infty.
\end{equation*}
In the sequel we will mostly use the space $B^{\frac 13}_{4,\infty}$. The main result of this section is

\begin{prop}\label{p:controldiv}
	Let $m:\Omega\rightarrow\R^2$ satisfy \eqref{eq12}, then
	\begin{equation*}
	m\in B^{\frac 13}_{4,\infty,\loc}(\Omega)\quad\text{and}\quad\dv m\in L^{\frac{4}{3}}_{\loc}(\Omega). 
	\end{equation*}
	Moreover we have
	\begin{equation}\label{eq340}
	\abs{\dv m}\lesssim \PPI\qquad\text{a.e. in }\Omega,
	\end{equation}
	where $\PPI\in L^{\frac 43}_{\loc}(\Omega)$ is any weak $L^{\frac 43}_{\loc}$ limit of a subsequence of
	\begin{equation}\label{eq600}
	\PPI_\e (x):=\ep^{-1}\Xint{-}_{B_{\ep}(0)} \lt|D^z m(x)\rt|^3 dz,
	\end{equation}
	as $\e\to 0$.
\end{prop}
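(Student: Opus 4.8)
The plan is to mollify $m$, work with an exact algebraic identity valid for smooth vector fields, and quantify the loss of $\mathbb S^1$-valuedness by commutator estimates in the spirit of \cite{titi,DeI}. Write $m_\e=[m]_\e$. First I would record the preliminary facts. The membership $m\in B^{\frac13}_{4,\infty,\loc}(\Omega)$ is already contained in \cite{LP} for any $m$ satisfying \eqref{eq12} (it reflects the coercivity $\det(A-B)\gtrsim\abs{A-B}^4$ on $K$, via standard compensated compactness applied to the associated Lipschitz map). Granting this, Jensen's inequality on the probability measure $\dashint_{B_\e(0)}dz$, Fubini, and the definition of $B^{\frac13}_{4,\infty}$ give $\norm{\PPI_\e}_{L^{4/3}(\Omega')}\lesssim1$ uniformly in $\e$ for every $\Omega'\subset\subset\Omega$, so along a subsequence $\PPI_\e\rightharpoonup\PPI$ weakly in $L^{4/3}_{\loc}(\Omega)$ with $\PPI\ge0$. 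I would also use the elementary pointwise bound $\abs{Dm_\e(x)}\lesssim\e^{-1}\dashint_{B_\e(0)}\abs{D^zm(x)}\,dz$ and, crucially exploiting that $\abs m=1$ so that $[\abs{m}^2]_\e\equiv1$, the Constantin--E--Titi identity $1-\abs{m_\e}^2=\int\rho_\e(y)\abs{D^{-y}m}^2\,dy-\abs{m_\e-m}^2$, which gives $0\le1-\abs{m_\e(x)}^2\lesssim\dashint_{B_\e(0)}\abs{D^zm(x)}^2\,dz$.

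Next, a direct computation with \eqref{eqx10} shows that for every smooth $n\colon\R^2\to\R^2$ one has $\dv\Sigma_1(n)=-2n_1n_2\,\dv n+(1-\abs{n}^2)(\partial_1n_2+\partial_2n_1)$ and $\dv\Sigma_2(n)=(n_1^2-n_2^2)\,\dv n-(1-\abs{n}^2)(\partial_1n_1-\partial_2n_2)$, hence, since $4n_1^2n_2^2+(n_1^2-n_2^2)^2=\abs{n}^4$,
\begin{equation*}
-2n_1n_2\,\dv\Sigma_1(n)+(n_1^2-n_2^2)\,\dv\Sigma_2(n)=\abs{n}^4\,\dv n+(1-\abs{n}^2)\,\mathcal E(n,Dn),
\end{equation*}
where $\mathcal E$ is quadratic in $n$ and linear in $Dn$. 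I would apply this identity with $n=m_\e$ (legitimate since $m_\e$ is smooth) and $\dv m_\e=[\dv m]_\e$, and estimate the two error sources. The curvature term is handled directly: by the bounds above and Hölder's inequality on $\dashint_{B_\e}dz$, one gets $\abs{(1-\abs{m_\e}^2)\mathcal E(m_\e,Dm_\e)}\lesssim\big(\dashint_{B_\e}\abs{D^zm}^2\big)\e^{-1}\big(\dashint_{B_\e}\abs{D^zm}\big)\le\e^{-1}\dashint_{B_\e}\abs{D^zm}^3=\PPI_\e$ a.e.

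The hard part will be the entropy-production terms $\dv\Sigma_j(m_\e)$. Since $\dv\Sigma_j(m)=0$ in $\mathcal D'(\Omega)$ and the linear part of $\Sigma_j$ commutes with mollification, $\dv\Sigma_j(m_\e)$ equals the divergence of the \emph{cubic} nonlinear commutator $N_j(m_\e)-[N_j(m)]_\e$ (here $N_j$ is the cubic part of $\Sigma_j$); writing this out, $\dv\Sigma_j(m_\e)(x)=\sum_k\int\partial_k\rho_\e(y)\big[\nabla_vN_j^{(k)}(m_\e(x))\cdot D^{-y}m(x)-\big(N_j^{(k)}(m(x-y))-N_j^{(k)}(m(x))\big)\big]\,dy$. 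The naive estimate of the bracket only yields $\e^{-1}\dashint_{B_\e}\abs{D^zm}^2$, which is \emph{not} bounded in $L^{4/3}_{\loc}$ at the borderline regularity $B^{\frac13}_{4,\infty}$, so one must refine the argument of \cite{DeI}: Taylor-expanding $N_j^{(k)}$ about $m(x)$, and using the homogeneity of $N_j$ together with the vanishing moments $\int\partial_k\rho_\e\,dy=0$ and $\int\partial_k\rho_\e\,y_ay_b\,dy=0$, the genuinely dangerous quadratic contributions cancel and one is left with the pointwise bound $\abs{\dv\Sigma_j(m_\e)}\lesssim\PPI_\e$ a.e. (up to lower-order terms controlled in $L^{4/3}_{\loc}$ by $\PPI_\e$). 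Carrying out this bookkeeping carefully --- tracking exactly which terms cancel so that only ``three-difference'' quantities survive --- is where I expect the real work to be.

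Finally, combining the above with the algebraic identity gives $\abs{m_\e}^4\abs{[\dv m]_\e}\lesssim\PPI_\e$ a.e. To remove the possibly-degenerate weight $\abs{m_\e}^4$ I would argue by dichotomy: since $1-\abs{m_\e(x)}^2\lesssim(\e\,\PPI_\e(x))^{2/3}$, there is $c_0>0$ such that at points with $\e\,\PPI_\e(x)\le c_0$ one has $\abs{m_\e(x)}^4\ge\frac12$, hence $\abs{[\dv m]_\e(x)}\lesssim\PPI_\e(x)$; and at points with $\e\,\PPI_\e(x)>c_0$ the trivial bound $\abs{[\dv m]_\e}=\abs{\dv m_\e}\le\norm{m}_{L^\infty}\norm{\nabla\rho_\e}_{L^1}\lesssim\e^{-1}\lesssim\PPI_\e(x)$ applies. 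Thus $\abs{[\dv m]_\e}\lesssim\PPI_\e$ a.e.; since $[\dv m]_\e$ is then bounded in $L^{4/3}_{\loc}$ and $[\dv m]_\e\to\dv m$ in $\mathcal D'(\Omega)$, we conclude $\dv m\in L^{4/3}_{\loc}(\Omega)$ and $[\dv m]_\e\rightharpoonup\dv m$ weakly in $L^{4/3}_{\loc}$. Passing to the weak limit in the inequalities $C\PPI_\e\pm[\dv m]_\e\ge0$ then gives $\abs{\dv m}\le C\PPI$ a.e., which is \eqref{eq340}.
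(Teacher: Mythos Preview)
Your overall strategy---mollify, exploit the algebraic identity relating $\dv m$, $\dv\Sigma_j(m)$, and the defect $1-\abs{m}^2$, then close with commutator estimates---is the same as the paper's. The curvature bound $(1-\abs{m_\e}^2)\abs{\nabla m_\e}\lesssim\PPI_\e$ and the weak-limit passage are fine (the paper avoids the weight $\abs{m_\e}^4$ and your dichotomy by working distributionally throughout and reading off \eqref{eq340} at Lebesgue points).

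The genuine gap is the entropy-production step. Your claim that the quadratic contributions in
\[
\dv\Sigma_j(m_\e)=\dv\big(N_j(m_\e)-[N_j(m)]_\e\big)
\]
cancel via the moment conditions $\int\partial_k\rho_\e\,y_ay_b\,dy=0$ does not work: those moments kill quadratic polynomials in $y$, but since $m$ is only $B^{1/3}_{4,\infty}$ you cannot Taylor-expand $D^{-y}m(x)$ in $y$. Writing the bracket out for, say, $N(v)=v_1^3$ gives $(c-a)\big[3(b-a)(b+a)-(c-a)(c+2a)\big]$ with $a=m_1(x)$, $b=m_{\e,1}(x)$, $c=m_1(x-y)$; after integrating against $\partial_k\rho_\e$ the surviving terms are genuinely of order $\e^{-1}\dashint_{B_\e}\abs{D^zm}^2\,dz$. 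At $B^{1/3}_{4,\infty}$ regularity this quantity has $L^{4/3}$ norm of order $\e^{-1/3}$ and blows up as $\e\to0$, so no pointwise bound $\abs{\dv\Sigma_j(m_\e)}\lesssim\PPI_\e$ is available.

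The paper sidesteps this by a single integration by parts. With $G_1(w)=-2w_1w_2$, $G_2(w)=w_1^2-w_2^2$ and using $\dv[\Sigma_k(m)]_\e=0$,
\[
G_k(m_\e)\,\dv\Sigma_k(m_\e)=\dv\!\Big[G_k(m_\e)\big(\Sigma_k(m_\e)-[\Sigma_k(m)]_\e\big)\Big]-DG_k(m_\e)[\nabla m_\e]\cdot\big(\Sigma_k(m_\e)-[\Sigma_k(m)]_\e\big).
\]
The first term tends to $0$ in $\mathcal D'(\Omega)$ because $\Sigma_k(m_\e)-[\Sigma_k(m)]_\e\to0$ in $L^1$; the second is pointwise $\lesssim\PPI_\e$ by the commutator estimate $\abs{[\Pi(m)]_\e-\Pi(m_\e)}\,\abs{\nabla m_\e}\lesssim\PPI_\e$ applied with $\Pi=\Sigma_k$. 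This is precisely the structure your argument is missing: move the derivative off the commutator and onto the smooth prefactor $G_k(m_\e)$, so that one factor of $\abs{\nabla m_\e}\lesssim\e^{-1}\dashint\abs{D^zm}$ multiplies the undifferentiated commutator of size $\dashint\abs{D^zm}^2$, and H\"older on the average closes to $\PPI_\e$.
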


Let us briefly sketch the proof of Proposition~\ref{p:controldiv}. A key role is played by the identity
	\begin{align*}
	\dv w &= -2w_1w_2\dv\Sigma_1(w) \\
	&\quad +(w_1^2-w_2^2)\dv\Sigma_2(w) \\
	&\quad +L(w)[\nabla w] (1-\abs{w}^2)\qquad\text{for }w\colon\R^2\to\R^2\text{ smooth,}
	\end{align*}
	where $L(w)$ is a linear form on $\R^{2\times 2}$ which depends smoothly on $w$. After applying this identity to the mollified map $w=m_\e$, we show that the right-hand side is a sum of terms which vanish as $\e\to 0$ in $\mathcal D'(\Omega)$, and terms which are bounded pointwise by $\mathcal P_\e$. Upon proving that $\mathcal P_\e$ is bounded in $L^{\frac 43}_{\loc}$, we are then able to conclude. These facts are a consequence of a preliminary regularity estimate: our map $m$ enjoys some Besov regularity, namely $m\in B^{\frac 13}_{4,\infty,\loc}$. This was proved in \cite{LP} and we recall it in Lemma~\ref{L1}. This Besov regularity directly implies  that $\mathcal P_\e$ is bounded in $L^{\frac 43}_{\loc}$, as shown in Lemma~\ref{L13}. Then the bounds on all above mentioned terms rely on pointwise commutator estimates. Specifically, we prove in Lemma~\ref{L5} that
	\begin{equation*}
	\left\vert \left[\Pi(m)\right]_\e -\Pi(m_\e)\right\vert \left\vert \nabla m_\e\right\vert \lesssim \mathcal P_\e(x)\qquad\text{for smooth maps }\Pi.
	\end{equation*}
	Choosing $\Pi=\abs{\cdot}^2$, this obviously enables us to estimate $L(m_\e)[\nabla m_\e](1-\abs{m_\e}^2)$. Moreover, remarking that the condition $\dv\Sigma_j(m)=0$ ensures that
	\begin{equation*}
	\dv\Sigma_j(m_\e) = \dv \left[ (\Sigma_j(m_\e)) -[\Sigma_j(m)]_\e \right],
	\end{equation*}
	we are also able to deal with the other terms, which are of the form
	\begin{align*}
	F_j(m_\e)\dv\Sigma_j(m_\e) &= \dv\left[ (F_j(m_\e) \left( (\Sigma_j(m_\e)) -[\Sigma_j(m)]_\e \right) \right] \\
	&\quad - DF_j(m_\e)[\nabla m_\e] \left((\Sigma_j(m_\e)) -[\Sigma_j(m)]_\e\right),
	\end{align*}
	for some smooth $F_j\colon \R^2\to \R$.
	The first term is easily seen to go to zero in $\mathcal D'(\Omega)$, while the second term is bounded by $\mathcal P_\e$ thanks to the pointwise commutator estimate with $\Pi=\Sigma_j$.
	
	Before turning to the full proof of Proposition~\ref{p:controldiv} we gather the intermediate results Lemmas~\ref{L1}, \ref{L13} and \ref{L5} below. In Lemma~\ref{L1} we recall from \cite{LP} the regularity $m\in B^{\frac 13}_{4,\infty,\loc}$. In Lemma~\ref{L13} we infer from this that $\PPI_\e$ is bounded in $L^{\frac 43}_{\loc}$. And in Lemma~\ref{L5} we establish  the above mentioned pointwise commutator estimates.

%
%

\begin{lem}[{\cite[Theorem~4]{LP}}]
	\label{L1} 
Let $m:\Omega\rightarrow\R^2$ satisfy \eqref{eq12}, then
	$m\in  B^{\frac{1}{3}}_{4,\infty, \loc}(\Omega)$.
\end{lem}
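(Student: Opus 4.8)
The plan is to deduce the Besov regularity of $m$ from a quantitative compensated‑compactness estimate for an associated gradient map, using the special algebraic form of the Jin--Kohn entropies only through a coercivity inequality for $K$. By Remark~\ref{rmk3} we may assume $\Omega$ is smooth, bounded and simply connected. Since $\Sigma_j(m)\in L^\infty(\Omega;\R^2)$ and $\dv\Sigma_j(m)=0$ in $\mathcal D'(\Omega)$, on the simply connected domain $\Omega$ there are Lipschitz functions $F_1,F_2$ with $\Sigma_j(m)=\nabla^\perp F_j$; with $F=(F_1,F_2)$ (up to a global sign) the definitions \eqref{eq1}--\eqref{eq2} give $DF(x)=P(\theta(x))\in K$ for a.e.\ $x\in\Omega$, where $m=e^{i\theta}$ a.e. A direct computation gives $|P'(\theta)|\equiv 1$, and $P$ is injective on $[0,2\pi)$, so $P$ parametrizes by arclength a smooth simple closed curve; in particular $|P(\alpha)-P(\beta)|\gtrsim\dist(\alpha-\beta,2\pi\Z)\gtrsim|e^{i\alpha}-e^{i\beta}|$ for all $\alpha,\beta$. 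Hence $|D^h m(x)|\lesssim|D^h DF(x)|$ pointwise, and it suffices to prove $DF\in B^{\frac13}_{4,\infty,\loc}(\Omega)$, i.e.\ $\|D^h DF\|_{L^4(U)}\lesssim_U|h|^{1/3}$ for all $U\subset\subset\Omega$ and $|h|$ small.

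The second ingredient is the coercivity of $K$ noted in the introduction: there is $c>0$ with $\det(A-B)\ge c\,|A-B|^4$ for all $A,B\in K$, an elementary (if lengthy) computation from \eqref{eq1}, in which the sign is essential and which plays the role of the ellipticity inequality $\det(A-B)\gtrsim|A-B|^2$ that fails here. With this in hand, fix $U\subset\subset\Omega$ and a cutoff $\phi\in C_c^\infty(\Omega)$ with $\one_U\le\phi\le 1$, and for $|h|<\dist(\supp\phi,\partial\Omega)$ set $G(x)=F(x+h)-F(x)$, so that $DG=D^h DF$, $\|G\|_{L^\infty(\supp\phi)}\le|h|\,\mathrm{Lip}(F)\lesssim|h|$ and $\|DG\|_{L^\infty}\lesssim 1$ since $K$ is bounded. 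Using the coercivity, $\phi\ge\one_U$, and the distributional identity $\det DG=-\dv(G_1\nabla^\perp G_2)$ (valid for $W^{1,\infty}$ maps),
\begin{align*}
c\int_U|D^h DF|^4\,dx &\le \int_\Omega\phi\,\det(D^h DF)\,dx = \int_\Omega G_1\,\nabla^\perp G_2\cdot\nabla\phi\,dx\\
&\le \|G_1\|_{L^\infty(\supp\phi)}\,\|\nabla\phi\|_{L^\infty}\int_{\supp\phi}|D^h DF|\,dx\\
&\lesssim |h|\,\|\nabla\phi\|_{L^\infty}\,|\supp\phi|^{3/4}\Big(\int_{\supp\phi}|D^h DF|^4\,dx\Big)^{1/4}.
\end{align*}
Taking $\supp\phi$ arbitrarily close to $U$ and using the trivial bound $|D^h DF|\lesssim 1$ on the right already yields $\int_U|D^hDF|^4\lesssim|h|$, i.e.\ the non‑sharp regularity $DF\in B^{\frac14}_{4,\infty,\loc}(\Omega)$.

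To upgrade the exponent from $\tfrac14$ to the sharp $\tfrac13$, I would bootstrap the displayed estimate. Choose nested open sets $U=U_0\subset\subset U_1\subset\subset\cdots$, all contained in a fixed $\Omega'\subset\subset\Omega$, with $\dist(U_k,\partial U_{k+1})\gtrsim 2^{-k}$, and cutoffs $\phi_k$ between $U_k$ and $U_{k+1}$ with $\|\nabla\phi_k\|_{L^\infty}\lesssim 2^k$. Writing $I_k=\int_{U_k}|D^h DF|^4$, the estimate reads $I_k\le C_k|h|\,I_{k+1}^{1/4}$ with $C_k\lesssim 2^k$, uniformly for $|h|<\dist(\Omega',\partial\Omega)$. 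Iterating $N$ times,
\begin{equation*}
I_0\le\Big(\prod_{k=0}^{N-1}C_k^{4^{-k}}\Big)\,|h|^{\sum_{k=0}^{N-1}4^{-k}}\,I_N^{4^{-N}},
\end{equation*}
where $\sum_{k\ge 0}4^{-k}=\tfrac43$, the product $\prod_k C_k^{4^{-k}}\lesssim 2^{\sum_k k\,4^{-k}}<\infty$ converges, and $I_N\le(2\,\mathrm{Lip}(F))^4|\Omega'|$ is bounded uniformly in $N$, so $I_N^{4^{-N}}\to 1$. Letting $N\to\infty$ gives $\int_U|D^h DF|^4\lesssim|h|^{4/3}$, hence $DF\in B^{\frac13}_{4,\infty,\loc}(\Omega)$, and then $m\in B^{\frac13}_{4,\infty,\loc}(\Omega)$ by the pointwise comparison above.

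The main obstacle is this bootstrap: a single use of the div--curl estimate yields only the non‑sharp exponent $\tfrac14$, and recovering $\tfrac13$ forces one to iterate over a nested family of subdomains while controlling the growth $C_k\sim 2^k$ of the cutoff constants — which works out precisely because the weights $4^{-k}$ attached to these constants decay geometrically fast and the geometric series of exponents sums to $\tfrac43$ rather than $1$. The other delicate point is establishing the coercivity inequality with a definite sign, which is exactly where the non‑ellipticity of $K$ is compensated by the special structure of $\Sigma_1,\Sigma_2$.
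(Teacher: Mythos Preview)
Your proposal is correct and follows the same overall strategy as the paper: construct the potential $F$ with $DF\in K$, invoke the coercivity $\det(A-B)\gtrsim|A-B|^4$ on $K$, write the determinant in divergence form, and integrate by parts against a cutoff. The only genuine difference is in how the estimate is closed.

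You use a single cutoff $\phi$ and land on $\int_U|D^hDF|^4\lesssim |h|\bigl(\int_{\supp\phi}|D^hDF|^4\bigr)^{1/4}$, where the domains on the two sides differ; this forces the iterated bootstrap over nested sets, which indeed converges because $\sum_k 4^{-k}=\tfrac43$ and $\sum_k k\,4^{-k}<\infty$. The paper instead tests against $\eta^2$: since $\nabla(\eta^2)=2\eta\,\nabla\eta$, the integration by parts leaves a factor of $\eta$ on the right-hand side, and H\"older (together with $|\eta|\le 1$) puts $\bigl(\int\eta^2|D^hm|^4\bigr)^{1/4}$ back on the right. Both sides now carry the \emph{same} weighted integral, so the inequality closes in a single step to $\bigl(\int\eta^2|D^hm|^4\bigr)^{3/4}\lesssim|h|$. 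Your bootstrap buys generality (it would work for any estimate of the shape $I(U)\lesssim |h|\,I(V)^{1/p}$ with $U\subset V$), while the $\eta^2$ trick buys brevity and avoids the nested-domain bookkeeping.
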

\begin{proof}[Proof of Lemma~\ref{L1}] This is essentially proved in \cite[Theorem~4]{LP}. For the reader's convenience we reproduce the argument here, adopting, for the sake of variety, a slightly different point of view. 
	
Since $\dv\Sigma_j(m)=0$, we infer that $\mathrm{curl}\lt(i\Sigma_j(m)\rt)=0$. Recall that from Remark \ref{rmk3} we are assuming $\Omega$ is simply connected, and thus there exists $F_j:\Omega\rightarrow \R$ with 
	\begin{equation}\label{eq13}
	\nabla F_j=i\Sigma_j(m)\qd\text{a.e.} 
	\end{equation}
 Since $\abs{m}=1$ a.e. we may choose $\theta\colon\Omega\to \R$ such that $m=e^{i\theta}$ a.e., and by definition \eqref{eq1}-\eqref{eq2} of $P$ and $K$  it follows that $F=(F_1,F_2)$ satisfies
\begin{equation*}
DF=\left( \begin{array}{c}
		i\Sigma_1(m) \\
		i\Sigma_2(m)
		\end{array}\right) =P(\theta)\in K \qquad\text{a.e.}
\end{equation*}

For any given $U\subset\subset\Omega$ and $h\in\R^2$ with $|h|$ sufficiently small, e.g. $|h|<\frac{1}{3}\mathrm{dist}(U,\partial\Omega)$, by \cite[Lemma~7]{LP} we have
\begin{equation*}
\det (DF(x+h)-DF(x))\geq c_0 \abs{DF(x+h)-DF(x)}^4
\end{equation*}
for some constant $c_0>0$ and a.e. $x\in\Omega$ with $\dist(x,\partial\Omega)>|h|$. By definition of $F$ in \eqref{eq13} we have
	\begin{equation*}
	\det (DF(\cdot +h)-DF) = (iD^h\Sigma_1(m))\cdot ( D^h\Sigma_2(m))\overset{\eqref{eq13}}{ =}D^h \nabla F_1\cdot D^h\Sigma_2(m),
	\end{equation*}
	and also
	\begin{equation*}
	\abs{D^h m(x)}\lesssim \abs{DF(x+h)-DF(x)}.
	\end{equation*}
	Hence gathering the three above equations, we obtain
	\begin{equation}\label{eq:detcontrolsDhm}
	\abs{D^h m}^4\lesssim D^h \nabla F_1\cdot D^h\Sigma_2(m) \qd\text{ for a.e. }x\in\Omega\text{ with }\mathrm{dist}(x,\partial\Omega)>|h|.
	\end{equation}
	Let $\eta\in C_c^\infty(\Omega)$ be a test function with $\mathrm{dist}(\supp\eta,\partial\Omega)>2h$ and $\one_U\leq\eta\leq\one_{\Omega}$. Integrating by parts and using that $\dv\Sigma_2(m)=0$ (and thus $\int_{\Omega} \na \lt(D^h F_1 \eta^2 \rt)\cdot D^h \Sigma_2(m) dx=0$), we have
	\begin{align*}
	\int_\Omega \eta^2 D^h \nabla F_1 \cdot D^h\Sigma_2(m) \, dx
	&=- \int_\Omega D^h F_1  D^h\Sigma_2(m) \cdot \nabla (\eta^2)\, dx\\
	&\lesssim \abs{h}\norm{\na F_1}_{L^\infty(\Omega)} \norm{\na\Sigma_2}_{L^\infty(\overline B_1)} \norm{\nabla\eta}_{L^\infty(\Omega)} \int_\Omega \abs{\eta}\abs{D^h m}\,dx \\
	&\lesssim \abs{h}\norm{\nabla\eta}_{L^\infty(\Omega)} \left(\int_\Omega \eta^2 \abs{D^h m}^4 \, dx\right)^{\frac 14}.
	\end{align*}
	Recalling \eqref{eq:detcontrolsDhm} we deduce
	\begin{equation*}
	\int_\Omega \eta^2\abs{D^h m}^4\, dx \lesssim \abs{h}\norm{\nabla\eta}_{L^\infty(\Omega)} \left(\int_\Omega \eta^2 \abs{D^h m}^4 \, dx\right)^{\frac 14},
	\end{equation*}
	and thus
	\begin{equation*}
	\left(\int_\Omega \eta^2 \abs{D^h m}^4 \, dx\right)^{\frac 14}\lesssim \abs{h}^{\frac 13}\norm{\nabla\eta}_{L^\infty(\Omega)}^{\frac 13}.
	\end{equation*}
	As $\one_U\leq\eta$,  it follows that
	\begin{equation*}
t^{-\frac{1}{3}}\sup_{\abs{h}\leq t}\norm{D^h m}_{L^4(U)}\lesssim\norm{\nabla\eta}_{L^\infty(\Omega)}^{\frac 13}
	\end{equation*}
for $t$ sufficiently small. For larger $t$ values, the boundedness of $m$ implies  that $t^{-\frac{1}{3}}\sup_{\abs{h}\leq t}\norm{D^h m}_{L^4(U)}$ is bounded. Thus $m\in B^{\frac 13}_{4,\infty}(U)$ for all $U\subset\subset\Omega$.
\end{proof}

%
%

\begin{lem} 
	\label{L13}
Given $m\in B^{\frac 13}_{4,\infty,\loc}(\Omega)$, let $\PPI_\e$ be as in \eqref{eq600}. Then for any $U\subset\subset\Omega$ and $\e$ sufficiently small we have
	\begin{equation}
	\label{eq601}
	\norm{\PPI_\e}_{L^{\frac 43}(U)}\leq\abs{m}_{B^{\frac 13}_{4,\infty}(U)}^{3}.
	\end{equation}
	In particular $\mathcal P_\e$ is bounded in $L^{\frac 43}_{\loc}(\Omega)$.
\end{lem}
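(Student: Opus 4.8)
The plan is to estimate the $L^{\frac 43}$ norm of $\PPI_\e$ directly from the definition \eqref{eq600} by interchanging the order of integration and recognizing a Besov seminorm. First I would write, for $U\subset\subset\Omega$ and $\e$ small enough that $B_\e(0)$-translates of $U$ stay inside $\Omega$,
\begin{equation*}
\norm{\PPI_\e}_{L^{\frac 43}(U)}^{\frac 43} = \int_U \lt(\e^{-1}\Xint{-}_{B_\e(0)}\abs{D^zm(x)}^3\,dz\rt)^{\frac 43}dx.
\end{equation*}
Applying Jensen's inequality to the average over $B_\e(0)$ with the convex function $s\mapsto s^{\frac 43}$ gives
\begin{equation*}
\lt(\Xint{-}_{B_\e(0)}\abs{D^zm(x)}^3\,dz\rt)^{\frac 43}\leq \Xint{-}_{B_\e(0)}\abs{D^zm(x)}^4\,dz,
\end{equation*}
so that
\begin{equation*}
\norm{\PPI_\e}_{L^{\frac 43}(U)}^{\frac 43}\leq \e^{-\frac 43}\int_U\Xint{-}_{B_\e(0)}\abs{D^zm(x)}^4\,dz\,dx = \e^{-\frac 43}\Xint{-}_{B_\e(0)}\norm{D^zm}_{L^4(U)}^4\,dz.
\end{equation*}

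Next I would bound the inner norm using the Besov seminorm: for $\abs{z}\leq\e$ we have $\norm{D^zm}_{L^4(U)}\leq \sup_{\abs{h}\leq\e}\norm{D^hm}_{L^4(U)}\leq \e^{\frac 13}\abs{m}_{B^{\frac 13}_{4,\infty}(U)}$, directly from the definition of $\abs{\cdot}_{B^{\frac 13}_{4,\infty}}$ as the $L^\infty$ norm in $t$ of $t^{-\frac 13}\sup_{\abs h\le t}\norm{D^hm}_{L^4}$. Hence $\norm{D^zm}_{L^4(U)}^4\leq \e^{\frac 43}\abs{m}_{B^{\frac 13}_{4,\infty}(U)}^4$ for every $\abs z\le\e$, and plugging this into the averaged integral above yields
\begin{equation*}
\norm{\PPI_\e}_{L^{\frac 43}(U)}^{\frac 43}\leq \e^{-\frac 43}\cdot\e^{\frac 43}\abs{m}_{B^{\frac 13}_{4,\infty}(U)}^4 = \abs{m}_{B^{\frac 13}_{4,\infty}(U)}^4,
\end{equation*}
which after taking the power $\frac 34$ gives exactly \eqref{eq601}. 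The final assertion that $\PPI_\e$ is bounded in $L^{\frac 43}_{\loc}(\Omega)$ is then immediate, since by Lemma~\ref{L1} the right-hand side $\abs{m}_{B^{\frac 13}_{4,\infty}(U)}$ is finite for every $U\subset\subset\Omega$, uniformly in $\e$.

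I do not expect any genuine obstacle here; this is a short computation. The only points requiring a little care are: (i) ensuring $\e$ is small enough that the difference quotients $D^zm(x)$ make sense on $U$ (i.e.\ that the ``otherwise'' clause in the definition of $D^zm$ does not spoil the estimate), which is why the statement restricts to $\e$ sufficiently small depending on $U$; and (ii) correctly tracking the powers of $\e$ through the normalization of the average $\Xint{-}_{B_\e(0)}$ and the factor $\e^{-1}$ in \eqref{eq600} — the bookkeeping works out precisely because $\frac 43\cdot\frac 13 = \frac 43 - 1\cdot\frac 43 \cdot 0$, or more simply because the $\e^{-\frac 43}$ from raising $\e^{-1}$ to the power $\frac 43$ exactly cancels the $\e^{\frac 13\cdot 4}=\e^{\frac 43}$ coming from the Besov scaling. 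One could alternatively avoid Jensen by using $\abs{D^zm}^3\le \sup_{\abs h\le \e}\abs{D^hm(x)}^3$ pointwise and then Hölder, but the Jensen route keeps all exponents integer-friendly and is cleanest.
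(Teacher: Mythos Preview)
Your proof is correct and follows essentially the same approach as the paper: Jensen's inequality on the $z$-average to pass from the cube to the fourth power, then bounding by the supremum over $\abs{z}\le\e$ of $\norm{D^z m}_{L^4(U)}^4$ and recognizing the Besov seminorm. One minor remark: you need not invoke Lemma~\ref{L1} for the final assertion, since $m\in B^{\frac13}_{4,\infty,\loc}(\Omega)$ is already the hypothesis of the lemma.
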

\begin{proof}[Proof of Lemma~\ref{L13}] Jensen's inequality implies
	\begin{align*}
	\int_{U} \lt(\PPI_{\ep}\rt)^{\frac{4}{3}} dx&\overset{\eqref{eq600}}{=} \ep^{-\frac{4}{3}}\int_{U} \lt(\Xint{-}_{B_{\ep}(0)} \lt|D^z m(x)\rt|^3 dz\rt)^{\frac{4}{3}}  dx\nn\\
	&\leq \ep^{-\frac{4}{3}}\int_{U}  \Xint{-}_{B_{\ep}(0)} \lt|D^z m(x)\rt|^4 dz dx\nn\\
	&\leq \e^{-\frac 43} \sup_{\abs{z}\leq\e}\int_U \abs{D^z m(x)}^4\, dx,
	\end{align*}
	which gives \eqref{eq601}.
\end{proof}

%
%

\begin{lem}
	\label{L5} 
	Let $m\colon\Omega\to\R^2$ be such that $\abs{m}\leq R$ a.e. for some $0<R<\infty$ and $\Pi\in C^2(\R^2;\R)$. For any $x\in\Omega$ such that $\overline B_\e(x)\subset\Omega$, 
	we have
	\begin{align}
	\label{eq16}
	&\lt|\lt[\Pi(m)\rt]_{\ep}(x)-\Pi(m_{\ep}(x))\rt| \lt|\na m_{\ep}(x)\rt|   \lesssim \norm{\nabla^2\Pi}_{L^\infty(\overline B_R)} \PPI_{\ep}(x),
	\end{align}
	where $\PPI_\e$ is as in \eqref{eq600}.
\end{lem}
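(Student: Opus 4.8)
The plan is to adapt the classical commutator estimate of Constantin--E--Titi \cite{titi}, paying attention to which norm of $\Pi$ enters and to the precise powers of $\ep$ and of the difference quotients of $m$. First I would record two elementary facts coming from the assumption $\overline B_\ep(x)\subset\Omega$: for every $|z|\le\ep$ both $x$ and $x+z$ belong to $\Omega$, so $D^z m(x)=m(x+z)-m(x)$ genuinely; and $m_\ep(x)$ is a convex combination of values of $m$ on $\overline B_\ep(x)$, so $|m_\ep(x)|\le R$. In particular $m(x)$, $m(x-z)$ (for $|z|\le\ep$) and $m_\ep(x)$ all lie in the \emph{convex} set $\overline B_R$, which is what will allow the Taylor remainders below to be estimated by $\|\na^2\Pi\|_{L^\infty(\overline B_R)}$.

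Next I would bound the commutator. Using $\int\rho_\ep=1$, write
\begin{equation*}
\left[\Pi(m)\right]_\ep(x)-\Pi(m_\ep(x))=\int\rho_\ep(z)\left(\Pi(m(x-z))-\Pi(m(x))\right)dz-\left(\Pi(m_\ep(x))-\Pi(m(x))\right).
\end{equation*}
Taylor expanding $\Pi$ about $m(x)$ with an integral remainder in each of the two terms, the two first-order contributions combine to $\na\Pi(m(x))\cdot(m_\ep(x)-m(x))$ and cancel (since $m_\ep(x)-m(x)=\int\rho_\ep(z)(m(x-z)-m(x))\,dz$), leaving two quadratic remainders. Since all relevant segments stay inside $\overline B_R$ these are controlled by $\|\na^2\Pi\|_{L^\infty(\overline B_R)}$, and Jensen's inequality absorbs $|m_\ep(x)-m(x)|^2$ into $\int\rho_\ep(z)|m(x-z)-m(x)|^2\,dz$. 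Using $\rho_\ep\lesssim\ep^{-2}\one_{B_\ep}$ and the symmetry of $B_\ep(0)$ this yields
\begin{equation*}
\left|\left[\Pi(m)\right]_\ep(x)-\Pi(m_\ep(x))\right|\lesssim\|\na^2\Pi\|_{L^\infty(\overline B_R)}\,\Xint{-}_{B_\ep(0)}|D^z m(x)|^2\,dz.
\end{equation*}
In the same spirit, using $\int\na\rho_\ep=0$ I would write $\na m_\ep(x)=\int(m(x-z)-m(x))\na\rho_\ep(z)\,dz$ and, from $|\na\rho_\ep|\lesssim\ep^{-3}\one_{B_\ep}$, deduce $|\na m_\ep(x)|\lesssim\ep^{-1}\Xint{-}_{B_\ep(0)}|D^z m(x)|\,dz$.

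Finally I would multiply the two bounds and apply Jensen's inequality on $B_\ep(0)$ equipped with normalized Lebesgue measure, namely $\Xint{-}_{B_\ep(0)}|D^z m|^2\,dz\le(\Xint{-}_{B_\ep(0)}|D^z m|^3\,dz)^{2/3}$ and $\Xint{-}_{B_\ep(0)}|D^z m|\,dz\le(\Xint{-}_{B_\ep(0)}|D^z m|^3\,dz)^{1/3}$, so that the product of the three averaged quantities is at most $\Xint{-}_{B_\ep(0)}|D^z m(x)|^3\,dz$. Multiplying by the leftover $\ep^{-1}$ gives exactly $\PPI_\ep(x)$ as defined in \eqref{eq600}, which is \eqref{eq16}.

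There is no serious obstacle here; the two points requiring care are (i) making sure the Taylor remainders are estimated with $\|\na^2\Pi\|_{L^\infty(\overline B_R)}$ rather than with a norm on a larger ball, which forces the bookkeeping above keeping all arguments of $\Pi$ inside the convex set $\overline B_R$; and (ii) noticing that the cubic average in $\PPI_\ep$ is precisely what is produced by pairing an $L^2$-type term (the quadratic commutator) with an $L^1$-type term ($\na m_\ep$) via Hölder with conjugate exponents $\tfrac32$ and $3$ — this is the structural reason the estimate lands exactly on $\PPI_\ep$ and not on some other average.
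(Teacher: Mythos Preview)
Your proof is correct and follows essentially the same route as the paper: a quadratic commutator bound in the spirit of Constantin--E--Titi combined with the elementary estimate on $\nabla m_\ep$ via $\int\nabla\rho_\ep=0$, then H\"older/Jensen with exponents $\tfrac32$ and $3$ to land on $\PPI_\ep$. The only cosmetic differences are that the paper Taylor-expands about $m_\ep(x)$ (and $m(x-z)$) rather than $m(x)$, and it first raises each factor to the power $\tfrac32$ (resp.\ $3$) before applying Jensen, whereas you apply Jensen directly to the averages; both organizations are equivalent.
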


\begin{proof}[Proof of Lemma~\ref{L5}]
Let $x\in\Omega$ be such that $\overline B_{\ep}(x)\subset\Omega$. The commutator estimate \eqref{eqg2} proved in Appendix~\ref{a:commut}, Lemma~\ref{L2.5} gives
	\begin{equation*}
	\lt|\lt[\Pi(m)\rt]_{\ep}(x)-\Pi(m_{\ep}(x))\rt|\lesssim\norm{D^2\Pi}_{L^\infty(\overline B_R)} \Xint{-}_{B_\e(0)}\abs{D^z m(x)}^2\, dz,
	\end{equation*}
	so Jensen's inequality and the definition \eqref{eq600} of $\PPI_\e$ imply
	\begin{align}
	\lt|\lt[\Pi(m)\rt]_{\ep}(x)-\Pi(m_{\ep}(x))\rt|^{\frac 32}
	&\lesssim\norm{D^2\Pi}_{L^\infty(\overline B_R)}^{\frac 32} \Xint{-}_{B_\e(0)}\abs{D^z m(x)}^3\, dz \nn\\
	&\overset{\eqref{eq600}}{=}  \norm{D^2\Pi}_{L^\infty(\overline B_R)}^{\frac 32} \e \PPI_\e(x).
	\label{eq:commut32}
	\end{align}
	To estimate $\nabla m_\e$ we compute, using the fact that $\nabla\rho_\e$ has zero average,
	\begin{align*}
	\nabla m_\e(x) & =\int_{B_\e(0)} m(x-z)\nabla\rho_\e(z)\, dz 
	= \int_{B_\e(0)}D^{-z}m(x)\nabla\rho_\e(z)\, dz. 
	\end{align*}
	Hence by Jensen's inequality and \eqref{eq600} again,
	\begin{align}
	\abs{\nabla m_\e(x)}^3\lesssim \e^{-3}\Xint{-}_{B_\e(0)}\abs{D^{-z}m(x)}^3\, dz\,=\, \e^{-2}\PPI_\e(x).\label{eq:nablameps3}
	\end{align}
	From \eqref{eq:commut32}-\eqref{eq:nablameps3} we gather
	\begin{align*}
	\lt|\lt[\Pi(m)\rt]_{\ep}(x)-\Pi(m_{\ep}(x))\rt| \lt|\na m_{\ep}(x)\rt|  
	&\lesssim \left(\norm{D^2\Pi}_{L^\infty(\overline B_R)}^{\frac 32} \e \PPI_\e(x) \right)^{\frac 23}\left( \e^{-2}\PPI_\e(x)\right)^{\frac 13} \\
	&\lesssim \norm{D^2\Pi}_{L^\infty(\overline B_R)} \PPI_\e(x).
	\end{align*}
\end{proof}


Equipped with Lemmas~\ref{L1}, \ref{L13} and \ref{L5} we can now prove Proposition~\ref{p:controldiv}.

\begin{proof}[Proof of Proposition~\ref{p:controldiv}] 
	First note that by Lemmas~\ref{L1} and \ref{L13} we have $m\in B^{\frac 13}_{4,\infty,\loc}(\Omega)$ and $\mathcal P_\e$ is bounded in $L^{\frac 43}_{\loc}(\Omega)$, hence it admits weakly converging subsequences.
	
	Then the strategy of the proof involves the convoluted map $m_\e$, for which we can use the chain rule to compute $\dv\Sigma_k(m_\e)$. Using some algebraic identities specific to the Jin-Kohn entropies and the commutator estimates of Lemma~\ref{L5} this enables us to control $\dv m_\e$ in terms of $\PPI_\e$.
	
	For any smooth function $w:\Omega\rightarrow \R^2$,
	\begin{align}
	\dv \Sigma_1(w)&\overset{\eqref{eqx10}}{=}\partial_1 w_{2}\lt(1-w_1^2-\frac{w_2^2}{3}\rt)+w_2\lt(-2 w_1 \partial_1 w_{1}-\frac{2}{3} w_2 \partial_1 w_{2}\rt)\nn\\
	&\qd+\partial_2 w_{1}\lt(1-w_2^2-\frac{w_1^2}{3}\rt)+w_1\lt(-2 w_2 \partial_2 w_{2}-\frac{2}{3} w_1 \partial_2 w_{1}\rt)\nn\\
	&=-2 w_1 w_2 \dv w+\lt(\partial_1 w_2+\partial_2 w_1\rt)\lt(1-\lt|w\rt|^2\rt)\label{eqx11},\\
	\dv \Sigma_2(w)&\overset{\eqref{eqx10}}{=}-\partial_1 w_1\lt(1-\frac{2}{3}w_1^2\rt)-w_1\lt(-\frac{4}{3}w_1 \partial_{1} w_1\rt)\nn\\
	&\qd\qd +\partial_2 w_2\lt(1-\frac{2}{3}w_2^2\rt)+w_2\lt(-\frac{4}{3}w_2 \partial_2 w_2\rt)\nn\\
	&=-\partial_1 w_1\lt(1-2 w_1^2\rt)+\partial_2 w_2\lt(1-2 w_2^2\rt)\nn\\
	&=-\partial_1 w_1\lt(1-2 w_1^2\rt)+\lt(\dv w- \partial_1 w_1\rt)\lt(1-2 w_2^2\rt)\nn\\
	&=-\partial_1 w_1\lt(2-2 w_1^2-2w_2^2\rt)+\dv w \lt(1-2 w_2^2\rt)\nn\\
	&=\lt(w_1^2-w_2^2\rt) \dv w+\lt(\partial_2 w_2- \partial_1 w_1 \rt)\lt(1-\lt|w\rt|^2\rt).\label{eqx12}
	\end{align}
Multiplying \eqref{eqx11} by $-2w_1w_2$, \eqref{eqx12} by $(w_1^2-w_2^2)$ and adding the resulting identities, we infer
\begin{align*}
	\dv w &= G_1(w)\dv\Sigma_1(w)  +G_2(w)\dv\Sigma_2(w)+L(w)[\nabla w] (1-\abs{w}^2),
\end{align*}
where
\begin{align*}
G_1(w)&=-2w_1w_2,\qquad G_2(w)=w_1^2-w_2^2,
\end{align*}
and
\begin{align*}
L(w)[\nabla w]&=(1+\abs{w}^2)\dv w + 2 w_1w_2 (\partial_1w_2+\partial_2 w_1) \\
&\quad +(w_2^2-w_1^2)(\partial_2w_2-\partial_1 w_1).
\end{align*}	
Note that $L(w)$ is a linear form on $\R^{2\times 2}$ which depends smoothly on $w$, and that $G_1$, $G_2$ are smooth functions of $w$.
We fix $\Omega'\subset\subset\Omega$ and apply this to $w=m_\e$. Thus for small enough $\e$, 
\begin{align}
\dv m_\e &= G_1(m_\e)\dv\Sigma_1(m_\e)  +G_2(m_\e)\dv\Sigma_2(m_\e) \nn\\
	&\quad +L(m_\e)[\nabla m_\e] (1-\abs{m_\e}^2) \qd\qd\text{ in }\Omega'.\label{eq:divmeps1}
\end{align}
Recalling that $\dv\Sigma_k(m)=0$ for $k\in\lbrace 1,2\rbrace$, we also have
\begin{equation*}
\dv \left( [\Sigma_k(m)]_\e\right) =\left[\dv\Sigma_k(m)\right]_\e =0\qd\text{ in }\Omega',
\end{equation*}
and may therefore (using also that $\abs{m}=1$ a.e.) rewrite \eqref{eq:divmeps1} as
\begin{align}
\dv m_\e & =A_1^\e +A_2^\e+B^\e +R_1^\e+R_2^\e,\label{eq:divmeps}
\end{align}
where
\begin{align}
A_k^\e &=  - DG_k(m_\e)[\nabla m_\e]\cdot\left( \Sigma_k(m_\e)- [\Sigma_k(m)]_\e \right)  \qquad\text{for }k=1,2,\nn\\
B^\e&=L(m_\e)[\nabla m_\e] (1-\abs{m_\e}^2) = L(m_\e)[\nabla m_\e] ([\abs{m}^2]_\e -\abs{m_\e}^2),  \nn\\
R_k^\e&= \dv \left[ G_k(m_\e)\left(\Sigma_k(m_\e)-[\Sigma_k(m)]_\e\right)\right]\qquad\text{for }k=1,2.\nn
\end{align}
For $k\in\lbrace 1,2\rbrace$, because $\Sigma_k$ is smooth and $m\in L^\infty(\Omega)$ 
we have $\Sigma_k(m_\e)\to \Sigma_k(m)$ and $[\Sigma_k(m)]_\e\to \Sigma_k(m)$ strongly in $L^p(\Omega')$ for any $p\in [1,\infty)$, and in particular
\begin{align*}
&\Sigma_k(m_\e)-[\Sigma_k(m)]_\e \\
&=\Sigma_k(m_\e)-\Sigma_k(m) + \Sigma_k(m)- [\Sigma_k(m)]_\e
\longrightarrow 0\qquad\text{strongly in }L^1(\Omega').
\end{align*}
Since $\abs{m_\e}\leq 1$ and $G_k$ is smooth this implies
\begin{equation}\label{eq:Rkto0}
R_k^\e = \dv \left[ G_k(m_\e)\left(\Sigma_k(m_\e)-[\Sigma_k(m)]_\e\right)\right] \longrightarrow 0\qquad\text{in }\mathcal D'(\Omega').
\end{equation}
Next we notice that  $A^\e_1 + A^\e_2+ B^\e$ in \eqref{eq:divmeps} is a sum  of terms of the form
\begin{equation*}
X^\e =T(m_\e)[\nabla m_\e]\left( \left[\Pi(m)\right]_\e -\Pi(m_\e)\right),
\end{equation*}
for some smooth functions $\Pi$ and linear forms $T(m_\e)$ depending smoothly on $m_\e$. Recalling again that $\abs{m_\e}\leq 1$, Lemma~\ref{L5} therefore ensures that
\begin{equation}\label{eq:boundAB}
\abs{A^\e_1 + A^\e_2+ B^\e}\lesssim \mathcal P_\e\qquad\text{in }\Omega'.
\end{equation}
Plugging \eqref{eq:boundAB} and \eqref{eq:Rkto0} into \eqref{eq:divmeps} and recalling that $m_\e\to m$ in $L^1(\Omega')$ we infer
\begin{align}
\left\vert \int_\Omega m\cdot\nabla \zeta \, dx \right\vert &= \liminf_{\e\to 0} \left\vert \int_\Omega m_{\e}\cdot\nabla \zeta \, dx \right\vert
 \nn\\
&
 \lesssim \liminf_{\e\to 0} \int_\Omega \abs{\zeta}\mathcal P_{\e}\, dx\qquad\text{for all }\zeta\in C_c^\infty(\Omega').\label{eq:bounddivm}
\end{align}

Thanks to Lemma~\ref{L13} we may choose a sequence $\e_n\to 0$ such that $\PPI_{\e_n}\rightharpoonup \PPI$ weakly in $L^{\frac 43}_{\loc}$.
Then \eqref{eq:bounddivm} gives
	\begin{equation*}
	\lt|\int_{\Omega} m  \cdot \na \zeta dx\rt|\lesssim  \int_{\Omega}  \PPI \lt|\zeta\rt| \; dx.
	\end{equation*}
	In particular
	\begin{equation*}
	\lt|\int_{\Omega} m  \cdot \na \zeta dx\rt|\lesssim \|\zeta\|_{ L^{4}(\Omega)}\|\PPI\|_{ L^{\frac{4}{3}}(\supp\zeta)}\qquad\text{ for any }\zeta\in C_c^{\infty}(\Omega),
	\end{equation*}
	which implies that $\dv m\in L^{\frac{4}{3}}_{\loc}(\Omega)$. Taking $\zeta=\chara_{U}*\rho_{\delta}$ and letting $\delta\rightarrow 0$ gives 
	\begin{equation*}
	\lt|\int_{U} \dv m \, dx\rt|\lesssim  \int_{\bar{U}}  \PPI dx\qd\text{ for all }U\subset \subset \Omega.
	\end{equation*}
	Now choosing $U=B_r(x)$ and letting $r\rightarrow 0$ we get \eqref{eq340} for all Lebesgue points of $\dv m$ and $\PPI$.  \end{proof}

\section{Control of  all entropy productions}\label{s:controlent}

Recall that an entropy is a smooth map $\Phi:\mathbb{S}^1\rightarrow \R^2$ that satisfies $e^{it}\cdot \frac{d}{dt}\Phi(e^{it})=0$ for all $t\in\R$.

\begin{prop}\label{p:controlent} 
	Let $m:\Omega\rightarrow\R^2$ satisfy \eqref{eq12}. 
 For any entropy $\Phi\in C^2(\mathbb{S}^1)$ we have $\dv\Phi(m)\in  L^{\frac 43}_{\loc}(\Omega)$ and
	\begin{equation}
	\label{eq621}
\abs{\dv\Phi(m)(x)}\lesssim \norm{\Phi}_{C^2(\mathbb{S}^1)}\mathcal P(x)\qquad
\text{for a.e. }x\in \Omega,
	\end{equation}
where $\PPI\in L^{\frac 43}_{\loc}(\Omega)$ is as in Proposition~\ref{p:controldiv}.
\end{prop}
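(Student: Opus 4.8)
\textbf{Proof strategy for Proposition~\ref{p:controlent}.}
The plan is to mimic the scheme used in the proof of Proposition~\ref{p:controldiv}: work with the mollified map $m_\e$, for which the chain rule is licit, derive an algebraic identity for $\dv\Phi(m_\e)$ in terms of $\dv m_\e$, $\dv\Sigma_1(m_\e)$, $\dv\Sigma_2(m_\e)$ and the failure of $\mathbb S^1$-valuedness $1-\abs{m_\e}^2$, and then pass to the limit using the commutator estimate of Lemma~\ref{L5} together with the already-established control $\abs{\dv m}\lesssim\PPI$. First I would record the following elementary fact about entropies: if $\Phi\in C^2(\mathbb S^1)$ is an entropy and $\wt\Phi\in C^2(\overline B_1;\R^2)$ is \emph{any} fixed $C^2$ extension with $\norm{\wt\Phi}_{C^2(\overline B_1)}\lesssim\norm{\Phi}_{C^2(\mathbb S^1)}$ (e.g.\ a cutoff times the 0-homogeneous or 1-homogeneous extension), then for smooth $w\colon\R^2\to\R^2$ the chain rule gives $\dv\wt\Phi(w)=D\wt\Phi(w)[\nabla w]$, and on $\abs{w}=1$ the entropy condition $e^{it}\cdot\frac{d}{dt}\Phi(e^{it})=0$ translates into the statement that the tangential part of $D\wt\Phi(w)[\nabla w]$ is controlled, so that $\dv\wt\Phi(w)$ equals a smooth-in-$w$ multiple of $\dv w$ plus terms that are either smooth-in-$w$ multiples of $\dv\Sigma_j(w)$ or carry a factor $(1-\abs w^2)$ times a linear form in $\nabla w$. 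Concretely, since any tangent vector $\partial_k w$ decomposes along $w^\perp$ and $w$ (the latter multiplied by $\frac12\partial_k\abs w^2$), one gets an identity of exactly the shape
\begin{align}\label{eq:entidentity}
\dv\wt\Phi(w) &= a^{\wt\Phi}(w)\,\dv w + b_1^{\wt\Phi}(w)\,\dv\Sigma_1(w) + b_2^{\wt\Phi}(w)\,\dv\Sigma_2(w)\nn\\
&\quad + L^{\wt\Phi}(w)[\nabla w]\,(1-\abs w^2),
\end{align}
where $a^{\wt\Phi}$, $b_j^{\wt\Phi}$ are smooth functions of $w$ and $L^{\wt\Phi}(w)$ is a linear form on $\R^{2\times2}$ depending smoothly on $w$, all with $C^0$ (resp.\ operator) norms on $\overline B_1$ bounded by $\lesssim\norm{\Phi}_{C^2(\mathbb S^1)}$. (The point is that the failure of the entropy identity off $\mathbb S^1$ is precisely what \eqref{eqx11}--\eqref{eqx12} exhibit for the model case $\Phi=\Sigma_j$; in general one subtracts off the right combination of $\Sigma_1,\Sigma_2$ and absorbs the rest into the $a^{\wt\Phi}\dv w$ and $(1-\abs w^2)$ terms using that the entropies span the relevant two-dimensional space of first-order obstructions. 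Alternatively one may just take the extension and write $\dv\wt\Phi(w)-a^{\wt\Phi}(w)\dv w - \sum_j b_j^{\wt\Phi}(w)\dv\Sigma_j(w)$ and check it vanishes on $\abs w=1$, hence is divisible by $1-\abs w^2$ with a smooth linear-in-$\nabla w$ cofactor by Taylor expansion.)

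Granting \eqref{eq:entidentity}, I would apply it with $w=m_\e$ on a fixed $\Omega'\subset\subset\Omega$. Using $\dv\Sigma_j(m)=0$, hence $\dv([\Sigma_j(m)]_\e)=0$, I rewrite $b_j^{\wt\Phi}(m_\e)\dv\Sigma_j(m_\e)$ exactly as in the proof of Proposition~\ref{p:controldiv}: it equals a divergence $\dv[b_j^{\wt\Phi}(m_\e)(\Sigma_j(m_\e)-[\Sigma_j(m)]_\e)]$ which tends to $0$ in $\mathcal D'(\Omega')$ (since $\Sigma_j(m_\e)-[\Sigma_j(m)]_\e\to0$ in $L^1_{\loc}$ and $b_j^{\wt\Phi}(m_\e)$ is bounded), minus $Db_j^{\wt\Phi}(m_\e)[\nabla m_\e]\cdot(\Sigma_j(m_\e)-[\Sigma_j(m)]_\e)$, which by Lemma~\ref{L5} with $\Pi=\Sigma_j$ is bounded pointwise by $\lesssim\norm{\Phi}_{C^2(\mathbb S^1)}\PPI_\e$. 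Likewise $L^{\wt\Phi}(m_\e)[\nabla m_\e](1-\abs{m_\e}^2)=L^{\wt\Phi}(m_\e)[\nabla m_\e]([\abs m^2]_\e-\abs{m_\e}^2)$ is a term of the form treated by Lemma~\ref{L5} with $\Pi=\abs\cdot^2$, hence $\lesssim\norm{\Phi}_{C^2(\mathbb S^1)}\PPI_\e$. Finally $a^{\wt\Phi}(m_\e)\dv m_\e$: here I use \eqref{eq:divmeps} from the proof of Proposition~\ref{p:controldiv}, which wrote $\dv m_\e = A_1^\e+A_2^\e+B^\e+R_1^\e+R_2^\e$ with $\abs{A_1^\e+A_2^\e+B^\e}\lesssim\PPI_\e$ and $R_k^\e\to0$ in $\mathcal D'(\Omega')$; multiplying by $a^{\wt\Phi}(m_\e)$ (bounded, and $\to a^{\wt\Phi}(m)$ a.e.) and noting $a^{\wt\Phi}(m_\e)R_k^\e = \dv[a^{\wt\Phi}(m_\e)G_k(m_\e)(\dots)] - \nabla(a^{\wt\Phi}(m_\e))\cdot G_k(m_\e)(\dots)$ splits it again into a $\mathcal D'$-null divergence plus a Lemma~\ref{L5}-bounded term. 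Collecting everything,
\begin{equation*}
\Big|\int_{\Omega}\wt\Phi(m)\cdot\nabla\zeta\,dx\Big| = \liminf_{\e\to0}\Big|\int_\Omega \wt\Phi(m_\e)\cdot\nabla\zeta\,dx\Big| \lesssim \norm{\Phi}_{C^2(\mathbb S^1)}\liminf_{\e\to0}\int_\Omega\abs\zeta\,\PPI_\e\,dx
\end{equation*}
for all $\zeta\in C_c^\infty(\Omega')$, and passing to the weakly convergent subsequence $\PPI_{\e_n}\rightharpoonup\PPI$ (same one as in Proposition~\ref{p:controldiv}) gives $|\int_\Omega\wt\Phi(m)\cdot\nabla\zeta|\lesssim\norm{\Phi}_{C^2(\mathbb S^1)}\int_\Omega\abs\zeta\,\PPI$. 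Since $\wt\Phi(m)=\Phi(m)$ a.e.\ (as $\abs m=1$), this says $\dv\Phi(m)\in L^{4/3}_{\loc}$ with $\abs{\dv\Phi(m)}\lesssim\norm{\Phi}_{C^2(\mathbb S^1)}\PPI$, and localizing $\zeta$ to small balls around Lebesgue points gives the pointwise bound \eqref{eq621}.

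\textbf{Main obstacle.} The only genuinely new ingredient beyond Section~\ref{s:controldiv} is establishing the structural identity \eqref{eq:entidentity} with constants controlled by $\norm{\Phi}_{C^2(\mathbb S^1)}$ — in particular making sure the chosen extension $\wt\Phi$ and the decomposition of $D\wt\Phi(w)[\nabla w]$ into the four listed pieces does not lose a derivative on $\Phi$ and that the cofactor multiplying $(1-\abs w^2)$ is honestly linear in $\nabla w$ with a $C^0(\overline B_1)$-bounded coefficient. Everything else is a direct transcription of the commutator/mollification machinery already set up for Proposition~\ref{p:controldiv}; in fact the whole computation can be phrased so that it only uses \eqref{eq:divmeps}, Lemma~\ref{L5}, and the weak limit $\PPI$ as black boxes. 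I would therefore spend the bulk of the writeup on \eqref{eq:entidentity}, and keep the limiting argument terse by referring back to the proof of Proposition~\ref{p:controldiv}.
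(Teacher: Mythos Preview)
Your overall strategy (mollify, use an algebraic identity for $\dv\wt\Phi(m_\e)$, bound via Lemma~\ref{L5} and pass to the limit) is right, but the structural identity \eqref{eq:entidentity} you propose is \emph{false} as written, and this is precisely the step you flag as the ``main obstacle''. The issue is your divisibility argument: you say the remainder ``vanishes on $\abs w=1$, hence is divisible by $1-\abs w^2$''. But $\dv\wt\Phi(w)$ is a linear form in $\nabla w$ with coefficients depending on $w$; saying it vanishes for $\mathbb S^1$-valued maps means it vanishes whenever $\nabla(\abs w^2)=0$, not merely when $\abs w=1$ at a point. Concretely, with the paper's extension one has $D\wt\Phi(z)=-2\Psi(z)\otimes z+\gamma(z)I$, so
\[
\dv\wt\Phi(w)=\gamma(w)\,\dv w+\Psi(w)\cdot\nabla(1-\abs w^2),
\]
and the last term is $-2\sum_j\Psi_j(w)\,w\cdot\partial_j w$, which does \emph{not} vanish at a single point with $\abs{w(x)}=1$ and is \emph{not} of the form $L(w)[\nabla w](1-\abs w^2)$. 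Since on $\abs w=1$ the forms $\dv w,\,\dv\Sigma_1(w),\,\dv\Sigma_2(w)$ are all scalar multiples of $\dv w$ (by \eqref{eqx11}--\eqref{eqx12}), no choice of $a,b_1,b_2$ can absorb the rank-one piece $-2\Psi(w)\otimes w$; your identity would force $D\wt\Phi(z)$ to be a multiple of the identity on $\mathbb S^1$, which fails for generic entropies.

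The fix, which is exactly what the paper does (following \cite{DeI}), is to keep the term $\Psi(m_\e)\cdot\nabla(1-\abs{m_\e}^2)$ and integrate it by parts against the test function $\zeta$: this produces $(1-\abs{m_\e}^2)\Psi(m_\e)\cdot\nabla\zeta\to 0$ plus $(1-\abs{m_\e}^2)\,D\Psi(m_\e)[\nabla m_\e]\,\zeta$, and the latter \emph{is} of the form handled by Lemma~\ref{L5}. No $b_j\,\dv\Sigma_j$ terms are needed at all. Your treatment of $a(m_\e)\dv m_\e$ by unpacking \eqref{eq:divmeps} again would work but is unnecessarily heavy: since Proposition~\ref{p:controldiv} already gives $\dv m_\e\to\dv m$ in $L^{4/3}_{\loc}$ and $\abs{\dv m}\lesssim\mathcal P$ a.e., one simply passes to the limit $\gamma(m_\e)\dv m_\e\to\gamma(m)\dv m$ and bounds pointwise by $\norm{\Phi}_{C^2}\mathcal P$.
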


	The proof of Proposition~\ref{p:controlent} is a refinement of the argument in \cite[Proposition~3]{DeI}, replacing their commutator estimates by the pointwise commutator estimates proved in Lemma~\ref{L5}, Section~\ref{s:controldiv}.

\begin{proof}[Proof of Proposition~\ref{p:controlent}] Let $\eta:\lt[0,\infty\rt)\rightarrow \R$ be a cut-off function with 
	$\eta=0$ on $\lt[0,\frac{1}{2}\rt]\cup \lt[2,\infty\rt)$, $\eta(1)=1$, and define
	\begin{equation*}
	\wt{\Phi}(z):=\eta\lt(\lt|z\rt|\rt) \Phi\lt(\frac{z}{\lt|z\rt|}\rt).
	\end{equation*}
	By Step 2 of the proof of \cite[Proposition~3]{DeI} we have that 
	\begin{equation*}
D\wt{\Phi}(z)=-2 \Psi(z)\otimes z+\gamma(z) Id\quad\text{ for every }z\in \R^2
	\end{equation*}
	where $Id$ is the identity matrix and 
	\begin{equation}
	\label{eq305}
	\gamma(z)=\frac{z^{\perp}\cdot D\wt{\Phi}(z) z^{\perp}}{\lt|z\rt|^2}\text{ and }
	\Psi(z)=\frac{-D\wt{\Phi}(z) z+\gamma(z)z}{2 \lt|z\rt|^2}.
	\end{equation}
	Note that 
	\begin{align}
	\label{eq311}
\lVert \gamma\rVert_{W^{1,\infty}(\R^2)}&\lesssim \lt(\|D^2\wt{\Phi}\|_{L^{\infty}(\R^2)}+\|D\wt{\Phi}\|_{L^{\infty}(\R^2)}\rt) \lesssim \norm{\Phi}_{C^2(\mathbb{S}^1)},
	\end{align}
	and hence 
	\begin{equation}
	\label{eq314}
	\|D\Psi\|_{L^{\infty}(\R^2)}\overset{\eqref{eq311}, \eqref{eq305}}{\lesssim} \norm{\Phi}_{C^2(\mathbb{S}^1)}.
	\end{equation}
	Exactly as in Step 3 of the proof of  \cite[Proposition~3]{DeI} we see that 
	\begin{equation}
	\label{eq306}
	\dv \wt{\Phi}(m_{\ep})=\Psi\lt(m_{\ep}\rt)\cdot \na \lt(1-\lt|m_{\ep}\rt|^2\rt)+\gamma\lt(m_{\ep}\rt) \dv m_{\ep}.
	\end{equation}
	Testing with $\zeta\in C^{\infty}_c(\Omega)$ we have 
	\begin{align}
	\label{eq685}
	&\int_{\Omega} \wt{\Phi}(m_{\ep})\cdot \na \zeta dx\nn\\
	&\overset{\eqref{eq306}}{=}\int_{\Omega} \lt(1-\lt|m_{\ep}\rt|^2\rt)\Psi(m_{\ep})\cdot \na \zeta dx \nn\\
	&\qd +\int_{\Omega} \lt(\Psi_{1,1}(m_{\ep})m_{\ep 1,1}+ \Psi_{1,2}(m_{\ep})m_{\ep 2,1}\rt. \nn\\
	&\qd\qd\qd\qd\qd\qd\qd\qd\qd\lt.    +\Psi_{2,1}(m_{\ep})m_{\ep 1,2} + \Psi_{2,2}(m_{\ep})m_{\ep 2,2}\rt)\lt(1-\lt|m_{\ep}\rt|^2\rt)\zeta\; dx\nn\\
	&\qd -\int_{\Omega} \gamma\lt(m_{\ep}\rt) \dv m_{\ep} \zeta \; dx.
	\end{align}
	Note that by Proposition \ref{p:controldiv}, $\dv m_{\e}\rightarrow\dv m$ in $L^{\frac{4}{3}}_{\loc}(\Omega)$. Thus, choosing  a sequence $\e_n$ such that $\PPI_{\e_n}\rightharpoonup \PPI$ weakly in $L^{\frac 43}_{\loc}$, we obtain
	\begin{align*}
	&\lt|\int_{\Omega}\Phi(m)\cdot\na\zeta dx\rt|=\lim_{n\rightarrow \infty}\lt|\int_{\Omega} \wt{\Phi}(m_{\ep_n})\cdot \na \zeta dx\rt|\nn\\
	&  \overset{\eqref{eq685}, \eqref{eq311}, \eqref{eq314}}{\lesssim} \norm{\Phi}_{C^2(\mathbb{S}^1)} \lim_{n\rightarrow \infty} \int_{\Omega} \lt|\na m_{\ep_n}\rt|\lt| 1-\lt|m_{\ep_n}\rt|^2\rt| \lt|\zeta\rt|\; dx\nn\\
	&\qd\qd\qd\qd\qd +\norm{\Phi}_{C^2(\mathbb{S}^1)}\lim_{n\rightarrow \infty} \int_{\Omega}  \lt|\dv m_{\ep_n}\rt| \lt|\zeta\rt| \; dx\nn\\
	& \overset{\eqref{eq16}, \eqref{eq340}}{\lesssim}  \norm{\Phi}_{C^2(\mathbb{S}^1)}  \int_{\Omega} \PPI|\zeta| \; dx\qd
	\text{ for all }\zeta\in C^{\infty}_c(\Omega).
	\end{align*}
	Since $\mathcal P\in L^{\frac 43}_{\loc}(\Omega)$ this implies that $\dv \Phi(m)\in L^{\frac{4}{3}}_{\loc}(\Omega)$. Further \eqref{eq621} follows from the above and the same arguments presented at the end of the proof of Proposition~\ref{p:controldiv}. 
\end{proof}

\begin{rem}\label{r:controlent}
Later on we are going to specialize to entropies $\Phi$ of the form
\begin{align*}
\Phi(e^{it})=\psi(t)e^{it} +\psi'(t)ie^{it},
\end{align*}
for some $\psi\in C^3(\mathbb S^1)\approx C^3(\mathbb R/2\pi\mathbb Z)$.
Then the control established in Proposition~\ref{p:controlent} becomes
\begin{align}\label{eq622}
\abs{\dv\Phi(m)(x)}\lesssim \mathcal P(x)\norm{\psi}_{C^3(\mathbb S^1)}\qquad\text{for a.e. }x\in\Omega.
\end{align}
\end{rem}

\section{Computation of harmonic entropy productions}\label{s:compent}

In \cite{mul2}, entropies were first defined as smooth maps $\Phi:\R^2\rightarrow\R^2$ that satisfy  $e^{it}\cdot \frac{d}{dt}\Phi(re^{it})=0$ for all $t\in\R$ and $r>0$. Such entropies can be obtained from smooth functions $\varphi$ via the formula
\begin{equation}\label{eq15}
\Phi^{\varphi}(z)=\varphi(z)z +((iz)\cdot\nabla\varphi(z))iz\qquad\forall z\in\R^2.
\end{equation}
In \cite{LP}, the second two authors introduced the notion of \emph{harmonic entropies}, which are entropies $\Phi$ given by harmonic functions $\varphi$ through \eqref{eq15}.  They enjoy nice factorization properties with respect to the Jin-Kohn entropies, and this fact was a major ingredient in \cite{LP}. 

 While the entropy production $\dv\Phi(m)$ only depends on the values of $\Phi$ on $\mathbb S^1$, for the purpose of  estimating $\dv\Phi(m_\e)$ one needs $\Phi$ to be extended outside $\mathbb S^1$ (as  in the proof of Proposition~\ref{p:controlent}). Since $\abs{m_\e}\leq 1$ it is however enough to specify values of $\Phi \in \overline B_1$ (rather than all of $\R^2$ as the entropies used in \cite{mul2,LP}).

 In this section, we use the nice factorization properties of harmonic entropies given by \eqref{eq15} for $\varphi\in C^3(\overline B_1)$ harmonic in $B_1$, and obtain an explicit pointwise expression for the associated entropy productions.

For any function $\psi\in L^2(\mathbb S^1)$ we denote by $E\psi$ its harmonic extension, that is,   $E\colon L^2(\mathbb S^1)\to L^2(B_1)$ is the continuous linear operator 
uniquely determined by its action on Fourier modes
\begin{align}\label{eq:E}
E\psi_k (re^{i\theta}) =r^{\abs{k}}e^{ik\theta},\qquad\text{for }\psi_k(\theta)=e^{ik\theta}.
\end{align}
In what follows we will use only the basic fact that $E$ is continuous from $C^4(\mathbb S^1)$ to $ C^3(\overline B_1)$ (see the proof of Lemma~\ref{l:Apsi}).

\begin{prop}\label{p:compent}
Let $m$ satisfy \eqref{eq12}.  
Given any $\psi\in C^4(\mathbb S^1)$ and $\varphi=E\psi$ its harmonic extension, the harmonic entropy given by
\begin{align*}
\Phi^{E\psi}(z)=\Phi^\varphi(z)=\varphi(z)z+((iz)\cdot \nabla\varphi(z))iz\qquad\forall z\in\overline B_1,
\end{align*}
satisfies
\begin{align}\label{eq:compent}
\dv\Phi^{E\psi}(m)(x)&=\mathcal A\psi(m(x))\dv m(x)\qquad\text{for a.e. }x\in\Omega,
\end{align}
where $\mathcal A\colon C^4(\mathbb S^1)\to C^0(\mathbb S^1)$ is the Fourier multiplier operator given by
\begin{align}
\label{eqreva12}
\mathcal A\psi_k =\frac{\abs{k}^3-2k^2-\abs{k}+2}{2}\psi_k,\qquad \psi_k(\theta)=e^{ik\theta}\text{ for all }k\in\mathbb Z.
\end{align}
\end{prop}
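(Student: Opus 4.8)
The plan is to prove the identity \eqref{eq:compent} first for smooth $\mathbb S^1$-valued maps, compute explicitly the linear operator $\mathcal A$ that appears, and then transfer the identity to our (only Besov-regular) map $m$ by mollification and commutator estimates in the spirit of Sections~\ref{s:controldiv} and \ref{s:controlent}. For the smooth computation I would start from the known algebraic factorization of harmonic entropies with respect to the Jin--Kohn entropies established in \cite{LP}, which yields an identity of the shape \eqref{sketch:identityharm}, i.e.\ for smooth $w\colon\R^2\to\mathbb S^1$,
\begin{align*}
\dv\Phi^{\varphi}(w)=A^{\varphi}(w)\dv w+F_1^{\varphi}(w)\dv\Sigma_1(w)+F_2^{\varphi}(w)\dv\Sigma_2(w),
\end{align*}
with $A^\varphi$, $F_j^\varphi$ smooth in $w$ and depending linearly on $\varphi$. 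Restricting to $\mathbb S^1$-valued $w$ and writing $w=e^{i\theta}$, the task is then to read off the multiplier $A^{E\psi}(e^{i\theta})$ as a function of $\psi$, which by linearity in $\psi$ is a Fourier multiplier; testing on $\psi_k(\theta)=e^{ik\theta}$ and using that $E\psi_k(re^{i\theta})=r^{|k|}e^{ik\theta}$, one computes $\Phi^{\varphi}$ explicitly from \eqref{eq15} and differentiates. The arithmetic here is routine but must be done carefully; the point is that it produces exactly the eigenvalue $(|k|^3-2k^2-|k|+2)/2$ in \eqref{eqreva12}. (A useful sanity check: $k=0,\pm1$ give eigenvalue $1$, consistent with the linear entropies $\Phi(z)=z$ and $\Phi(z)=iz$ reproducing $\dv m$ up to the obvious factor, and $k=\pm2$ should recover the Jin--Kohn combination appearing in \eqref{sketch:identity}.)

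Next I would pass from smooth $\mathbb S^1$-valued maps to our map $m$. Apply the smooth identity to $w=m_\e=m*\rho_\e$, which is smooth but only $\overline B_1$-valued, so the identity picks up extra terms multiplied by $1-|m_\e|^2=[\Pi(m)]_\e-\Pi(m_\e)$ with $\Pi=|\cdot|^2$, plus terms $F_j^\varphi(m_\e)\dv\Sigma_j(m_\e)$. For the latter, exactly as in the proof of Proposition~\ref{p:controldiv}, use $\dv\Sigma_j(m)=0$ to write $\dv\Sigma_j(m_\e)=\dv([\Sigma_j(m_\e)]-[\Sigma_j(m)]_\e)$ and integrate by parts against a test function $\zeta\in C_c^\infty$, splitting into a divergence term that tends to $0$ in $\mathcal D'$ (because $\Sigma_j(m_\e)-[\Sigma_j(m)]_\e\to 0$ in $L^1_{\loc}$) and a term $DF_j^\varphi(m_\e)[\nabla m_\e](\Sigma_j(m_\e)-[\Sigma_j(m)]_\e)$ which, by the pointwise commutator estimate Lemma~\ref{L5} with $\Pi=\Sigma_j$, is bounded by $\mathcal P_\e$. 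The $(1-|m_\e|^2)$-terms are handled identically using Lemma~\ref{L5} with $\Pi=|\cdot|^2$, so they too are $O(\mathcal P_\e)$ pointwise. On the other side, $\dv m_\e\to\dv m$ in $L^{4/3}_{\loc}$ by Proposition~\ref{p:controldiv} and $A^{E\psi}(m_\e)\to A^{E\psi}(m)=\mathcal A\psi(m)$ pointwise and boundedly (since $A^{E\psi}$ is continuous and $\psi\in C^4$ guarantees $E\psi\in C^3(\overline B_1)$), so $A^{E\psi}(m_\e)\dv m_\e\to \mathcal A\psi(m)\dv m$ in $L^1_{\loc}$. Finally $\Phi^{E\psi}(m_\e)\to\Phi^{E\psi}(m)$ in $L^1_{\loc}$, so passing to the limit $\e=\e_n\to0$ (along a subsequence with $\mathcal P_{\e_n}\rightharpoonup\mathcal P$) in $\int\Phi^{E\psi}(m_\e)\cdot\nabla\zeta$ and using that the error terms are controlled by $\mathcal P_\e$, we get $\dv\Phi^{E\psi}(m)=\mathcal A\psi(m)\dv m$ as an $L^1_{\loc}$ identity; a Lebesgue-point argument as at the end of the proof of Proposition~\ref{p:controldiv} upgrades it to the a.e.\ pointwise statement \eqref{eq:compent}.

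The main obstacle I expect is \emph{the smooth algebraic identity with its explicit multiplier}: establishing that the coefficient of $\dv\Sigma_j(w)$ in $\dv\Phi^\varphi(w)$ really is a smooth function $F_j^\varphi(w)$ of $w$ alone (no derivatives of $w$), which is what makes the commutator argument close, and then pinning down the constant $(|k|^3-2k^2-|k|+2)/2$. The cleanest route is probably to exploit the factorization of harmonic entropies from \cite{LP} together with the identity \eqref{sketch:identity} expressing $\dv w$ through $\dv\Sigma_1(w)$, $\dv\Sigma_2(w)$ for $\mathbb S^1$-valued $w$; writing $\varphi$ as a harmonic polynomial one mode at a time reduces everything to a finite explicit computation. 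A secondary subtlety is making sure all the error terms genuinely carry a factor of either a divergence that integrates out or a commutator $[\Pi(m)]_\e-\Pi(m_\e)$; since the entropy $\Phi^{E\psi}$ and the coefficients $F_j^\varphi$ are built from $E\psi$ which is $C^3$ up to the boundary, the regularity budget ($\nabla^2$ on the relevant nonlinearities, $\nabla m_\e$ absorbing one factor of $\mathcal P_\e^{1/3}$) is exactly the one used in Lemma~\ref{L5}, so this should go through verbatim.
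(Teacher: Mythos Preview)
Your overall plan (smooth factorization identity, apply to $m_\e$, pass to the limit) matches the paper's, but there is a real gap in the limit step: you treat the error terms as ``controlled by $\mathcal P_\e$'' and then claim this yields the exact identity. It does not. The family $\mathcal P_\e$ is only \emph{bounded} in $L^{4/3}_{\loc}$; it does not tend to $0$. So your decomposition
\[
F_j^{\varphi}(m_\e)\,\dv\Sigma_j(m_\e)=\dv\!\big[F_j^{\varphi}(m_\e)\big(\Sigma_j(m_\e)-[\Sigma_j(m)]_\e\big)\big]
-DF_j^{\varphi}(m_\e)[\nabla m_\e]\big(\Sigma_j(m_\e)-[\Sigma_j(m)]_\e\big),
\]
taken verbatim from the proof of Proposition~\ref{p:controldiv}, only yields a residual term pointwise bounded by $\mathcal P_\e$, hence in the limit an inequality $\big|\dv\Phi^{E\psi}(m)-\mathcal A\psi(m)\dv m\big|\lesssim C(\psi)\,\mathcal P$ with a constant depending on $\psi$ through third derivatives of $E\psi$. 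That is strictly weaker than \eqref{eq:compent}, and in fact useless for Section~\ref{s:proof}, where the whole point is to compare the \emph{exact} identity \eqref{eq:compent} with the $C^2$-bound \eqref{eq622}. The same objection applies to your treatment of the $(1-|m_\e|^2)$ terms via Lemma~\ref{L5}.

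The paper closes this gap with two structural observations you are missing. First, the smooth identity is derived directly for $\overline B_1$-valued maps (Lemma~\ref{l:compdivPhismooth}), and the harmonicity of $\varphi$ is used to force the entire $(1-|w|^2)$ contribution into a total divergence $\dv\!\big((|w|^2-1)B^\varphi(w)\big)$; this term then goes to $0$ in $\mathcal D'$ simply because $|m_\e|^2\to1$ in $L^1$, with no commutator estimate needed. Second, for $R_\e^j=F_j^\varphi(m_\e)\dv\Sigma_j(m_\e)$ the paper does \emph{not} integrate by parts: instead it observes that $\dv\Sigma_j(m_\e)$ is bounded in $L^{4/3}_{\loc}$ (this uses $\dv m\in L^{4/3}_{\loc}$ and Lemma~\ref{L5}) and tends to $0$ in $\mathcal D'$, hence $\dv\Sigma_j(m_\e)\rightharpoonup 0$ weakly in $L^{4/3}_{\loc}$; paired with the strong $L^4_{\loc}$ convergence $F_j^\varphi(m_\e)\to F_j^\varphi(m)$, this gives $R_\e^j\rightharpoonup 0$ in $L^1_{\loc}$. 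These two points are exactly what upgrades a bound to an equality.

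A minor side remark: your sanity check is off for $k=\pm1$, where $(|k|^3-2k^2-|k|+2)/2=0$, not $1$ (and likewise the multiplier vanishes at $k=\pm2$, consistent with $\Sigma_1,\Sigma_2$ having zero production under \eqref{eq12}). The eigenvalue $1$ at $k=0$ does correspond to $\Phi(z)=z$.
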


\begin{rem}\label{r:realvscomplex}
The Fourier multiplier operator $\mathcal A$ is naturally defined on complex-valued functions but in \eqref{eq:compent} only its restriction to real-valued functions plays a role.
\end{rem}

\begin{rem}\label{r:A}
The Fourier multiplier operator $\mathcal A$ defined by \eqref{eqreva12} is well-defined and continuous from $C^4(\mathbb S^1)$ to $C^0(\mathbb S^1)$. To show this, note that the Fourier coefficients of $\psi=\sum_k c_k(\psi)\psi_k\in C^4(\mathbb S^1)$ satisfy $k^4\abs{c_k(\psi)}=\abs{c_k(\psi^{(4)})}\in \ell^2$.
 Noting the fact that $\lt|c_k(\psi)\rt|\lt|\mathcal{A}(\psi_k)\rt|\lesssim \frac{1}{k} \lt|c_k(\psi)\rt| k^4 $ this implies, via Cauchy-Schwarz' inequality in $\ell^2$, that the Fourier
 series defining $\mathcal A\psi$ converges uniformly (so $\mathcal A$ is well-defined) and  gives the continuity estimate $\norm{\mathcal{A}\psi}_{C^0(\mathbb S^1)}\lesssim \abs{c_0(\psi)}+ \norm{\psi^{(4)}}_{L^2(\mathbb S^1)}\lesssim\norm{\psi}_{C^4(\mathbb S^1)}$. For finer boundedness properties of Fourier multiplier operators see e.g. \cite{grafakosclassical}.
\end{rem}

We split the proof of Proposition~\ref{p:compent} into Lemmas~\ref{l:compentharm} and \ref{l:Apsi}.
 The most crucial one is Lemma~\ref{l:compentharm}, where we rely on arguments from \cite{LP} to explicitly compute $\dv \Phi(m)$ for any harmonic entropy $\Phi$. 
Then in Lemma~\ref{l:Apsi}  we use this computation and the definition of the harmonic extension operator to obtain \eqref{eq:compent}.

\begin{lem}\label{l:compentharm}
Let $m$ satisfy \eqref{eq12}. 
	Let $\varphi\in  C^{3}(\overline B_1)$ be such that $\Delta\varphi=0$ in $B_1$ and $\Phi^{\varphi}$ be the corresponding harmonic entropy given by
	\begin{equation*}
\Phi^{\varphi}=\varphi(z)z+((iz)\cdot \nabla\varphi(z))iz\qquad\forall z\in\overline B_1.
	\end{equation*}
	Then we have
	\begin{align*}
	\dv\Phi^{\varphi}(m)& = A^\varphi(m) \dv m\qquad\text{a.e. in }\Omega,
	\end{align*}
	where $A^\varphi\in C^{0}(\overline B_1)$ is given by
	\begin{align}
	A^\varphi(z)&=\varphi(z) -z_1\partial_1\varphi(z) - z_2\partial_2\varphi(z) \nn\\
	&\quad + z_1z_2 \Big[ \partial_{12}\varphi(z)- z_2\partial_{111}\varphi(z) + z_1\partial_{211}\varphi(z)\Big] \nn\\
	&\quad +\frac 12 (z_1^2-z_2^2)\Big[ 
	\partial_{11}\varphi(z) + z_2\partial_{112}\varphi(z) + z_1\partial_{111}\varphi(z)
	\Big].\label{eq:Aphi}
	\end{align}
\end{lem}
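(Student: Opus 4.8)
The plan is to compute $\dv\Phi^{\varphi}(m)$ for smooth $\mathbb S^1$-valued maps via the chain rule, obtain an identity of the schematic form \eqref{sketch:identityharm}, and then transfer it to our (non-smooth) map $m$ using the commutator estimates of Section~\ref{s:controldiv}. First I would work out, for a \emph{smooth} map $w\colon\R^2\to\R^2$, the expansion of $\dv\Phi^{\varphi}(w)$ using the definition $\Phi^{\varphi}(z)=\varphi(z)z+((iz)\cdot\nabla\varphi(z))iz$. Writing $\Phi^\varphi = \varphi(z)z + (z_1\partial_2\varphi - z_2\partial_1\varphi)(-z_2,z_1)$ and differentiating, one collects the terms: those proportional to $\dv w$, and the remaining terms. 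The key algebraic input from \cite{LP} is that, because $\varphi$ is harmonic, the ``remaining'' terms organize themselves into a combination $F_1^\varphi(w)\dv\Sigma_1(w) + F_2^\varphi(w)\dv\Sigma_2(w) + (\text{terms multiplied by }1-|w|^2)$. Concretely I expect to use the expressions \eqref{eqx11}--\eqref{eqx12} for $\dv\Sigma_j(w)$ derived in the proof of Proposition~\ref{p:controldiv}, which express $\dv\Sigma_1(w)$ and $\dv\Sigma_2(w)$ in terms of $\dv w$, the ``off-diagonal'' combinations $\partial_1 w_2+\partial_2 w_1$, $\partial_2 w_2 - \partial_1 w_1$, and the factor $1-|w|^2$; inverting this on $\mathbb S^1$ lets one replace those derivative combinations by $\dv\Sigma_j(w)$ modulo $\dv w$ and $(1-|w|^2)$-terms. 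Harmonicity of $\varphi$ (i.e.\ $\partial_{11}\varphi = -\partial_{22}\varphi$ and consequently relations among third derivatives) is exactly what makes the coefficients match up so that only $\dv\Sigma_1$, $\dv\Sigma_2$, $\dv w$ and $(1-|w|^2)$ survive, with $A^\varphi$ as in \eqref{eq:Aphi}.

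Having the smooth identity
\[
\dv\Phi^\varphi(w) = A^\varphi(w)\dv w + F_1^\varphi(w)\dv\Sigma_1(w) + F_2^\varphi(w)\dv\Sigma_2(w) + L^\varphi(w)[\nabla w](1-|w|^2),
\]
valid for all smooth $w\colon\R^2\to\R^2$ with $A^\varphi$, $F_j^\varphi$, $L^\varphi$ depending smoothly on $w$ (and with $A^\varphi$ restricted to $\overline B_1$ being continuous — here I use that $E$ maps $C^4(\mathbb S^1)$ to $C^3(\overline B_1)$, so $\varphi\in C^3(\overline B_1)$ and $A^\varphi$ is built from derivatives of $\varphi$ up to order three), I would apply it with $w=m_\e$ on a fixed $\Omega'\subset\subset\Omega$ for $\e$ small. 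Then I reproduce exactly the mechanism of the proof of Proposition~\ref{p:controldiv}: using $\dv\Sigma_j(m)=0$ I write $F_j^\varphi(m_\e)\dv\Sigma_j(m_\e) = \dv[F_j^\varphi(m_\e)(\Sigma_j(m_\e) - [\Sigma_j(m)]_\e)] - DF_j^\varphi(m_\e)[\nabla m_\e]\cdot(\Sigma_j(m_\e) - [\Sigma_j(m)]_\e)$, where the divergence term tends to $0$ in $\mathcal D'(\Omega')$ because $\Sigma_j(m_\e) - [\Sigma_j(m)]_\e \to 0$ in $L^1$, and the second term, being of the form $T(m_\e)[\nabla m_\e]([\Pi(m)]_\e - \Pi(m_\e))$ with $\Pi=\Sigma_j$, is bounded pointwise by $\mathcal P_\e$ via Lemma~\ref{L5}. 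The term $L^\varphi(m_\e)[\nabla m_\e](1-|m_\e|^2) = L^\varphi(m_\e)[\nabla m_\e]([\,|m|^2]_\e - |m_\e|^2)$ is likewise bounded by $\mathcal P_\e$ (Lemma~\ref{L5} with $\Pi=|\cdot|^2$). Finally $A^\varphi(m_\e)\dv m_\e \to A^\varphi(m)\dv m$ in $L^1_{\loc}$, since $A^\varphi$ is continuous, $|m_\e|\le 1$, $m_\e\to m$ a.e., and $\dv m_\e\to \dv m$ in $L^{4/3}_{\loc}$ by Proposition~\ref{p:controldiv}. Passing to the limit along a subsequence with $\mathcal P_{\e_n}\rightharpoonup\mathcal P$ in $L^{4/3}_{\loc}$ yields, testing against $\zeta\in C_c^\infty(\Omega')$,
\[
\Big| \int_\Omega \big(\Phi^\varphi(m)\cdot\nabla\zeta + A^\varphi(m)\dv m\,\zeta\big)\,dx \Big| \lesssim \int_\Omega \mathcal P |\zeta|\,dx,
\]
but the left-hand side is in fact \emph{zero}: indeed replacing $\zeta$ by $\zeta_\delta = \zeta * \rho_\delta$-type localizations, or more simply noting that $\dv\Phi^\varphi(m) - A^\varphi(m)\dv m$, a priori a distribution, is bounded by $\mathcal P\in L^{4/3}_{\loc}$ against arbitrarily concentrated test functions, forces it to vanish. (Alternatively one argues as at the end of the proof of Proposition~\ref{p:controldiv}: the estimate holds with $\mathcal P$ replaced by $\mathcal P$ restricted to $\supp\zeta$ scaled to zero measure, giving $0$.) This gives $\dv\Phi^\varphi(m) = A^\varphi(m)\dv m$ a.e., as claimed.

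The main obstacle, and the part requiring real care, is the \textbf{algebraic computation} establishing the smooth identity with the precise coefficient $A^\varphi$ in \eqref{eq:Aphi} and confirming that all non-$\dv w$ terms are absorbed into $\dv\Sigma_1$, $\dv\Sigma_2$ and $(1-|w|^2)$-multiples. This is where harmonicity of $\varphi$ must be used in a nontrivial way: one differentiates $\Phi^\varphi$, which already contains first derivatives of $\varphi$, so $\dv\Phi^\varphi(w)$ involves $\varphi$ and its derivatives up to order three composed with $w$, times first derivatives of $w$; matching this against the templates \eqref{eqx11}--\eqref{eqx12} for $\dv\Sigma_j(w)$ (each of which carries its own derivatives of $w$) is a bookkeeping exercise in which the relations $\partial_{22}\varphi = -\partial_{11}\varphi$, $\partial_{122}\varphi = -\partial_{111}\varphi$, $\partial_{222}\varphi = -\partial_{112}\varphi$ are exactly what is needed for the coefficients of $\partial_1 w_2 + \partial_2 w_1$ and of $\partial_2 w_2 - \partial_1 w_1$ to combine correctly. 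I would organize this by first writing $\dv\Phi^\varphi(w)$ with the $\dv w$-part split off explicitly (this part, computed on $\mathbb S^1$, gives the first line and the bracketed structure of \eqref{eq:Aphi}), then showing the remainder equals $F_1^\varphi(w)\dv\Sigma_1(w) + F_2^\varphi(w)\dv\Sigma_2(w)$ plus $(1-|w|^2)$-terms, borrowing the relevant factorization identities from \cite{LP}. Everything else — the mollification/commutator machinery — is a faithful repetition of Section~\ref{s:controldiv} and should go through routinely.
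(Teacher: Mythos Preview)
Your overall strategy---derive a smooth identity for $\dv\Phi^\varphi(w)$ and transfer it to $m$ via mollification---is exactly right, and your handling of the algebraic computation and of the convergence $A^\varphi(m_\e)\dv m_\e\to A^\varphi(m)\dv m$ is fine. However, there is a genuine gap in the final step: your argument only yields the \emph{bound}
\[
\left|\int_\Omega \big(\Phi^\varphi(m)\cdot\nabla\zeta + A^\varphi(m)\,\dv m\;\zeta\big)\,dx\right|\lesssim\int_\Omega\mathcal P\,|\zeta|\,dx,
\]
i.e.\ $|\dv\Phi^\varphi(m)-A^\varphi(m)\dv m|\lesssim\mathcal P$ a.e., and not the claimed \emph{equality}. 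Your attempts to upgrade this to zero (``bounded against arbitrarily concentrated test functions forces it to vanish'', or shrinking $\supp\zeta$) do not work: an $L^{4/3}$ function bounded pointwise by $\mathcal P$ need not be zero. The reason this happens is that in your decomposition the terms $DF_j^\varphi(m_\e)[\nabla m_\e]\cdot\big(\Sigma_j(m_\e)-[\Sigma_j(m)]_\e\big)$ and $L^\varphi(m_\e)[\nabla m_\e](1-|m_\e|^2)$ are only \emph{bounded} by $\mathcal P_\e$ (Lemma~\ref{L5}); the regularity $m\in B^{1/3}_{4,\infty}$ is not strong enough to make such commutator terms vanish (that would require e.g.\ $W^{1/3,3}$ as in \cite{DeI}).

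The paper avoids this by organizing the smooth identity differently so that every remainder term genuinely tends to zero in $\mathcal D'$. Two changes are needed. First, write the $(1-|w|^2)$-contribution as a \emph{total divergence} $\dv\big((|w|^2-1)B^\varphi(w)\big)$ rather than a pointwise product $L^\varphi(w)[\nabla w](1-|w|^2)$; then $(|m_\e|^2-1)B^\varphi(m_\e)\to 0$ in $L^1$ and the divergence vanishes in $\mathcal D'$. Second, do \emph{not} decompose $F_j^\varphi(m_\e)\dv\Sigma_j(m_\e)$ via commutators; instead observe that $\dv\Sigma_j(m_\e)$ is bounded in $L^{4/3}_{\loc}$ (by $|\dv m_\e|+\mathcal P_\e$, using \eqref{eqx11}--\eqref{eqx12} and Lemma~\ref{L5}) and converges to $\dv\Sigma_j(m)=0$ in $\mathcal D'$, hence $\dv\Sigma_j(m_\e)\rightharpoonup 0$ weakly in $L^{4/3}_{\loc}$. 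Paired with the strong convergence $F_j^\varphi(m_\e)\to F_j^\varphi(m)$ in $L^4_{\loc}$, this gives $F_j^\varphi(m_\e)\dv\Sigma_j(m_\e)\rightharpoonup 0$ in $L^1_{\loc}$. With these two modifications every remainder converges to zero and the exact identity follows.
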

\begin{proof}[Proof of Lemma~\ref{l:compentharm}]
	The convoluted map $m_\e$ is smooth with values into $\overline B_1$, and a direct computation (to be found in Appendix~\ref{a:compdivPhismooth}, Lemma~\ref{l:compdivPhismooth}) shows that
	\begin{align}
	\dv\Phi^{\varphi}(m_\e) &= A^\varphi(m_\e)\dv m_\e + R^0_\e + R^1_\e +R^2_\e,\label{eq:divPhimeps},
	\end{align}
	where
	\begin{align}
	R^0_\e & =\dv((\abs{m_\e}^2-1) B^{\varphi}(m_{\ep})), \nn \\
	R^j_\e&= F_j^{\varphi}(m_\e) \dv\Sigma_j(m_\e) \qquad \text{for }j=1,2,\nn
	\end{align}
	where $B^{\varphi} \colon \overline B_1\to\R^2$ and $F_1^{\varphi},F_2^{\varphi}\colon \overline B_1\to \R$ are continuous and depend only on $\varphi$. 
	
	Since $m\in L^\infty$ we have $m_\e\to m$ in $L^p(\Omega)$ for all $p\in [1,\infty)$. In particular, $\Phi^{\varphi}$ being 
$C^2$, this implies  that $\Phi^{\varphi}(m_\e)\to \Phi^{\varphi}(m)$ in $L^1(\Omega)$ and therefore
	\begin{equation*}
	\dv\Phi^{\varphi}(m_\e)\longrightarrow \dv\Phi^{\varphi}(m)\qquad\text{in }\mathcal D'(\Omega).
	\end{equation*}
	Recall that $\dv m\in L^{\frac 43}_{\loc}$ by Proposition~\ref{p:controldiv}, thus $\dv m_\e\to\dv m$ in $L^{\frac 43}_{\loc}$. Since $A^\varphi\colon\overline B_1\to\R$ is continuous, $m_\e\to m$ a.e. and $\abs{m_\e}\leq 1$, by dominated convergence we have $A^\varphi(m_\e)\to A^\varphi(m)$ in $L^4$ and we deduce
	\begin{equation*}
	A^\varphi(m_\e)\dv m_\e \longrightarrow A^\varphi(m)\dv m\qquad\text{in }L^1_{\loc}(\Omega)\text{ and hence in }\mathcal D'(\Omega).
	\end{equation*}
	Similarly we have $(\abs{m_\e}^2-1)B^{\varphi}(m_{\ep})\to 0$ in $L^1(\Omega)$, and
	\begin{equation*}
	R^0_\e \longrightarrow 0\qquad\text{in }\mathcal D'(\Omega).
	\end{equation*}
	Hence to conclude the proof of Lemma~\ref{l:compentharm} it suffices to show that
	\begin{equation}\label{eq:Rjepsto0}
	R^j_\e\longrightarrow 0\qquad\text{in }\mathcal D'(\Omega)\quad\text{for }j=1,2,
	\end{equation}
	to pass to the limit in \eqref{eq:divPhimeps} and to use $\dv m\in L^{\frac{4}{3}}_{\loc}(\Omega)$. 
	
	The proof of \eqref{eq:Rjepsto0} follows the ideas of \cite[Section~6]{LP}, with  slight modifications.  It relies on two crucial ingredients: the vanishing of the Jin-Kohn entropy productions  $\dv\Sigma_j(m)=0$ for $j=1,2$; and the regularity  $m\in B^{\frac 13}_{4,\infty,\loc}$, as used also in Section~\ref{s:controldiv}. 
	
	Let $j\in\lbrace 1,2\rbrace$. Using the explicit expression of $\dv\Sigma_j(m_\e)$ obtained from \eqref{eqx11}-\eqref{eqx12}, we have
	\begin{align*}
	\abs{\dv\Sigma_j(m_\e)}\lesssim \abs{\dv m_\e} + \abs{\nabla m_\e}(1-\abs{m_\e}^2).
	\end{align*}
	Recall from Proposition~\ref{p:controldiv} that $\dv m\in L^{\frac 43}_{\loc}$, and from Lemma~\ref{L5} (applied to $\Pi=\abs{\cdot}^2$) that
	\begin{align*}
	\abs{\nabla m_\e}(1-\abs{m_\e}^2)\lesssim \mathcal P_\e.
	\end{align*}
	Since $\mathcal P_\e$ is bounded in $L^{\frac 43}_{\loc}$ (see Lemma~\ref{L13}), we deduce from the above that  $\dv\Sigma_j(m_\e)$ is bounded in $L^{\frac 43}_{\loc}$. Because we also have $\dv\Sigma_j(m_\e)\to\dv\Sigma_j(m)=0$ in $\mathcal D'(\Omega)$, we infer
		\begin{equation*}
		\dv \Sigma_j(m_\e)\rightharpoonup 0\qquad \text{ in } L^{\frac{4}{3}}_{\loc}(\Omega).
		\end{equation*}
		Combined with the fact that $F_j^\varphi(m_\e)$ converges strongly to $F_j^\varphi(m)$ in $L^{4}_{\loc}(\Omega)$, this implies (invoking e.g. \cite[Proposition~3.5(iv)]{brezis})
		\begin{equation*}
		F_j^\varphi(m_\e)\dv \Sigma_j(m_\e)\rightharpoonup 0\qquad\text{in }L^1_{\loc}(\Omega),
		\end{equation*}
		which proves \eqref{eq:Rjepsto0}. 
\end{proof}

The second step towards Proposition~\ref{p:compent} is to obtain an expression of $A^{\varphi}$ in terms of the Fourier coefficients of $\varphi_{\lfloor \mathbb S^1}$. While Lemma~\ref{l:compentharm} naturally defines $A^\varphi$ for real-valued functions $\varphi$ (which give rise to $\R^2$-valued entropies $\Phi^\varphi$), formula \eqref{eq:Aphi} also makes sense for complex-valued functions and in what follows it is convenient to consider $A^\varphi$ to be extended to complex-valued functions $\varphi$ through \eqref{eq:Aphi}.

\begin{lem}\label{l:Apsi}
The operator
\begin{align}
\label{eq:defApsi}
\mathcal A\colon C^4(\mathbb S^1)\longrightarrow C^0(\mathbb S^1),\quad \psi\longmapsto A^{E\psi}_{\lfloor \mathbb S^1}
\end{align}
is the Fourier multiplier operator 
characterized by
\begin{align}\label{eq:Apsifourier}
\mathcal A\psi_k =\frac{\abs{k}^3-2k^2-\abs{k}+2}{2}\psi_k,\qquad \text{ for all }k\in\mathbb Z,
\end{align}
where $\psi_k(\theta)=e^{ik\theta}$.
\end{lem}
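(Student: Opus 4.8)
The plan is to reduce, by linearity, to an elementary computation on the Fourier modes $\psi_k(\theta)=e^{ik\theta}$, $k\in\mathbb Z$. First I would check that the operator in \eqref{eq:defApsi} is well defined and bounded by factoring it as the harmonic extension $E$ followed by the linear map $\varphi\mapsto A^\varphi_{\lfloor\mathbb S^1}$. The latter is clearly bounded from $C^3(\overline B_1)$ to $C^0(\mathbb S^1)$ by inspection of \eqref{eq:Aphi} (it is a sum of products of monomials in $z_1,z_2$, bounded on $\overline B_1$, with partial derivatives of $\varphi$ of order $\le 3$, followed by restriction to $\mathbb S^1$). For the former, given $\psi=\sum_kc_k(\psi)\psi_k\in C^4(\mathbb S^1)$, the identity $c_k(\psi^{(4)})=k^4c_k(\psi)$ together with Parseval gives $(|k|^4c_k(\psi))_k\in\ell^2$, hence by Cauchy--Schwarz $\sum_k(1+|k|)^3|c_k(\psi)|\lesssim\|\psi\|_{C^4(\mathbb S^1)}$. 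In the identification $\R^2\cong\mathbb C$ one has, by \eqref{eq:E}, $E\psi_k(re^{i\theta})=r^{|k|}e^{ik\theta}$, which is the holomorphic polynomial $z^k$ for $k\ge 0$ and the antiholomorphic polynomial $\bar z^{|k|}$ for $k<0$; in either case $\|E\psi_k\|_{C^3(\overline B_1)}\lesssim(1+|k|)^3$. Therefore $\sum_kc_k(\psi)E\psi_k$ converges absolutely in $C^3(\overline B_1)$ and a routine uniqueness-of-limits argument (the series also converges in $L^2(B_1)$, where by \eqref{eq:E} it equals $E\psi$) identifies its sum as $E\psi$; in particular $E$ is bounded from $C^4(\mathbb S^1)$ to $C^3(\overline B_1)$, as claimed before the statement. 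Applying the bounded operator $\varphi\mapsto A^\varphi_{\lfloor\mathbb S^1}$ termwise then yields
\begin{equation*}
\mathcal A\psi=\sum_{k\in\mathbb Z}c_k(\psi)\,A^{E\psi_k}_{\lfloor\mathbb S^1},
\end{equation*}
so it suffices to show $A^{E\psi_k}_{\lfloor\mathbb S^1}=\sigma(k)\psi_k$ with $\sigma(k):=\frac{|k|^3-2k^2-|k|+2}{2}$ for each $k\in\mathbb Z$.

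For $k\ge 0$ I would substitute $\varphi=z^k$ into \eqref{eq:Aphi} and simplify using the Cauchy--Riemann relations $\partial_2\varphi=i\partial_1\varphi$, which give $\partial_{12}\varphi=i\partial_{11}\varphi$ and $\partial_{211}\varphi=\partial_{112}\varphi=i\partial_{111}\varphi$, together with $\partial_1\varphi=kz^{k-1}$, $\partial_{11}\varphi=k(k-1)z^{k-2}$, $\partial_{111}\varphi=k(k-1)(k-2)z^{k-3}$. The zeroth/first order part collapses through $z_1\partial_1\varphi+z_2\partial_2\varphi=(z_1+iz_2)\varphi'=z\varphi'=kz^k$, so $\varphi-z_1\partial_1\varphi-z_2\partial_2\varphi=(1-k)z^k$. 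Using $z_1+iz_2=z$ and $-z_2+iz_1=iz$ one checks that both bracketed groups in \eqref{eq:Aphi} reduce to a multiple of $z^{k-2}$, namely
\begin{equation*}
\partial_{12}\varphi-z_2\partial_{111}\varphi+z_1\partial_{211}\varphi=ik(k-1)^2z^{k-2},\qquad \partial_{11}\varphi+z_2\partial_{112}\varphi+z_1\partial_{111}\varphi=k(k-1)^2z^{k-2}.
\end{equation*}
Since $\tfrac12 z^2=\tfrac12(z_1^2-z_2^2)+iz_1z_2$, the two corresponding terms of \eqref{eq:Aphi} combine to $\tfrac12 z^2\cdot k(k-1)^2z^{k-2}=\tfrac12 k(k-1)^2z^k$, whence
\begin{equation*}
A^{z^k}(z)=\Big((1-k)+\tfrac12 k(k-1)^2\Big)z^k=\frac{k^3-2k^2-k+2}{2}\,z^k\qquad\text{on }\overline B_1,
\end{equation*}
and restricting to $\mathbb S^1$ (so $z=e^{i\theta}$, $z^k=\psi_k$) gives $A^{E\psi_k}_{\lfloor\mathbb S^1}=\sigma(k)\psi_k$ because $|k|=k$.

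For $k<0$ I would avoid repeating the computation by noting that all coefficients in \eqref{eq:Aphi} are real on $\R^2$, so $\varphi\mapsto A^\varphi$ commutes with complex conjugation; since $E\psi_k=\overline{E\psi_{-k}}=\overline{z^{-k}}$ with $-k>0$, the previous step gives $A^{E\psi_k}=\overline{A^{E\psi_{-k}}}=\overline{\sigma(-k)z^{-k}}=\sigma(-k)\bar z^{-k}$, where we used $\sigma(-k)\in\R$. Restricting to $\mathbb S^1$ (where $\bar z^{-k}=e^{ik\theta}$) yields $\sigma(-k)e^{ik\theta}=\sigma(k)\psi_k$, since the symbol $\sigma$ is even. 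Together with the case $k\ge 0$ this establishes \eqref{eq:Apsifourier} for all $k\in\mathbb Z$ and completes the proof.

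The only step requiring genuine care is the middle paragraph: consolidating the second- and third-order derivative terms of \eqref{eq:Aphi} into the clean form $k(k-1)^2z^{k-2}$, and in particular applying the Cauchy--Riemann relations consistently (noting that $\partial_{211}\varphi$ and $\partial_{112}\varphi$ coincide and equal $i\partial_{111}\varphi$). The remaining ingredients — the $\ell^2$ tail bound, the termwise passage to the limit, and the conjugation symmetry for negative modes — are routine.
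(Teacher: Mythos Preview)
Your proof is correct and follows essentially the same route as the paper: establish continuity of $\mathcal A$ from $C^4(\mathbb S^1)$ to $C^0(\mathbb S^1)$ via the factorization through $E$ and \eqref{eq:Aphi}, reduce by density to Fourier modes, compute $A^{z^k}$ explicitly for $k\ge 0$ using $\partial_2\varphi=i\partial_1\varphi$ and $z_1+iz_2=z$, and handle $k<0$ by complex conjugation. The only cosmetic difference is that you treat all $k\ge 0$ at once (the coefficients $k(k-1)$, $k(k-1)(k-2)$ kill the formally negative powers of $z$), whereas the paper writes out $k\ge 3$ and remarks that $0\le k\le 2$ are similar.
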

\begin{proof}[Proof of Lemma~\ref{l:Apsi}]
The linear operator $\mathcal A$ defined via \eqref{eq:defApsi} is continuous from $C^4(\mathbb S^1)$ to $C^0(\mathbb S^1)$ thanks to the bounds
\begin{align*}
\norm{\mathcal A\psi}_{C^0(\mathbb S^1)} \lesssim \norm{E\psi}_{C^3(\overline B_1)}\lesssim \norm{\psi}_{C^4(\mathbb S^1)}.
\end{align*}
The first inequality follows directly from the explicit definition of $A^\varphi$ \eqref{eq:Aphi}.
The second is a consequence of $\norm{ E\psi_k}_{C^3(\overline B_1)}\lesssim 1+\abs{k}^3$  and of the estimates in Remark~\ref{r:A}. Moreover, as shown in Remark~\ref{r:A}, the Fourier multiplier operator $\mathcal B$ defined by the right-hand side of \eqref{eq:Apsifourier} is continuous from $C^4(\mathbb S^1)$ to $C^0(\mathbb S^1)$. The two linear continuous operators $\mathcal A$ and $\mathcal B$ agree on $C^4(\mathbb S^1)$ provided they agree on the Fourier modes $\psi_k$, whose linear span is dense. Hence we only need to show \eqref{eq:Apsifourier} for each fixed $k\in\mathbb Z$.

For $k\geq 1$ the harmonic extension of $\psi_k$ is $E\psi_k=\varphi_k$ \eqref{eq:E} where $\varphi_k(z)=z^k$ (here we identify $\R^2$ and $\mathbb C$), so $\partial_1\varphi_k=k\varphi_{k-1}$ and $\partial_2\varphi_k=ik\varphi_{k-1}$. 
For $k\geq 3$ we may thus compute
\begin{align*}
A^{\varphi_k}&=\varphi_k -k\left(z_1\varphi_{k-1}+iz_2\varphi_{k-1}\right) \\
&\quad +z_1z_2\left(ik(k-1)\varphi_{k-2}-z_2k(k-1)(k-2)\varphi_{k-3}+iz_1 k(k-1)(k-2)\varphi_{k-3}\right)\\
&\quad +\frac 12 (z_1^2-z_2^2)\left(k(k-1)\varphi_{k-2}+iz_2k(k-1)(k-2)\varphi_{k-3}+z_1k(k-1)(k-2)\varphi_{k-3}\right) \\
& =(1-k)\varphi_k +iz_1z_2 k(k-1)^2\varphi_{k-2} +\frac 12 (z_1^2-z_2^2)k(k-1)^2\varphi_{k-2}.
\end{align*}
To obtain the last equality we used that
\begin{align*}
(z_1+iz_2)\varphi_{j-1}=z\varphi_{j-1}=\varphi_j\quad\text{for }j =k\text{ and }k-2.
\end{align*}
Computing further and using that $z^2\varphi_{k-2}=\varphi_k$ we find
\begin{align*}
A^{\varphi_k}&= (1-k)\varphi_k +\frac{1}{2}k(k-1)^2(z_1+iz_2)^2\varphi_{k-2} \\
&=\left(1 -k+\frac 12 k(k-1)^2\right)\varphi_k 
=\frac{k^3-2k^2-k+2}{2}\varphi_k.
\end{align*}
Since $\varphi_k=E\psi_k$ this proves \eqref{eq:Apsifourier} for $k\geq 3$. Similar computations show that \eqref{eq:Apsifourier} is valid for $0\leq k\leq 2$. For $k<0$ we obtain \eqref{eq:Apsifourier} by remarking that $\varphi_{k}$ is the complex conjugate of $\varphi_{-k}$ and therefore $A^{\varphi_k}$ is the complex conjugate of $A^{\varphi_{-k}}$.
\end{proof}

Proposition~\ref{p:compent} follows directly from Lemmas~\ref{l:compentharm} and \ref{l:Apsi}.

\section{Proof of Theorems~\ref{TT0} and \ref{TT1}}\label{s:proof}

	The proof of Theorem \ref{TT0} reduces to showing $\dv m=0$, which then allows us to invoke the main theorem in \cite{LP} to conclude the rigidity of $m$. To this end we compare, on the one hand the pointwise control 
	\begin{equation*}
	\lt|\dv\Phi^{E\psi}(m)(x)\rt|\lesssim\norm{\psi}_{C^3}\mathcal P(x),
	\end{equation*}
	obtained in Proposition~\ref{p:controlent} (see also Remark~\ref{r:controlent}), and on the other hand the explicit expression
	\begin{equation*}
	\dv\Phi^{E\psi}(m)(x)=\mathcal A\psi (m(x)) \dv m(x),
	\end{equation*}
	obtained in Proposition~\ref{p:compent} for some explicit Fourier multiplier operator $\mathcal A$. We show indeed that this operator can not satisfy a bound of the form $\vert\mathcal A\psi(z_0)\vert\lesssim \norm{\psi}_{C^3(\mathbb S^1)}$ for any $z_0\in\mathbb S^1$, because this would imply that the Hilbert transform $\mathcal H$ on the circle (or conjugate function transform), defined on Fourier modes $\psi_k=e^{ik\theta}$ by $\mathcal H\psi_k=-i\,\mathrm{sign}(k)\psi_k$, 
is bounded from $C^0(\mathbb S^1)$ into $L^\infty(\mathbb S^1)$, and this is known to be wrong.
 Therefore, the only way for the pointwise control of $\dv\Phi^{E\psi}(m)$ in terms of $\norm{\psi}_{C^3}$ to be valid is that $\dv m=0$. In addition to this basic argument, some technicalities enter the game due to the fact that the pointwise control and explicit expression of Propositions~\ref{p:controlent} and \ref{p:compent} are \enquote{almost everywhere} statements and the set of points $x$ at which they are valid depends in principle on $\psi$. We classically circumvent this by arguing on countable families of $\psi$ with appropriate density properties.

\begin{proof}[Proof of Theorem~\ref{TT0}]
	Let $\mathcal X \subset C^4(\mathbb S^1)$ denote a countable dense subset. Let $\mathcal G\subset\Omega$ be the set of all points $x\in\Omega$ at which  $\PPI(x)<\infty$, and both:
	\begin{itemize}
		\item the explicit expression 
		\begin{align*}
		\dv\Phi^{E\psi}(m)(x)=\mathcal A\psi(m(x))\dv m(x),
		\end{align*}
		given by \eqref{eq:compent},
		\item its pointwise control 
\begin{align*}
\abs{\dv\Phi^{E\psi}(m)(x)}\lesssim \mathcal P(x)\norm{\psi}_{C^3(\mathbb S^1)},
\end{align*}		
		 provided by \eqref{eq622},
	\end{itemize}
are satisfied  for all $\psi\in\mathcal X$. Thanks to Proposition~\ref{p:compent} and Proposition~\ref{p:controlent}, the set $\mathcal G$ is a countable intersection of sets of full measure, and therefore $|\Omega\setminus \mathcal G|=0$. 
	
	We claim that $\dv m(x)=0$ for all $x\in\mathcal G$. Assume by contradiction that $\dv m(x_0)\ne 0$ for some $x_0\in\mathcal G$. By definition of $\mathcal G$ we have
	\begin{align}\label{eq:estimx0}
	\abs{\mathcal A \psi(m(x_0)) \dv m (x_0)}&=\abs{\dv\Phi^{E\psi}(m)(x_0)}\leq C \norm{\psi}_{C^3(\mathbb S^1)},
	\end{align}
	for all $\psi\in\mathcal X$ and some constant $C=C(x_0,m)>0$. 
	
 In what follows we identify $\mathbb S^1$ with $\R/2\pi\mathbb Z$, functions on $\mathbb S^1$ with $2\pi$-periodic functions on $\mathbb R$, and $m(x_0)\in\mathbb S^1$ with its argument $\theta_0\in\mathbb R/2\pi\mathbb Z$.
 Dividing \eqref{eq:estimx0}  by $\dv m(x_0)\neq 0$ we deduce
	\begin{align*}
	\abs{\mathcal A\psi (\theta_0)}\leq \widetilde C \norm{\psi}_{C^3(\mathbb S^1)}
	\end{align*}
	for some other constant $\widetilde C=\widetilde C(x_0,m)>0$ and all $\psi\in\mathcal X$.
Since $\mathcal X$ is dense in $C^4(\mathbb S^1)$ and both sides of the above estimate depend continuously on $\psi$ in the $C^4$ topology, we infer that	
	\begin{align}
	\label{eq:estimAtheta0}
	\abs{\mathcal A\psi (\theta_0)}\leq \widetilde C \norm{\psi}_{C^3(\mathbb S^1)}\qquad\forall\psi\in C^4(\mathbb S^1).
	\end{align}
The multiplier operator $\mathcal A$ commutes with translations of the variable  and this automatically turns estimate \eqref{eq:estimAtheta0} at a fixed $\theta_0$ into an estimate at any $\theta\in\R$. Explicitly, for any $\psi\in C^4(\mathbb S^1)$ and $\varpi\in \R$ we may apply \eqref{eq:estimAtheta0} to the translated function $\psi^\varpi=\psi(\cdot +\varpi)$. Since $\mathcal A\psi^\varpi =\mathcal A \psi(\cdot +\varpi)$ (as can be checked on Fourier modes, see e.g. \cite[\S~3.6.1]{grafakosclassical}), we obtain $\abs{\mathcal A\psi(\theta_0+\varpi)}\leq \widetilde C \norm{\psi}_{C^3(\mathbb S^1)}$ for any $\psi$ and $\varpi$ and deduce
\begin{align}\label{eq:estimA}
\norm{\mathcal A\psi }_{L^\infty(\mathbb S^1)}\leq \widetilde C \norm{\psi}_{C^3(\mathbb S^1)}\qquad\forall\psi\in C^4(\mathbb S^1).
\end{align}
Splitting $\mathcal A=\frac 12 \mathcal A_0 +\frac 12 \mathcal A_1$ where
\begin{align*}
\mathcal A_0\psi_k =\abs{k}^3\psi_k,\qquad \text{and}\quad\mathcal A_1\psi_k =(2-2k^2-\abs{k})\psi_k,
\end{align*} 
we see (arguing as in Remark \ref{r:A}) that $\mathcal A_1$ defines a continuous linear operator from $C^3(\mathbb S^1)$ to $C^0(\mathbb S^1)$, and therefore \eqref{eq:estimA} implies that 
\begin{align}
\label{eq:estimA0}
\norm{\mathcal A_0\psi }_{L^\infty(\mathbb S^1)}\lesssim \norm{\psi}_{C^3(\mathbb S^1)}\qquad\forall \psi\in C^4(\mathbb S^1).
\end{align}
Remarking that
$\mathcal A_0\psi=-\mathcal H \psi^{(3)}$, where $\mathcal H$ is the Hilbert transform on the circle (or conjugate function transform, see \cite[\S~3.5]{grafakosclassical}), that is, the Fourier multiplier operator given by
\begin{align*}
\mathcal H\psi_0=0,\quad \mathcal H \psi_k =-i\,\mathrm{sign}(k)\psi_k\qquad\forall k\in\mathbb Z\setminus\lbrace 0\rbrace,
\end{align*}
we obtain the estimate
\begin{align*}
\norm{\mathcal H \psi^{(3)}}_{L^\infty(\mathbb S^1)}\overset{\eqref{eq:estimA0}}{\lesssim} \norm{\psi}_{C^3(\mathbb S^1)}\qquad\forall \psi\in C^4(\mathbb S^1),
\end{align*}
which readily turns into
\begin{align*}
\norm{\mathcal H\psi}_{L^\infty(\mathbb S^1)}\lesssim\norm{\psi}_{C^0(\mathbb S^1)}\qquad\forall \psi\in C^1(\mathbb S^1).
\end{align*}
Since $\mathcal H\colon C^0(\mathbb S^1)\to L^2(\mathbb S^1)$ is continuous and $C^1(\mathbb S^1)$ is dense in $C^0(\mathbb S^1)$, this estimate implies that $\mathcal H$ maps in fact $C^0(\mathbb S^1)$ continuously into $L^\infty(\mathbb S^1)$, which is notoriously wrong (see e.g. \cite[\S~VII.2]{zygmund}, the function $\psi(\theta)=\sum_{k\geq 2}\sin(k\theta)/(k\log k)$ is continuous but its Hilbert transform $\mathcal H\psi(\theta)=-\sum_{k\geq 2}\cos(k\theta)/(k\log k)$ is not bounded).

This concludes the proof that $\dv m=0$ a.e. Now $m$ indeed satisfies the assumptions of \cite[Theorem~3]{LP}, which gives the desired rigidity for $m$.
\end{proof}

The proof of Theorem \ref{TT1} relies on  the correspondence between solutions of the differential inclusion $DF\in K$ a.e. and unit vector fields $m$ satisfying the assumptions of Theorem \ref{TT0}.

\begin{proof}[Proof of Theorem \ref{TT1}] Let $F\colon \Omega\to \R^2$ be a Lipschitz map  such that $DF\in K$ a.e., and we define $m:\Omega\rightarrow\R^2$ by
		\begin{equation}
		\label{eq270}
m_1=-F_{1,1}-F_{2,2},\quad  m_2=F_{1,2}-F_{2,1}.
		\end{equation}
		For $x\in\Omega$ such that $DF(x)=P(\theta)\in K$, where recall that $P\colon\R\to K\subset\R^{2\times 2}$ is the parameterization of the set $K$ defined in \eqref{eq1}, it is clear from \eqref{eq270} and \eqref{eq1} that $m=e^{i\theta}$. Further 
		\begin{equation}
		\label{xxeqa28}
		i\Sigma_j(m(x))=\nabla F_j(x)\qquad\text{ for }j=1,2,
		\end{equation}
		where $\Sigma_1,\Sigma_2$ are the Jin-Kohn entropies defined in \eqref{eqx10}.
		Thus the vector field $m$ defined by \eqref{eq270} satisfies
	\begin{equation*}
|m(x)|=1 \text{ a.e.}\quad\text{and}\quad \dv\lt(\Sigma_j(m)\rt)\overset{\eqref{xxeqa28}}{=}\mathrm{curl} \lt(\na F_j\rt)=0\text{ for }j=1,2.
	\end{equation*}
By Theorem \ref{TT0}, $m$ is locally  Lipschitz  outside a locally finite set of points $S$. From \eqref{xxeqa28} we deduce that $DF$ agrees almost everywhere with a map $G$ that is locally  Lipschitz outside of $S$, and therefore $DF$ itself is locally  Lipschitz outside of $S$ (indeed outside of $S$ the map $G$ is locally the gradient of a function with Lipschitz derivatives, which has to agree with $F$ up to a constant). Moreover in any convex neighborhood of a point in $S$, $m$ is a vortex, which translates into \eqref{eq210}.
\end{proof}

Finally, Theorem \ref{TT2} is a reformulation of Theorem \ref{TT1} by identifying $v=F_1+iF_2$ as in the proof of Theorem 5 in \cite{LP}. Hence $v$ and $F$ have the same regularity.

\appendix

\section{A Commutator estimate}\label{a:commut}

In this appendix we prove the following basic commutator estimate:
\begin{lem}
	\label{L2.5}
	Given $\Pi\in C^{2}(\R^2)$ and $m\colon\Omega\to\R^2$ with $\abs{m}\leq R$ a.e. for some $0<R<\infty$, we have
	\begin{equation}\label{eqg2}
	|\lt[\Pi(m)\rt]_{\ep}(x)-\Pi(m_{\ep}(x))| \lesssim 
	\norm{D^2\Pi}_{L^\infty(\overline B_R)} \; 
	\Xint{-}_{B_{\ep}(0)}|D^z m(x)|^2\,dz.
	\end{equation}
\end{lem}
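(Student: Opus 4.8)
The plan is to run a classical commutator argument in its pointwise (rather than $L^p$) form. Throughout I would fix $x\in\Omega$ with $\overline B_\ep(x)\subset\Omega$, which is the natural assumption ensuring that $m_\ep(x)=\int m(x-z)\rho_\ep(z)\,dz$ and $[\Pi(m)]_\ep(x)$ make sense and that every finite difference $D^{\pm z}m(x)$ with $\abs z\le\ep$ is a genuine difference $m(x\pm z)-m(x)$. Since $\rho_\ep\ge 0$ has unit integral and $\abs m\le R$ a.e., Jensen's inequality gives $\abs{m_\ep(x)}\le R$, so only the values of $\Pi$ on the \emph{convex} set $\overline B_R$ will enter the estimate, which is precisely why the right-hand side of \eqref{eqg2} features $\norm{D^2\Pi}_{L^\infty(\overline B_R)}$.

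First I would use $\int\rho_\ep=1$ to write
\begin{equation*}
[\Pi(m)]_\ep(x)-\Pi(m_\ep(x))=\int_{B_\ep(0)}\rho_\ep(z)\bigl(\Pi(m(x-z))-\Pi(m_\ep(x))\bigr)\,dz,
\end{equation*}
and Taylor expand $\Pi$ to first order about $b:=m_\ep(x)$. Since $\overline B_R$ is convex, the integral form of the remainder yields $\bigl|\Pi(a)-\Pi(b)-D\Pi(b)(a-b)\bigr|\le\frac12\norm{D^2\Pi}_{L^\infty(\overline B_R)}\abs{a-b}^2$ for all $a,b\in\overline B_R$. The decisive point is that the first-order term integrates to zero:
\begin{equation*}
\int_{B_\ep(0)}\rho_\ep(z)\,D\Pi(b)\bigl(m(x-z)-m_\ep(x)\bigr)\,dz=D\Pi(b)\bigl(m_\ep(x)-m_\ep(x)\bigr)=0.
\end{equation*}
Hence
\begin{equation*}
\bigl|[\Pi(m)]_\ep(x)-\Pi(m_\ep(x))\bigr|\lesssim\norm{D^2\Pi}_{L^\infty(\overline B_R)}\int_{B_\ep(0)}\rho_\ep(z)\abs{m(x-z)-m_\ep(x)}^2\,dz.
\end{equation*}

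It then remains to bound $\int_{B_\ep(0)}\rho_\ep(z)\abs{m(x-z)-m_\ep(x)}^2\,dz$ by $\Xint{-}_{B_\ep(0)}\abs{D^zm(x)}^2\,dz$. For this I would use $\int\rho_\ep=1$ once more, together with the splitting $m(x-z)-m(x-w)=D^{-z}m(x)-D^{-w}m(x)$, to write
\begin{equation*}
m(x-z)-m_\ep(x)=\int_{B_\ep(0)}\rho_\ep(w)\bigl(D^{-z}m(x)-D^{-w}m(x)\bigr)\,dw,
\end{equation*}
and then the triangle and Jensen inequalities give $\abs{m(x-z)-m_\ep(x)}^2\lesssim\abs{D^{-z}m(x)}^2+\int_{B_\ep(0)}\rho_\ep(w)\abs{D^{-w}m(x)}^2\,dw$. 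Integrating this against $\rho_\ep(z)\,dz$ collapses both terms to $\int_{B_\ep(0)}\rho_\ep(z)\abs{D^{-z}m(x)}^2\,dz$, and finally the crude bound $\norm{\rho_\ep}_{L^\infty}\lesssim\ep^{-2}$ combined with the symmetry $z\mapsto-z$ of $B_\ep(0)$ turns this into $\Xint{-}_{B_\ep(0)}\abs{D^zm(x)}^2\,dz$. Chaining this with the previous display gives \eqref{eqg2}.

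The only genuinely substantive ingredient is the exact cancellation of the first-order Taylor term — this is the heart of every commutator estimate of this type. The reduction of $\abs{m(x-z)-m_\ep(x)}^2$ to the finite differences $\abs{D^zm(x)}^2$ is a routine double averaging, and $\int_{B_\ep(0)}\rho_\ep(z)\abs{D^{-z}m(x)}^2\,dz\lesssim\Xint{-}_{B_\ep(0)}\abs{D^zm(x)}^2\,dz$ is merely the boundedness of the standard mollifier; I do not foresee any real difficulty beyond carefully tracking the implied constants and the harmless reflection $z\mapsto-z$.
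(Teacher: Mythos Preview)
Your proof is correct and follows essentially the same route as the paper's: write the commutator as $\int\rho_\ep(z)\bigl(\Pi(m(x-z))-\Pi(m_\ep(x))\bigr)\,dz$, use a second-order Taylor bound to reduce to $\int\rho_\ep(z)\abs{m(x-z)-m_\ep(x)}^2\,dz$, then control this by the averaged finite differences via the same double-averaging argument. The only cosmetic difference is that you Taylor expand about $m_\ep(x)$, so the first-order term cancels in one line, whereas the paper expands about $m(x-z)$ and then subtracts off $D\Pi(m_\ep(x))$ to achieve the same cancellation, producing two remainder terms instead of one; your version is marginally cleaner but the content is identical.
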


\begin{proof}[Proof of Lemma~\ref{L2.5}]
	The proof follows computations presented in \cite{titi} and recently in a context closer to ours also in \cite{DeI}.
	We write out
	\begin{align}
	\label{eqqoo1}
	&\lt[\Pi(m)\rt]_{\ep}(x) -\Pi(m_{\ep}(x))\nn\\
	& = \int \lt(\Pi(m(x-z))-\Pi(m_{\ep}(x))\rt)\rho_{\ep}(z)\,dz\nn\\
	&=\int D\Pi(m(x-z))\cdot\lt(m(x-z)-m_{\ep}(x)\rt)\rho_{\ep}(z)\,dz\nn\\
	&+\int \lt(\Pi(m(x-z))-\Pi(m_{\ep}(x)) - D\Pi(m(x-z))\cdot\lt(m(x-z)-m_{\ep}(x)\rt)\rt)\rho_{\ep}(z)\,dz\nn\\
	&=\int \lt(D\Pi(m(x-z))-D\Pi(m_{\ep}(x))\rt)\cdot\lt(m(x-z)-m_{\ep}(x)\rt)\rho_{\ep}(z)\,dz\nn\\
	&+\int \lt(\Pi(m(x-z))-\Pi(m_{\ep}(x)) - D\Pi(m(x-z))\cdot\lt(m(x-z)-m_{\ep}(x)\rt)\rt)\rho_{\ep}(z)\,dz.
	\end{align}
	By Taylor expansion we have
	\begin{align}
	&|\Pi(m(x-z))-\Pi(m_{\ep}(x)) - D\Pi(m(x-z))\cdot\lt(m(x-z)-m_{\ep}(x)\rt)| \nn\\
	&\qd\qd \lesssim \norm{D^2\Pi}_{L^\infty}|m(x-z)-m_{\ep}(x)|^2, \nn\\
	&\lt|\lt(D\Pi(m(x-z))-D\Pi(m_{\ep}(x))\rt)\cdot\lt(m(x-z)-m_{\ep}(x)\rt) \rt|\nn\\
	&\qd\qd\lesssim \norm{D^2\Pi}_{L^\infty} \lt|m(x-z)-m_{\ep}(x)\rt|^2, \nn
	\end{align}
	and plugging this into \eqref{eqqoo1},
	\begin{equation}\label{eqg3}
	|\lt[\Pi(m)\rt]_{\ep}(x)-\Pi(m_{\ep}(x))|\lesssim \norm{D^2\Pi}_{L^\infty}\int_{B_{\ep}(0)}|m(x-z)-m_{\ep}(x)|^2\rho_{\ep}(z)\,dz.
	\end{equation}
	Moreover by Jensen's inequality we have
	\begin{align*}
	&\int_{B_{\ep}(0)}|m(x-z)-m_{\ep}(x)|^2\rho_{\ep}(z)\,dz \\
	& = \int_{B_{\ep}(0)}\left\vert \int_{B_\e(0)} (m(x-z)-m(x-y))\rho_\e(y) dy\right\vert^2\rho_{\ep}(z)\,dz \\
	&\lesssim \int_{B_\e(0)}\int_{B_\e(0)}\abs{m(x-z)-m(x-y)}^2\rho_\e(y)\rho_\e(z)\, dydz\\
	& = \int_{B_\e(0)}\int_{B_\e(0)}\abs{D^{-z}m(x)-D^{-y}m(x)}^2\rho_\e(y)\rho_\e(z)\, dydz \\
	&\lesssim \Xint{-}_{\B_\e(0)}\abs{D^{-z}m(x)}^2\,dz + \Xint{-}_{\B_\e(0)}\abs{D^{-y}m(x)}^2\,dy \\
	&\lesssim \Xint{-}_{\B_\e(0)}\abs{D^{z}m(x)}^2\,dz.
	\end{align*}
	Plugging this estimate into \eqref{eqg3} gives \eqref{eqg2}.
\end{proof}

\section{Computations needed in the proof of Lemma~\ref{l:compentharm}}
\label{a:compdivPhismooth}

\begin{lem}\label{l:compdivPhismooth}
	Let $\varphi\in C^3(\overline B_1)$ such that $\Delta\varphi=0$ in $B_1$ and $\Phi^{\varphi}$ the corresponding harmonic entropy given by
	\begin{equation}\label{eq:harmPhi}
\Phi^{\varphi}(z)=\varphi(z)z+((iz)\cdot \nabla\varphi(z))iz\qquad\forall z\in\overline B_1.
	\end{equation}
	For any \underline{smooth} map $w\colon \Omega\to \overline B_1$ we have
	\begin{align*}
	\dv\Phi^{\varphi}(w)&=A(w)\dv w + \dv((\abs{w}^2-1)B(w)) \\
	&\quad + \partial_2 B_1(w)\dv\Sigma_1(w) -\partial_1 B_1(w) \dv \Sigma_2(w),
	\end{align*}
	where $A=A^\varphi\colon\overline B_1\to \R$ and $B=B^\varphi\colon \overline B_1\to\R^2$  are given by
	\begin{align}
	A^\varphi(z)&=\varphi(z) -z_1\partial_1\varphi(z) - z_2\partial_2\varphi(z) \nn\\
	&\quad + z_1z_2 \Big[ \partial_{12}\varphi(z)- z_2\partial_{111}\varphi(z) + z_1\partial_{211}\varphi(z)\Big] \nn\\
	&\quad +\frac 12 (z_1^2-z_2^2)\Big[ 
	\partial_{11}\varphi(z) + z_2\partial_{112}\varphi(z) + z_1\partial_{111}\varphi(z)
	\Big], \label{eq:Avarphi}\\
	B^\varphi(z)&=\left(
	\begin{array}{c}
	\partial_1\varphi(z) +\frac 12 z_2\partial_{12}\varphi(z)-\frac 12 z_1\partial_{22}\varphi(z)\\
	\partial_2\varphi(z) -\frac 12 z_2\partial_{11}\varphi(z)+\frac 12 z_1\partial_{12}\varphi(z)
	\end{array}
	\right).\label{eq:Bvarphi}
	\end{align}
\end{lem}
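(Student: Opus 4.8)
The identity is purely algebraic — it involves only the values of $w$ and its first derivatives — so the plan is to reduce it to an identity between coefficient functions and then verify that directly, the harmonicity of $\varphi$ entering at one precise point. For a smooth map $w\colon\Omega\to\overline B_1$ the chain rule turns both sides into expressions of the form $\sum_{j,l=1}^{2}c_{jl}(w)\,\partial_j w_l$ with coefficients $c_{jl}$ depending only on the value of $w$: on the left $\dv[\Phi^{\varphi}(w)]=\sum_{j,l}(\partial_l\Phi^{\varphi}_j)(w)\,\partial_j w_l$; on the right $\dv((|w|^2-1)B^{\varphi}(w))=\na(|w|^2)\cdot B^{\varphi}(w)+(|w|^2-1)\sum_{j,l}(\partial_l B^{\varphi}_j)(w)\,\partial_j w_l$, while $\dv\Sigma_j(w)$ is expanded using the explicit identities \eqref{eqx11}--\eqref{eqx12}. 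Thus the lemma is equivalent to four scalar identities between the corresponding coefficient functions of $z\in\overline B_1$. Since $\varphi\in C^3(\overline B_1)$, all derivatives that occur below (up to order three) are continuous, so these identities — and the asserted continuity of $A^{\varphi}$ and $B^{\varphi}$ — make sense.

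First I would compute the Jacobian $D\Phi^{\varphi}$. Writing $\Phi^{\varphi}(z)=\varphi(z)z+g(z)(iz)$ with $g(z)=iz\cdot\na\varphi(z)=-z_2\varphi_1+z_1\varphi_2$, the product rule gives $D\Phi^{\varphi}=z\otimes\na\varphi+\varphi\,\mathrm{Id}+(iz)\otimes\na g+g\,J$, where $J$ is the rotation by $\pi/2$; substituting $\na g=(\varphi_2-z_2\varphi_{11}+z_1\varphi_{12},\ -\varphi_1-z_2\varphi_{12}+z_1\varphi_{22})$ yields each $\partial_l\Phi^{\varphi}_j(z)$ explicitly, involving $\varphi$ and its derivatives up to order two only.

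Next I would simplify the right-hand side, and this is where $\Delta\varphi=0$ is used. From the explicit form \eqref{eq:Bvarphi} one computes $\dv_z B^{\varphi}=\tfrac12\Delta\varphi$ and $\mathrm{curl}_z B^{\varphi}=\partial_1 B^{\varphi}_2-\partial_2 B^{\varphi}_1=\tfrac12\,(iz)\cdot\na(\Delta\varphi)$, both of which vanish. Substituting \eqref{eqx11}--\eqref{eqx12} into $\partial_2 B^{\varphi}_1(w)\,\dv\Sigma_1(w)-\partial_1 B^{\varphi}_1(w)\,\dv\Sigma_2(w)$ and combining with the $(|w|^2-1)$-part of $\dv((|w|^2-1)B^{\varphi}(w))$, one finds that all terms carrying the factor $(|w|^2-1)$ collapse exactly into $(|w|^2-1)\big[(\mathrm{curl}_z B^{\varphi})(w)\,\partial_2 w_1+(\dv_z B^{\varphi})(w)\,\partial_2 w_2\big]=0$. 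Hence, as a linear form in $Dw$ with $w$-dependent coefficients, the right-hand side reduces to
\[
\big[A^{\varphi}(w)-2w_1w_2\,(\partial_2 B^{\varphi}_1)(w)-(w_1^2-w_2^2)\,(\partial_1 B^{\varphi}_1)(w)\big]\dv w\ +\ \na(|w|^2)\cdot B^{\varphi}(w).
\]
Comparing coefficients with $D\Phi^{\varphi}$, the off-diagonal identities $\partial_2\Phi^{\varphi}_1(z)=2z_2 B^{\varphi}_1(z)$ and $\partial_1\Phi^{\varphi}_2(z)=2z_1 B^{\varphi}_2(z)$ hold by direct substitution (no harmonicity needed), while the diagonal ones read $\partial_1\Phi^{\varphi}_1(z)=A^{\varphi}(z)-2z_1z_2(\partial_2 B^{\varphi}_1)(z)-(z_1^2-z_2^2)(\partial_1 B^{\varphi}_1)(z)+2z_1 B^{\varphi}_1(z)$ and its analogue with the roles of the two variables exchanged; these I would check from the explicit formula \eqref{eq:Avarphi}.

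The main obstacle is not conceptual but organizational: the expansion is long, and one must keep track of the many third-order derivatives of $\varphi$ — which appear in $A^{\varphi}$ and in $\partial_j B^{\varphi}_1$ but \emph{not} in $D\Phi^{\varphi}$, hence are forced to cancel — and apply the harmonicity relations $\varphi_{22}=-\varphi_{11}$, $\varphi_{122}=-\varphi_{111}$, $\varphi_{112}=-\varphi_{222}$ (that is, $\Delta\varphi=0$ together with $\na\Delta\varphi=0$) consistently, so that both the $(|w|^2-1)$-terms and the third-order $\varphi$-terms vanish exactly as they must; once this bookkeeping is carried out carefully, the surviving second-order terms match on the nose.
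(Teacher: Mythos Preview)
Your proposal is correct and follows essentially the same route as the paper: a direct algebraic expansion where the key step is that harmonicity forces $\dv_z B^{\varphi}=\tfrac12\Delta\varphi=0$ and $\partial_1 B^{\varphi}_2-\partial_2 B^{\varphi}_1=\tfrac12(iz)\cdot\nabla(\Delta\varphi)=0$, killing the $(|w|^2-1)$ contributions. The only organizational difference is that the paper computes $\dv\Phi^{\varphi}(w)$ forward and rearranges it into the claimed form (so that $A^{\varphi}$ and $B^{\varphi}$ emerge from the calculation), whereas you take $A^{\varphi},B^{\varphi}$ as given and verify by matching the four coefficients of $\partial_j w_l$; the algebraic content is identical.
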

\begin{proof}[Proof of Lemma~\ref{l:compdivPhismooth}]
	We have
	\begin{equation*}
\Phi^{\varphi}(w)\overset{\eqref{eq:harmPhi}}{=}\lt(\begin{array}{c}  w_1 \varphi -w_2 \lt(-w_2 \partial_1 \varphi+w_1  \partial_2 \varphi  \rt)\\
	w_2 \varphi +w_1 \lt(-w_2 \partial_1 \varphi+w_1  \partial_2 \varphi  \rt)
	\end{array}\rt).
	\end{equation*}
	So 
	\begin{align}
	\label{eqx4.25}
	&\dv \Phi^{\varphi}(w)\nn\\
	&\qd= w_1 \partial_1 \varphi\partial_1 w_1+  w_1 \partial_2 \varphi\partial_1 w_2+\varphi\partial_1 w_1\nn\\
	&\qd\qd -\lt(-w_2  \partial_1 \varphi+ w_1 \partial_2 \varphi \rt) \partial_1 w_2\nn\\
	&\qd\qd -w_2\lt( -\partial_1 \varphi \partial_1 w_2-  w_2 \partial_{11} \varphi \partial_1 w_1- w_2 \partial_{12} \varphi \partial_1 w_2\rt.\nn\\
	&\qd\qd\qd\qd\qd  \lt.+\partial_2 \varphi \partial_1 w_1+  w_1 \partial_{12} \varphi \partial_1 w_1+ w_1 \partial_{22} \varphi \partial_1 w_2\rt)\nn\\
	&\qd\qd+ w_2 \partial_1 \varphi \partial_2 w_1+ w_2 \partial_2 \varphi \partial_2 w_2+\varphi \partial_2 w_2\nn\\
	&\qd\qd +\lt(-w_2  \partial_1 \varphi+ w_1 \partial_2 \varphi \rt) \partial_2 w_1\nn\\
	&\qd\qd + w_1\lt( -\partial_1 \varphi \partial_2 w_2-  w_2 \partial_{11} \varphi \partial_2 w_1- w_2 \partial_{12} \varphi \partial_2 w_2\rt.\nn\\
	&\qd\qd\qd\qd\qd  \lt.+\partial_2 \varphi \partial_2 w_1+ w_1 \partial_{12} \varphi \partial_2 w_1+ w_1 \partial_{22} \varphi \partial_2 w_2\rt)\nn\\
	&\qd= \lt(\varphi+w_1 \partial_1 \varphi- w_2 \partial_2 \varphi+w_2^2  \partial_{11} \varphi- w_1 w_2   \partial_{12} \varphi\rt) \partial_1 w_1\nn\\
	&\qd\qd + \lt(\varphi+w_2 \partial_2 \varphi- w_1 \partial_1 \varphi+w_1^2  \partial_{22} \varphi- w_1 w_2   \partial_{12} \varphi\rt) \partial_2 w_2\nn\\
	&\qd\qd +\lt(w_1 \partial_2 \varphi+ w_2 \partial_1 \varphi-w_1 \partial_2 \varphi+w_2^2  \partial_{12} \varphi- w_1 w_2   \partial_{22} \varphi+w_2 \partial_1 \varphi\rt) \partial_1 w_2\nn\\
	&\qd\qd +\lt(w_2 \partial_1 \varphi- w_2 \partial_1 \varphi+w_1 \partial_2 \varphi-w_1 w_2  \partial_{11} \varphi+ w_1^2   \partial_{12} \varphi+ w_1 \partial_2 \varphi\rt) \partial_2 w_1\nn\\
	&\qd= \lt(\varphi+w_1 \partial_1 \varphi- w_2 \partial_2 \varphi+w_2^2  \partial_{11} \varphi- w_1 w_2   \partial_{12} \varphi\rt) \partial_1 w_1\nn\\
	&\qd\qd + \lt(\varphi+w_2 \partial_2 \varphi- w_1 \partial_1 \varphi+w_1^2  \partial_{22} \varphi- w_1 w_2   \partial_{12} \varphi\rt) \partial_2 w_2\nn\\
	&\qd\qd +\lt(2\partial_1 \varphi+w_2  \partial_{12} \varphi- w_1  \partial_{22} \varphi\rt) w_2 \partial_1 w_2\nn\\
	&\qd\qd +\lt(2\partial_2 \varphi-w_2  \partial_{11} \varphi+ w_1  \partial_{12} \varphi\rt) w_1 \partial_2 w_1.
	\end{align}
	Noting that 
	\begin{equation*}
\partial_1 \lt(\frac{\lt|w\rt|^2}{2}\rt)-w_1 \partial_1 w_1=w_2 \partial_1 w_2\qd\text{ and }\qd\partial_2 \lt(\frac{\lt|w\rt|^2}{2}\rt)-w_{2} \partial_2 w_2=w_1 \partial_2 w_1,
	\end{equation*}
	and plugging this into \eqref{eqx4.25},
	\begin{align}
	\label{eqx7}
	&\dv \Phi^{\varphi}(w)\nn\\
	&\qd=  \lt(\varphi+w_1 \partial_1 \varphi- w_2 \partial_2 \varphi+w_2^2  \partial_{11} \varphi- w_1 w_2   \partial_{12} \varphi\rt.\nn\\
	&\qd\qd\qd\qd \lt. -2 w_1 \partial_1 \varphi-w_1 w_2   \partial_{12} \varphi+w_1^2 \partial_{22} \varphi\rt) \partial_1 w_1\nn\\
	&\qd\qd+  \lt(\varphi+w_2 \partial_2 \varphi- w_1 \partial_1 \varphi+w_1^2  \partial_{22} \varphi- w_1 w_2   \partial_{12} \varphi\rt.\nn\\
	&\qd\qd\qd\qd \lt. -2 w_2 \partial_2 \varphi-w_1 w_2   \partial_{12} \varphi+w_2^2 \partial_{11} \varphi\rt) \partial_2 w_2\nn\\
	&\qd\qd+ \lt(\partial_1 \varphi+\frac{1}{2} w_2 \partial_{12} \varphi-  \frac{1}{2} w_1 \partial_{22} \varphi \rt)\partial_1 \lt(\lt|w\rt|^2\rt)\nn\\
	&\qd\qd+ \lt(\partial_2 \varphi-\frac{1}{2} w_2 \partial_{11} \varphi+  \frac{1}{2} w_1 \partial_{12} \varphi \rt)\partial_2 \lt(\lt|w\rt|^2\rt)\nn\\
	&\qd =\lt(\varphi-w_1 \partial_1 \varphi-w_2 \partial_2 \varphi+w_1^2  \partial_{22} \varphi +w_2^2  \partial_{11} \varphi-2 w_1 w_2\partial_{12} \varphi  \rt) \dv w\nn\\
	&\qd\qd+ \lt(\partial_1 \varphi+\frac{1}{2} w_2 \partial_{12} \varphi-  \frac{1}{2} w_1 \partial_{22} \varphi \rt)\partial_1 \lt(\lt|w\rt|^2\rt)\nn\\
	&\qd\qd+ \lt(\partial_2 \varphi-\frac{1}{2} w_2 \partial_{11} \varphi+  \frac{1}{2} w_1 \partial_{12} \varphi \rt)\partial_2 \lt(\lt|w\rt|^2\rt) \nn\\
	& \qd= C(w)\dv w + B(w)\cdot\nabla(\abs{w}^2),
	\end{align}
	where $B=B^\varphi$ is as in \eqref{eq:Bvarphi} and
	\begin{equation}
	\label{eqx8}
	C(z)=\varphi-z_1 \partial_1 \varphi-z_2 \partial_2 \varphi+z_1^2  \partial_{22} \varphi +z_2^2  \partial_{11} \varphi-2 z_1 z_2\partial_{12} \varphi.
	\end{equation}
	We rewrite \eqref{eqx7} as 
	\begin{align}
	\label{eqx13}
	\dv  \Phi^{\varphi}(w)&=C(w) \dv w+ \dv \lt(\lt(\lt|w\rt|^2-1\rt)B(w)\rt)\nn\\
	&\qd+\lt(\partial_1 B_1 \partial_1 w_1+ \partial_2 B_1 \partial_1 w_2+ \partial_1 B_2 \partial_2 w_1+ \partial_2 B_2 \partial_2 w_2 \rt)\lt( 1-\lt|w\rt|^2\rt).
	\end{align}
	Note that 
	\begin{align}
	\partial_1 B_1&=\partial_{11} \varphi+\frac{1}{2} z_2 \partial_{112} \varphi- \frac{1}{2} \partial_{22}\varphi   -\frac{1}{2} z_{1}\partial_{221} \varphi,\label{eqx15}\\
	\partial_2 B_2&=\partial_{22} \varphi-\frac{1}{2} z_2 \partial_{112} \varphi- \frac{1}{2} \partial_{11} \varphi +\frac{1}{2} z_{1}\partial_{221} \varphi, \nn\\
	\text{so }\quad &
	\partial_1 B_1+\partial_2 B_2=\frac{1}{2}\Delta \varphi,\label{eqx17}
	\end{align}
	and
	\begin{align}
	\partial_2 B_1 &=\frac{3}{2}\partial_{12} \varphi+\frac{1}{2}z_2 \partial_{122} \varphi-\frac{1}{2}z_1 \partial_{222} \varphi, \label{eqx20}\\
	\partial_1 B_2&=\frac{3}{2}\partial_{12} \varphi-\frac{1}{2}z_2 \partial_{111} \varphi
	+\frac{1}{2}z_1 \partial_{112} \varphi,\nn\\
	\text{so }\quad &
	\partial_2 B_1-\partial_1 B_2=\frac{1}{2}z_2\lt(\partial_1 \Delta \varphi\rt)-\frac{1}{2}z_1\lt(\partial_2 \Delta \varphi\rt).\label{eqx24}
	\end{align}
	Since $\varphi$ is harmonic we have from \eqref{eqx17} and \eqref{eqx24} that 
	\begin{equation}
	\label{eqx23}
	\partial_1 B_1+\partial_2 B_2=0\qd\text{ and }\qd \partial_2 B_1-\partial_1 B_2=0.
	\end{equation}
	Recalling moreover the explicit expressions of $\dv\Sigma_j(w)$ computed in \eqref{eqx11}-\eqref{eqx12},
	we find that
	\eqref{eqx13} can be rewritten as 
	\begin{align}
	\dv \Phi^{\varphi}(w)&\overset{\eqref{eqx23}, \eqref{eqx13}}{=} C(w) \dv w+ \dv\lt(\lt(\lt|w\rt|^2-1\rt)B(w)\rt)\nn\\
	&\qd +\partial_1 B_1\lt(\partial_1 w_1-\partial_2 w_2\rt)\lt(1-\lt|w\rt|^2\rt)\nn\\
	&\qd+\partial_2 B_1\lt(\partial_1 w_2+\partial_2 w_1\rt)\lt(1-\lt|w\rt|^2\rt)\nn\\
	&\overset{\eqref{eqx11}-\eqref{eqx12}}{=}C(w) \dv w+ \dv\lt(\lt(\lt|w\rt|^2-1\rt)B(w)\rt)\nn\\
	&\qd-\partial_1 B_1\lt(\dv \Sigma_2(w)-(w_1^2-w_2^2)\dv w\rt)\nn\\
	&\qd+\partial_2 B_1\lt(\dv \Sigma_1(w)+2w_1 w_2\dv w\rt)\nn\\
	&=A(w)\dv w+\dv\lt(\lt(\lt|w\rt|^2-1\rt)B\rt)\nn\\
	&\quad +\partial_2 B_1 \dv \Sigma_1(w)-\partial_1 B_1 \dv \Sigma_2(w), \label{eqx25}
	\end{align}
	where 
	\begin{align*}
	A(w)&=C(w)+2\partial_2 B_1 w_1 w_2+\partial_1 B_1 \lt(w_1^2-w_2^2\rt)\nn\\
	&\overset{\eqref{eqx8}, \eqref{eqx20}, \eqref{eqx15}}{=}\varphi-w_1 \partial_1 \varphi-w_2 \partial_2 \varphi+w_1^2  \partial_{22} \varphi +w_2^2  \partial_{11} \varphi-2 w_1 w_2\partial_{12}\varphi\nn\\
	&\qd +2w_1 w_2\lt(\frac{3}{2}\partial_{12} \varphi+\frac{1}{2}w_2 \partial_{122} \varphi-\frac{1}{2}w_1 \partial_{222} \varphi\rt)\nn\\
	&\qd+\lt(w_1^2-w_2^2\rt)\lt(\partial_{11} \varphi+\frac{1}{2} w_2 \partial_{112} \varphi- \frac{1}{2} \partial_{22}\varphi   -\frac{1}{2} w_{1}\partial_{221} \varphi \rt)\nn\\
	&=\varphi-w_1 \partial_1 \varphi-w_2 \partial_2 \varphi\nn\\
	&\qd +2w_1 w_2\lt(\frac{1}{2}\partial_{12} \varphi+\frac{1}{2}w_2 \partial_{122} \varphi-\frac{1}{2}w_1 \partial_{222} \varphi\rt)\nn\\
	&\qd+\lt(w_1^2-w_2^2\rt)\lt(\frac{1}{2}\partial_{11} \varphi+\frac{1}{2} w_2 \partial_{112} \varphi  -\frac{1}{2} w_{1}\partial_{221} \varphi \rt).
	\end{align*}
	This expression agrees with \eqref{eq:Avarphi} because $\Delta\varphi=0$, so \eqref{eqx25} proves Lemma~\ref{l:compdivPhismooth}.
\end{proof}

\bibliographystyle{alpha}
\bibliography{aviles_giga}

\end{document}